\newcommand{\redsout}{\bgroup\markoverwith{\textcolor{red}{\rule[0.5ex]{2pt}{.4pt}}}\ULon}
\newcommand{\LV}{\left|}
\newcommand{\RV}{\right|}
\newcommand{\LC}{\left(}
\newcommand{\RC}{\right)}
\newcommand{\LA}{\left<}
\newcommand{\RA}{\right>}
\newcommand{\p}{\partial}
\numberwithin{equation}{section}
\newtheorem{theorem}{Theorem}[section]
\newtheorem{corollary}[theorem]{Corollary}
\newtheorem{proposition}[theorem]{Proposition}
\newtheorem{lemma}[theorem]{Lemma}
\newtheorem{definition}{Definition}[section]
\newtheorem{hypothesis}[definition]{Hypothesis}
\newtheorem{remark}{Remark}[section]
\newcommand{\R}{\mathbb R}
\definecolor{mycolor}{rgb}{0.122, 0.435, 0.698}
\definecolor{aliceblue}{rgb}{0.94, 0.97, 1.0}
\newmdenv[innerlinewidth=0.5pt, roundcorner=4pt,linecolor=mycolor,innerleftmargin=6pt,
innerrightmargin=6pt,innertopmargin=6pt,innerbottommargin=6pt]{mybox}
\newmdenv[backgroundcolor=aliceblue,innerlinewidth=0.5pt, roundcorner=4pt,linecolor=mycolor,innerleftmargin=6pt,
innerrightmargin=6pt,innertopmargin=6pt,innerbottommargin=6pt]{mybox1}
\author[Lai]{Ru-Yu Lai}
\address{School of Mathematics, University of Minnesota, Minneapolis, MN 55455, USA}
\email{rylai@umn.edu}
\author[Zhou]{Hanming Zhou}
\address{Department of Mathematics, University of California Santa Barbara, Santa Barbara, CA 93106, USA}
\email{hzhou@math.ucsb.edu}
\thanks{\textbf{Key words}: Nonlinear transport equation, inverse problems, time-dependent coefficient}
\title[Inverse problems for nonlinear transport equations]
{Inverse problems for time-dependent nonlinear transport equations}
\begin{document}
\setstcolor{red}

\begin{abstract}
In this work, we investigate inverse problems of recovering the time-dependent coefficient in the nonlinear transport equation in both cases: two-dimensional Riemannian manifolds and Euclidean space $\R^n$, $n\geq 2$. Specifically, it is shown that its initial boundary value problem is well-posed for small initial and incoming data. Moreover, the time-dependent coefficient appearing in the nonlinear term can be uniquely determined from boundary measurements as well as initial and final data. To achieve this, the central techniques we utilize include the linearization technique and the construction of special geometrical optics solutions for the linear transport equation. This allows us to reduce the inverse coefficient problem to the inversion of certain weighted light ray transforms.
Based on the developed methodology, the inverse source problem for the nonlinear transport equation in the scattering-free media is also studied.
\end{abstract}

\maketitle

\section{Introduction}
We study inverse problems for the transport equation with nonlinear absorption in both the Euclidean and Riemannian settings.  
Let $M$ be the interior of a compact non-trapping Riemannian manifold $(\overline M, g)$, of dimension $n\geq 2$, with smooth strictly convex (with respect to the metric $g$) boundary $\p M$.  
Let $TM$ be the tangent bundle of $M$. We denote the unit sphere bundle of the manifold $(M,g)$ by  
$$
SM :=\{(x,v)\in TM:\, |v|^2_{g(x)}:=\left< v,v\right>_{g(x)}=1\},
$$
where $\langle \cdot\,,\,\cdot \rangle_{g(x)}$ is the inner product on the tangent space $T_xM$.
Let $\p_+ S M$ and $\p_- SM$ be the outgoing and incoming boundaries of $SM$, respectively, and they are defined by
\begin{align*}
	\p_\pm SM := \{(x,v)\in S\overline{M}:\, x\in \p M,\, \pm \langle \nu(x), v\rangle_{g(x)} > 0\},
\end{align*}
where $\nu(x)$ is the unit outer normal vector at $x\in\p M$.
For any point $x\in M$, let $$S_xM:=\{v:\, (x,v)\in SM\}.$$ Moreover, we denote 
$$S M^2:=\{(x,v, v'): x\in M,\; v,\, v'\in S_xM \}.$$
Let $T>0$, we also denote  
$$
    M_T:=(0,T)\times M,\quad SM_T:=(0,T)\times SM\quad\hbox{and}\quad \p_\pm SM_T:=(0,T)\times \p_\pm SM.
$$

Let $X$ be the geodesic vector field which generates the geodesic flow on $SM$. In particular, $X=v\cdot\nabla$ is the directional derivative with respect to the $x$-variable in the Euclidean case. We consider the following initial boundary value problem for the nonlinear transport equation with a power-type nonlinear term:
\begin{align}\label{EQN: RTE equ}
	\left\{\begin{array}{rcll}
		\p_tf + X f  + \sigma f +q f^m &=& K(f) & \hbox{in } SM_T,  \\
		f &=&h_0  & \hbox{on } \{0\}\times SM,\\
		f &=&h_- & \hbox{on }\p_-SM_T,
	\end{array}\right.
\end{align}
where $m\geq 2$, $f\equiv f(t,x,v)$ is the solution, $\sigma\equiv \sigma(x,v)$ is the real-valued absorption coefficient, $q\equiv q(t,x)$ is the real-valued nonlinear coefficient and $K$ is the scattering operator, defined by
$$
    K(f)(t,x,v) := \int_{S_xM} \mu(x,v,v') f(t,x,v')\,dv'
$$
with the real-valued scattering coefficient $\mu$.

We define the set  
\begin{equation}\label{set M}
    \begin{split}
    \Omega:=\Big\{&(\sigma,\mu)\in L^\infty(SM)\times L^\infty(SM^2): 0\leq  \sigma(x,v) \leq \sigma_0, \quad 0 \leq \mu(x,v,v') \leq \mu_0,  \\  
    &\quad  
     \int_{S_xM} \mu(x,v,v') dv'\leq \sigma(x,v), \quad  \int_{S_xM} \mu(x,v',v) dv'\leq \sigma(x,v)\Big\}
     \end{split}
\end{equation}
for some positive constants $\sigma_0$ and $\mu_0$. These conditions are required in the proof of the well-posedness for the nonlinear transport equation in Section~\ref{sec:forward}.   
More explicitly, Theorem~\ref{THM:WELL} shows that there exists a constant $\delta>0$ such that given any \begin{equation}\label{DEF:set X}
	\begin{split}
	(h_0, h_-) \in \mathcal{X}_\delta:=  \{(h_0, h_-)\in L^\infty(SM)\times L^\infty(\p_-SM_T):\, \|h_0\|_{L^\infty(SM)}\leq \delta, \quad \|h_-\|_{L^\infty(\p_-SM_T)}\leq \delta\},
	\end{split}
\end{equation}
the problem \eqref{EQN: RTE equ} has a unique solution $f\in L^\infty(SM_T)$. Therefore, the measurement operator $\mathcal{A}_q:\mathcal{X}_\delta \rightarrow L^\infty(SM)\times L^\infty(\p_+SM_T)$, defined by
\begin{align*}
\mathcal{A}_q(h_0,h_-) =  (f|_{t=T},f|_{\p_+SM_T}),  
\end{align*}
is well-defined for such  $(h_0,h_-)\in \mathcal{X}_\delta$.

The main goal of this paper is to recover the nonlinear coefficient $q$, depending on both $t$ and $x$ variables, from the operator $\mathcal{A}_q$.  The choice of such measurement operator $\mathcal{A}_q$ is based on \cite{bellassouedInverseProblemLinear2019}, which shows that it is impossible
to recover the absorption over the whole domain with trivial initial data, that is, $f(0, x, v) \equiv 0$. In
other words, without the knowledge of the initial data, the coefficient in the cloaking region
can not be uniquely determined by solely using boundary measurements. 

Driven by its wide applications, there have been fruitful results for inverse problems for the transport type equations. We first review some relevant studies.
In the absence of the nonlinear term (i.e., $q\equiv 0$), the equation \eqref{EQN: RTE equ} is known as the linear Boltzmann equation or radiative transfer equation (RTE). Its corresponding inverse problem is sought to recover both absorption and scattering coefficients in the RTE from the albedo operator $\mathcal{A}_{\sigma,\mu}:f|_{\p_-SM_T}\mapsto f|_{\p_+SM_T}$. This setting is related to applications, such as medical imaging, remote sensing, and atmospheric science, and has been studied in different theoretical perspectives. They include unique determination of coefficients in \cite{IKun, CS1, CS2, CS3, CS98, SU2d}, stability estimates in \cite{Bal14, Bal10,  Bal18,BalMonard_time_harmonic, Machida14, Wang1999, ZhaoZ18} and the Riemannian setting in  \cite{AY15, MST10, MST10stability, MST11, McDowall04}. For more extensive discussions on the development of the related problems and methodologies, we refer the readers to  \cite{Balreview, Stefanov_2003} and the reference therein.
As for the nonlinear Boltzmann equations, with the knowledge of the albedo operator, the recovery of time-independent collision kernel were investigated in \cite{LaiUhlmannYang} for the stationary equation and in \cite{LiOuyang2022} for the dynamic one. Later, the setting of the time-dependent kernel was studied in \cite{LY2023}. A related work by using the source-to-solution map to recover the metric is addressed in \cite{BKLL21}.

The question of identifying time-dependent unknowns in the dynamic equations appears naturally from the practical applications and mathematical interests. Although most of the existing theoretical works mentioned above for the inverse transport problem concern the case when the coefficients depend only on spatial variables or velocity, it is important to understand and tackle the new challenges brought by the time $t$ during the reconstruction process. 
Motivated by the studies in \cite{bellassouedInverseProblemLinear2019,BELLASSOUED19} in which time-dependent absorption and scattering coefficients are stably recovered from the observations, we are interested in the questions of recovering the power type nonlinearity $q(t,x)$.

\subsection{Main result}
Given $(x,v)\in S\overline{M}$, let $\gamma_{x,v}(s)$ be the unique maximal geodesic satisfying the initial conditions 
$$
\gamma_{x,v}(0)=x,\quad \dot \gamma_{x,v}(0)=v,
$$
and it is defined on a maximal interval of existence $[-\tau_-(x,v),\tau_+(x,v)]$, see Section~\ref{Sec:notation} for more details. Here $\tau(x,v):=\tau_+(x,v)> 0$ is the forward exit time of the geodesic $\gamma_{x,v}$ so that $\gamma_{x,v}(\tau(x,v))\in\p M$. Since $\overline M$ is non-trapping, any maximal geodesic will exit $\overline M$ in finite time, i.e. $\tau(x,v)<\infty$ for any $(x,v)\in S\overline{M}$.

\begin{definition}
    We define the attenuated X-ray transform of a function $S\in L^\infty(M)$ with attenuation $\omega\in L^\infty(SM)$ by 
$$
    I_{\omega} S(x,v):=\int_0^{\tau(x,v)} S(\gamma_{x,v}(s)) e^{-\int_0^s \omega(\gamma_{x,v}(r),\dot{\gamma}_{x,v}(r))\,dr}\,ds,\quad
    \quad (x,v)\in \p_-SM.
$$
We say that $I_\omega$ is injective if $I_\omega S(x,v)=0$ for almost everywhere (a.e.) $(x,v)\in \p_-SM$ implies that $S\equiv 0$.
\end{definition}
The attenuated X-ray transform has been extensively studied, see e.g. survey papers \cite{Finch2003, Kuchment2006} for the Euclidean case and \cite{ZhouSurvey2024} for the Riemannian case, and the references therein.

We are now ready to present our main theorems. 
\begin{theorem}(Euclidean setting)\label{thm:main thm euclidean}
    Let $M$ be an open bounded and strictly convex domain in $\R^n$, $n\geq 2$, with smooth boundary. Let $(\sigma,\mu)\in \Omega$ so that the attenuated X-ray transform $I_{(m-1)\sigma}$ is injective. 
     Assume that $q_1$ and $q_2$ are in $L^\infty(M_T)$.
     Then $\mathcal A_{q_1} =\mathcal A_{q_2}$ on $\mathcal{X}_\delta$ implies 
     $$
         q_1=q_2\quad \hbox{ in } M_T.
     $$ 
\end{theorem}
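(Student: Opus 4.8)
The plan is to run an $m$-th order linearization of the measurement map $\mathcal{A}_q$, pair the resulting equation against solutions of the adjoint linear transport equation, and then invert a weighted light ray transform by means of the injectivity of $I_{(m-1)\sigma}$. First, fix $(h_0,h_-)\in\mathcal{X}_\delta$ and, for $|\epsilon|$ small, let $f_{j,\epsilon}$ be the solution of \eqref{EQN: RTE equ} with coefficient $q_j$ and data $\epsilon(h_0,h_-)$. By Theorem~\ref{THM:WELL}, the smoothness of $f\mapsto f^m$, and the implicit function theorem, $\epsilon\mapsto f_{j,\epsilon}$ is $C^m$ from a neighbourhood of $0$ into $L^\infty(SM_T)$, with $f_{j,0}\equiv 0$; since $m\ge 2$ this yields $f_{j,\epsilon}=\epsilon f^{(1)}+O(\epsilon^2)$ and $f_{j,\epsilon}^m=\epsilon^m (f^{(1)})^m+O(\epsilon^{m+1})$, where $f^{(1)}:=\partial_\epsilon f_{j,\epsilon}|_{\epsilon=0}$ solves $\partial_t f^{(1)}+Xf^{(1)}+\sigma f^{(1)}-K(f^{(1)})=0$ with data $(h_0,h_-)$ and is independent of $j$. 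As the linear part $\partial_t+X+\sigma-K$ is $\epsilon$-independent, differentiating \eqref{EQN: RTE equ} $m$ times at $\epsilon=0$ gives, for $u_j:=\partial_\epsilon^m f_{j,\epsilon}|_{\epsilon=0}$,
\begin{align*}
\partial_t u_j+Xu_j+\sigma u_j-K(u_j)=-m!\,q_j\,(f^{(1)})^m\ \text{ in }SM_T,\qquad u_j|_{t=0}=0,\quad u_j|_{\partial_-SM_T}=0 .
\end{align*}
Writing $\tilde u:=u_1-u_2$, the hypothesis $\mathcal{A}_{q_1}=\mathcal{A}_{q_2}$ additionally forces $\tilde u|_{t=T}=0$ and $\tilde u|_{\partial_+SM_T}=0$, so $\tilde u$ vanishes on the whole of $\partial(SM_T)$ and solves the linear transport equation with source $-m!(q_1-q_2)(f^{(1)})^m$.

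Next, let $\psi\in L^\infty(SM_T)$ solve the adjoint equation $-\partial_t\psi-X\psi+\sigma\psi-K^\ast(\psi)=0$ in $SM_T$, where $K^\ast$ is $K$ with $v,v'$ swapped in $\mu$; its boundedness on $L^\infty$ is exactly the content of the second integral constraint on $\mu$ in the definition of $\Omega$. Multiplying the equation for $\tilde u$ by $\psi$, integrating over $SM_T$, and integrating by parts --- using that $X$ is divergence free for the Liouville measure and that $\tilde u$ vanishes on all four faces of $\partial(SM_T)$, so that every boundary term drops --- we obtain
\begin{align*}
\int_{SM_T}(q_1-q_2)\,(f^{(1)})^m\,\psi\, dt\, dx\, dv=0
\end{align*}
for every solution $f^{(1)}$ of the linear transport IBVP (by linearity and degree-$m$ homogeneity, $f^{(1)}$ may be taken to be an arbitrary linear solution) and every adjoint solution $\psi$. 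Since $q_1-q_2$ is independent of $v$, this reads $\int_{M_T}(q_1-q_2)(t,x)\,\big(\int_{S_xM}(f^{(1)})^m\psi\, dv\big)\,dx\,dt=0$.

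The core step is the choice of $f^{(1)}$ and $\psi$. For a characteristic curve (light ray) $\ell=\{(t_0+s,\,x_0+sv_0)\}$ of $\partial_t+X$, $|v_0|=1$, contained in $SM_T$, I would construct $f^{(1)}$ and $\psi$ as beams concentrated along $\ell$: write each as its ballistic part, propagated along $\ell$ with the attenuation $e^{-\int\sigma}$, plus the Neumann series generated by the scattering operator, which converges and whose terms are negligible in the concentration limit precisely because of the bounds $\int_{S_xM}\mu\, dv'\le\sigma$ in $\Omega$. As the beams concentrate, the $v$-average $\int_{S_xM}(f^{(1)})^m\psi\, dv$ collapses onto $\ell$, and the attenuation factors of $(f^{(1)})^m$ and of $\psi$ combine --- up to a constant along $\ell$ --- into $\exp(-(m-1)\int\sigma)$ measured from the point where $\ell$ enters $SM_T$. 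Because $\mathcal{A}_{q_j}$ records the final data $f|_{t=T}$, $\tilde u$ vanishes also on $\{T\}\times SM$, so the identity of the previous paragraph is available for \emph{every} maximal light ray in $SM_T$, whichever of the faces $\{0\}\times SM$, $\{T\}\times SM$, $\partial_\pm SM_T$ it enters or exits through; passing to the limit gives
\begin{align*}
\int_\ell (q_1-q_2)\,\exp\!\Big(-(m-1)\!\int\sigma\Big)\, ds=0\qquad\text{for every light ray }\ell\subset SM_T .
\end{align*}

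Finally, extend $q_1-q_2$ by zero outside $M_T$; since it vanishes outside the slab, the identity above amounts to the vanishing of the full-line attenuated integral along every $45^\circ$-line of $\R_t\times\R^n$ (the attenuation accumulated before $\ell$ enters the slab only rescales the identity by a positive constant). Taking the Fourier transform in $t$, for each temporal frequency $\omega$ this reduces to the statement that the attenuated X-ray transform of $\widehat{q_1-q_2}(\omega,\cdot)$ with attenuation $(m-1)\sigma-i\omega$ vanishes on $\partial_-SM$; the injectivity of $I_{(m-1)\sigma}$ --- together with that of its imaginary-shifted analogues for small $\omega$ (a perturbation of the hypothesis), or with the analyticity in $\omega$ of $\widehat{q_1-q_2}(\omega,\cdot)$ that follows from the compact support in $t$ --- then forces $\widehat{q_1-q_2}(\omega,\cdot)\equiv 0$ for all $\omega$, hence $q_1=q_2$. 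The main obstacle is this final reduction, together with the geometrical optics construction: one must handle the nonlocal scattering operator so that only the ballistic contribution survives in the limit, and one must turn the family of weighted light ray identities on the \emph{finite} slab $(0,T)\times M$ into the X-ray transform of a single spatial function --- the point at which the inclusion of the final data $f|_{t=T}$ in $\mathcal{A}_q$ is indispensable (light rays that reach $\{T\}\times SM$ before exiting spatially would otherwise be uncontrolled).
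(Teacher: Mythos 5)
Your overall architecture --- $m$-th order linearization, pairing with adjoint solutions to obtain $\int_{SM_T}(q_1-q_2)(f^{(1)})^m\psi\,dt\,d\Sigma=0$, reduction to a weighted light ray transform, Fourier transform in $t$, and finally the attenuated X-ray transform --- is the same as the paper's, and the linearization and integral identity steps are essentially correct. But there are two genuine gaps. The first is in the construction of the special solutions. You propose pure concentration beams and assert that the Neumann-series (scattered) contributions are negligible in the concentration limit ``precisely because of the bounds $\int_{S_xM}\mu\,dv'\le\sigma$''. Subcriticality gives convergence of the Neumann series and $L^\infty$ bounds, but it does not by itself make the scattered part small relative to the ballistic part as the beam concentrates; that would require a singular-decomposition analysis of the kind used for the albedo operator, which is delicate (particularly for $n=2$, which your theorem must cover) and is not carried out here. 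The paper sidesteps this entirely with oscillatory GO solutions $u_\lambda=\varphi_\lambda\Theta_\sigma+r_\lambda$, phase $e^{i\lambda(t-x\cdot v)}$: the source $K(\varphi_\lambda\Theta_\sigma)$ tends to zero weakly in $v'$ by Riemann--Lebesgue on the sphere, so $\|r_\lambda\|_{L^2(SM_T)}\to0$ as $\lambda\to\infty$, and only \emph{afterwards} is the amplitude concentrated along a ray (with the normalization $P_{\kappa,x_0,v_0}^{1/m}$ in the forward solution, which you also need but do not mention --- without it $(f^{(1)})^m$ does not converge to a finite measure on the ray). In short, it is the oscillation, not the concentration, that kills the scattering.

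The second gap is in the final reduction. After Fourier transforming in $t$ you face the attenuated X-ray transform with complex attenuation $(m-1)\sigma-i\omega$, and you offer two ways out: a ``perturbation of the hypothesis'' for small $\omega$, or analyticity of $\widehat{q_1-q_2}(\omega,\cdot)$ in $\omega$. The perturbation route is not available: injectivity of a weighted X-ray transform is not known to be stable under $L^\infty$-small changes of the weight without additional structure, and in any case you would need it for \emph{all} $\omega$, not only small ones. The analyticity route is the right one but incomplete as stated: the paper (Theorem 3.9) makes it work by induction on the $\omega$-derivatives of the transformed data at $\omega=0$. Because the weight $e^{-(m-1)\int\sigma}$ does not depend on $\omega$, the vanishing of $\partial_\omega^k\widehat{L_WS}(0,\cdot,\cdot)$ together with the induction hypothesis shows that $\partial_\omega^k\hat q(0,\cdot)$ is annihilated by the \emph{same} transform $I_{(m-1)\sigma}$ for every $k$; only then does analyticity of $\hat q$ in $\omega$ (from compact support in $t$) give $\hat q\equiv0$. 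Without that induction, injectivity at the single frequency $\omega=0$ does not suffice to conclude.
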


For the Riemannian setting, we establish a similar uniqueness result in dimension 2, under an additional assumption on the scattering coefficient $\mu$.
 
\begin{theorem}(Riemannian setting)\label{thm:main thm geometric}
    Let $M$ be the interior of a compact non-trapping Riemannian manifold with smooth strictly convex boundary $\p M$, of dimension $\text{dim}\,M=2$. Let $(\sigma,\mu)\in\mathcal{M}$, defined by \eqref{set:mu} in subsection~\ref{sec:GO M}, so that the attenuated X-ray transform $I_{(m-1)\sigma}$ is injective.  
     Assume that $q_1$ and $q_2$ are in $L^\infty(M_T)$.
     Then $\mathcal A_{q_1} =\mathcal A_{q_2}$ on $\mathcal{X}_\delta$ implies 
     $$
         q_1=q_2\quad \hbox{ in } M_T.
     $$ 
\end{theorem}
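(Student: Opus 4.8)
The plan is to combine a higher-order linearization of the measurement operator $\mathcal A_q$ with the construction of concentrating geometric optics (GO) solutions of the \emph{linear} transport equation, thereby reducing the identification of $q$ to the injectivity of a weighted (attenuated) geodesic ray transform.

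\emph{Linearization and an integral identity.} Since $f\equiv 0$ solves \eqref{EQN: RTE equ} with zero data, for $(h_0,h_-)\in\mathcal X_\delta$ and $|\epsilon|$ small let $f_\epsilon$ solve \eqref{EQN: RTE equ} with data $\epsilon(h_0,h_-)$; Theorem~\ref{THM:WELL} (and the contraction estimates behind it) makes $\epsilon\mapsto f_\epsilon$ smooth near $0$ with $f_0\equiv 0$. Setting $u^{(k)}:=\partial_\epsilon^k f_\epsilon|_{\epsilon=0}$, the chain rule gives that $u^{(1)}$ solves the linear transport equation $\partial_t u^{(1)}+Xu^{(1)}+\sigma u^{(1)}-K(u^{(1)})=0$ with data $(h_0,h_-)$, that $u^{(k)}\equiv 0$ for $2\le k\le m-1$ (because $qf_\epsilon^m=O(\epsilon^m)$ and uniqueness for the linear equation forces these to vanish), and that $w:=u^{(m)}$ solves
\[
\partial_t w+Xw+\sigma w-K(w)=-m!\,q\,(u^{(1)})^m\ \ \text{in }SM_T,\qquad w|_{t=0}=0,\quad w|_{\p_-SM_T}=0 .
\]
Differentiating $\mathcal A_{q_1}=\mathcal A_{q_2}$ in $\epsilon$: the first linearizations are $q$-independent, hence equal, so $u^{(1)}$ is common; and the difference $w:=w_{q_1}-w_{q_2}$ additionally satisfies $w|_{t=T}=0$ and $w|_{\p_+SM_T}=0$ while solving the same linear equation with source $-m!\,(q_1-q_2)(u^{(1)})^m$. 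Pairing with any solution $\psi$ of the adjoint equation $-\partial_t\psi-X\psi+\sigma\psi-K^*(\psi)=0$ on $SM_T$ and integrating by parts (using that the geodesic field $X$ is divergence-free for the Liouville measure), the vanishing of the traces of $w$ on $\{t=0\}$, $\{t=T\}$ and $\p_\pm SM_T$ annihilates every boundary term, producing
\[
\int_{SM_T}(q_1-q_2)(t,x)\,\big(u^{(1)}(t,x,v)\big)^m\,\psi(t,x,v)\,dt\,d\Sigma=0
\]
for all admissible linear solutions $u^{(1)}$ and all adjoint solutions $\psi$.

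\emph{Geometric optics solutions.} Fix $(x_0,v_0)\in\p_-SM$, a time offset $t_0$, and a concentration parameter $\eta>0$, and construct $u^{(1)}$ and $\psi$ whose leading parts are localized in an $\eta$-tube around the spacetime characteristic $s\mapsto\big(t_0+s,\gamma_{x_0,v_0}(s),\dot\gamma_{x_0,v_0}(s)\big)$, with amplitudes carrying the integrating factors $\exp\!\big(\mp\int_0^s\sigma(\gamma_{x_0,v_0}(r),\dot\gamma_{x_0,v_0}(r))\,dr\big)$, so that these leading parts solve the scattering-free ($K=0$) linear and adjoint transport equations exactly and realize admissible data (the concentration can be carried entirely by $h_-$ for $u^{(1)}$, and by the corresponding incoming/final trace of the adjoint equation for $\psi$). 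The true scattering terms $K(u^{(1)})$ and $K^*(\psi)$ are then absorbed into lower-order remainders via a Neumann-series/Duhamel argument exploiting the non-trapping geometry; in dimension two this is precisely the step requiring $(\sigma,\mu)\in\mathcal M$, whose defining conditions ensure that these remainders are controllable and negligible against the leading terms as $\eta\to0$.

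\emph{Reduction to a ray transform and conclusion.} Inserting the GO solutions into the integral identity and letting $\eta\to0$, the product $(u^{(1)})^m\psi$ concentrates on the characteristic — its total attenuation being $e^{-m\int_0^s\sigma}\cdot e^{+\int_0^s\sigma}=e^{-(m-1)\int_0^s\sigma}$ — so the identity degenerates into the weighted light ray transform relation
\[
\int_0^{\ell}(q_1-q_2)\big(t_0+s,\gamma_{x_0,v_0}(s)\big)\,e^{-(m-1)\int_0^s\sigma(\gamma_{x_0,v_0}(r),\dot\gamma_{x_0,v_0}(r))\,dr}\,ds=0 ,
\]
with $\ell=\min(\tau(x_0,v_0),\,T-t_0)$, holding for all $(x_0,v_0)\in\p_-SM$ and all admissible $t_0$. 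Treating $t_0$ as a free parameter and invoking the injectivity hypothesis for $I_{(m-1)\sigma}$ then yields $q_1-q_2\equiv0$ in $M_T$. The main obstacle is the geometric optics construction on a general non-trapping surface — keeping the scattering corrections small is exactly why $(\sigma,\mu)$ must be restricted to $\mathcal M$ and why one works in $\dim M=2$, the regime in which geodesic ray transform tools are available; secondary technical points are the rigorous $\eta\to0$ limit of the $m$-th powers $(u^{(1)})^m$ and the decoupling of the time variable $t_0$ in the final inversion so as to genuinely reduce to the injectivity of $I_{(m-1)\sigma}$.
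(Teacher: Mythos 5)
Your overall architecture (higher-order linearization, integral identity against an adjoint solution, reduction to a weighted light ray transform, then injectivity of $I_{(m-1)\sigma}$) matches the paper, and the linearization and integral-identity steps are essentially correct. But there is a genuine gap in the middle step: the mechanism you propose for neutralizing the scattering term does not work, and it is not the mechanism for which the hypothesis $(\sigma,\mu)\in\mathcal M$ was designed. The paper's GO solutions are \emph{high-frequency oscillatory} solutions $u_\lambda=\varphi_\lambda\Theta_\sigma+r_\lambda$ with phase $e^{i\lambda(t-\tau_-(x,v))}$, and the whole point is that $\|r_\lambda\|_{L^2(SM_T)}\to 0$ as $\lambda\to\pm\infty$; this decay is proved by integrating by parts in the angular variable $\theta$ against the phase $e^{-i\lambda\tau_-(x,v')}$ inside $K(\varphi_\lambda\Theta_\sigma)$, which is exactly why one needs $\dim M=2$ (global isothermal coordinates, so $S_xM$ is a circle) and the condition $V(\Phi/V\tau_-)\in L^2(S_xM)$ in the definition of $\mathcal M$ — the vertical derivative $V\tau_-$ is the derivative of the phase, and the hypothesis rules out stationary points on the support of $\mu$. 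In your pure-concentration scheme there is no oscillation, so no such decay is available: if the adjoint solution $\psi$ is not itself concentrated (and it cannot be, if the product $(u^{(1)})^m\psi$ is to be an approximate identity along the ray), its scattered part $r_{\mathrm{adj}}$ is $O(1)$ in $L^\infty$ and generically nonvanishing along the chosen characteristic, so the cross term $\int\widetilde q\,(u^{(1)}_{\mathrm{lead}})^m\,\overline{r_{\mathrm{adj}}}$ remains $O(1)$ as the tube width shrinks and pollutes the limit. The paper in fact uses \emph{both} devices simultaneously: frequencies $\lambda$ and $\eta=m\lambda$ chosen so the phases cancel in the product $\varphi_\lambda^m\overline{\varphi}_{m\lambda}$, with $\lambda\to\infty$ killing all remainder cross terms, and then a separate concentration limit ($P_{\kappa,x_0,v_0}\to$ Dirac on $\p_-SM$ and an approximate identity in $t$) together with Santal\'o's formula to localize to a single light ray. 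Your attribution of $\mathcal M$ to ``keeping the scattering corrections small'' in a Duhamel/Neumann series is therefore not a proof of anything; the condition has no bearing on beam spreading.

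A secondary gap: your final step ``treating $t_0$ as a free parameter and invoking the injectivity of $I_{(m-1)\sigma}$'' is not immediate, because the vanishing integral involves $\widetilde q(t_0+s,\gamma_{x_0,v_0}(s))$ — the integrand moves in time along the ray — so it is a light ray transform, not a geodesic X-ray transform at fixed time. The paper bridges this by extending $\widetilde q$ by zero in $t$ (which also removes your truncation $\ell=\min(\tau,T-t_0)$ and restores the full exit time), taking the Fourier transform in $t$, and exploiting analyticity of $\hat{\widetilde q}(\eta,\cdot)$ in $\eta$: one applies injectivity of $I_{(m-1)\sigma}$ inductively to all $\eta$-derivatives of $\widehat{L_W\widetilde q}$ at $\eta=0$ (Theorem \ref{injectivity of weighted light ray}). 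This step needs to be supplied for your argument to close.
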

Roughly speaking, the set $\mathcal M\subset \Omega$ imposes certain symmetry and vanishing conditions on $\mu$,
see subsection \ref{sec:GO M} for details. In particular, see Remark \ref{examples} for examples.

An analogous setting, motivated by the two-photon photoacoustic tomography, is investigated by the authors and Uhlmann \cite{LaiUhlmannZhou22} for both inverse coefficient and source problems for the nonlinear dynamic transport equation with time-independent coefficients. The main methodology is based on deriving Carleman estimates, a weighted $L^2$ estimate, for the linear transport equation in both Euclidean and Riemannian settings \cite{BK1981, carleman, Gaitan14,  Yamamoto2016, KlibanovP2006, Klibanov08, Lai-L, Machida14}. However, by this approach, only time-independent coefficients can be identified from the measurements. Therefore, in this paper, we seek for an alternative method that is applicable to take care of the time variable in the nonlinear term.

There are two main ingredients in the proof of Theorem~\ref{thm:main thm euclidean} and Theorem~\ref{thm:main thm geometric}.
The first ingredient is the construction of geometric optics (GO) solutions for the linear transport equations, which was first developed in \cite{bellassouedInverseProblemLinear2019,BELLASSOUED19} for the determination of time-dependent absorption coefficient and scattering coefficient for the linear Boltzmann equation. The remainder term of such GO solution decays to zero in $L^2$ norm with respect to some frequency parameter. In the current work, the construction is slightly different from the approach in \cite{bellassouedInverseProblemLinear2019,BELLASSOUED19,Rezig}. A key feature of our GO solutions is that they are defined on the infinite cylinder $\mathbb R\times M$, instead of the infinite slab $(0,T)\times \mathbb R^n$. Consequently, we do not need to embed $M$ into a larger domain or manifold. See subsection~\ref{sec:GO M} for additional remarks.

Theorem \ref{thm:main thm geometric} can be viewed as a generalization of Theorem \ref{thm:main thm euclidean} to nontrivial geometry. The proof of the decay of the remainder term of GO solutions in the Euclidean case relies essentially on the existence of global coordinates on $SM=M\times \mathbb S^{n-1}$. Unlike the Euclidean case, there are no global coordinates on Riemannnian manifolds in general. However, on two dimensional Riemannian manifolds, we can apply the global isothermal coordinates to characterize the set $\mathcal M$ for the scattering coefficient $\mu$, in order to derive similar decay property for the remainder term of GO solutions.

The second ingredient is the injectivity of certain weighted light ray transforms. With the help of the GO solutions, the determination of the nonlinear coefficients can be reduced to the study of the invertibility of the following \textit{attenuated light ray transform} $L_\omega S$ of a function $S\in L^\infty(\R\times M)$  
with attenuated coefficient $\omega \in L^\infty(SM)$: 
$$
    L_{\omega}S (t,x,v):=\int_0^{\tau(x,v)}S(t+s,\gamma_{x,v}(s))e^{-\int_0^s \omega(\gamma_{x,v}(r),\dot{\gamma}_{x,v}(r))\,dr}\,ds,\quad
    \quad (t,x,v)\in \R\times \p_- SM.
$$
When $\omega\equiv 0$, this is the standard \textit{light ray transform}, denoted by $LS$, whose injectivity was first established in \cite{Stefanov89} for Minkowski spacetime. The injectivity of $L$ was later generalized to the cases of static \cite{FIKO2021, FK2019} and stationary \cite{FIO2021} Lorentzian manifolds.
By taking the Fourier transform of the light ray transform with respect to the time variable $t$, we further reduce the problem to the stationary case, namely, the attenuated X-ray transform.
To the best of our knowledge, this is the first injectivity result for the light ray transform with non-trivial weight in the non-analytic category. Weighted light ray transforms on real-analytic Lorentzian manifolds were studied in \cite{Stefanov2017}. On the other hand, most results for the light ray transform in the literature are concerned with functions or tensors with compact support, we also address the non-compact case in the Appendix \ref{appendix 2}.

Under suitable geometrical conditions on the manifolds in different dimensions, the assumption of the injectivity of $I_{(m-1)\sigma}$ in Theorem~\ref{thm:main thm euclidean} and Theorem~\ref{thm:main thm geometric} can be removed. Therefore, we have the following corollary that follows immediately from Theorem~\ref{thm:main thm euclidean} and Theorem~\ref{thm:main thm geometric}. A compact Riemannian manifold with smooth strictly convex boundary is called {\it simple} if it is simply connected and free of conjugate points \cite{PSUbook}.
\begin{corollary}\label{cor:geometric condition}
    Let $(\sigma,\mu)\in \Omega$. Assume that $q_1$ and $q_2$ are in $L^\infty(M_T)$.
    Suppose that $\mathcal A_{q_1} =\mathcal A_{q_2}$ on $\mathcal{X}_\delta$.
    Then the uniqueness results hold for the following cases:
    \begin{enumerate}
        \item For $n\geq 3$, assume that $M$ is an open bounded and strictly convex domain in $\R^n$, and $\sigma\in C^\infty(S\overline M)$, then $q_1=q_2$ in $M_T$; 
        \item For $n=2$, assume that either
        \begin{enumerate}
            \item $M$ is an open bounded and strictly convex domain in $\R^2$; or
            \item  $M$ is the interior of a simple surface and $(\sigma,\mu)\in \mathcal{M}$. 
        \end{enumerate}  
        Then if $\sigma\in C^\infty(\overline M)$, we have $q_1=q_2$ in $M_T$.
    \end{enumerate}
    
\end{corollary}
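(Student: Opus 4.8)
The plan is to reduce the corollary directly to Theorem~\ref{thm:main thm euclidean} and Theorem~\ref{thm:main thm geometric}: in each of the listed geometric settings I would verify that the attenuated X-ray transform $I_{(m-1)\sigma}$ is injective, after which the conclusion $q_1=q_2$ in $M_T$ follows verbatim from the corresponding main theorem. Note first that since $m\geq 2$ the attenuation $\omega:=(m-1)\sigma$ is a nonnegative element of $L^\infty(S\overline M)$ (of class $C^\infty$ under the stated regularity), hence an admissible attenuation in the sense of the definition of $I_\omega$; and if $\sigma\equiv 0$ then $I_\omega$ is simply the unattenuated geodesic X-ray transform.

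First I would treat case~(1), $n\geq 3$. A bounded strictly convex Euclidean domain $M\subset\R^n$ carries a strictly convex function (for instance $x\mapsto|x-x_0|^2$ for any fixed $x_0$), so $\overline M$ admits a strictly convex foliation. Under this condition, and in dimension $n\geq 3$, the (weighted) geodesic X-ray transform is injective by the local-to-global layer-stripping argument, which applies in particular to the attenuation weight $e^{-\int_0^s\omega(\gamma_{x,v}(r),\dot\gamma_{x,v}(r))\,dr}$ generated by a smooth $\omega\in C^\infty(S\overline M)$. Taking $\omega=(m-1)\sigma$ and then applying Theorem~\ref{thm:main thm euclidean} yields $q_1=q_2$ in $M_T$.

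I would then turn to case~(2), where $\dim M=2$. In subcase~(2a) a bounded strictly convex planar domain is simply connected and free of conjugate points, hence a simple surface, and in subcase~(2b) simplicity is assumed. In both subcases the hypothesis is $\sigma\in C^\infty(\overline M)$, so $\omega=(m-1)\sigma$ is a smooth function of the base point $x$ only; for such base-point attenuations the two-dimensional attenuated X-ray transform is injective --- this is classical (Novikov-type inversion) in the Euclidean case (2a), and holds on simple surfaces in case (2b). In (2b) the extra requirement $(\sigma,\mu)\in\mathcal M$ is exactly what makes Theorem~\ref{thm:main thm geometric} applicable, and combined with the injectivity of $I_{(m-1)\sigma}$ just noted this gives $q_1=q_2$; in (2a) one instead invokes Theorem~\ref{thm:main thm euclidean}, which imposes no condition on $\mu$ beyond $(\sigma,\mu)\in\Omega$.

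The main obstacle, and the reason the corollary strengthens the regularity of $\sigma$ relative to the main theorems, is precisely the passage from the unattenuated to the attenuated X-ray transform: when $\sigma$ genuinely depends on the velocity $v$ (case~(1)) the classical inversion formulas are unavailable and one has to rely on the microlocal/convex-foliation machinery, which requires smoothness of $\omega$; and in dimension two one should not expect to drop the base-point restriction, since there exist smooth direction-dependent weights whose two-dimensional weighted ray transform fails to be injective, so the hypothesis $\sigma\in C^\infty(\overline M)$ (rather than $C^\infty(S\overline M)$) there is essential rather than cosmetic.
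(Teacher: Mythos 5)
Your proposal follows essentially the same route as the paper: in each geometric setting you verify injectivity of $I_{(m-1)\sigma}$ and then invoke Theorem~\ref{thm:main thm euclidean} or Theorem~\ref{thm:main thm geometric}. The specific injectivity inputs you cite also match the paper's: the strictly convex function $|x-x_0|^2$ and the convex-foliation (layer-stripping) machinery of \cite{PSUZ2019} in case (1), and the attenuated transform on simple surfaces in case (2), with the correct pairing of subcase (2a) with Theorem~\ref{thm:main thm euclidean} and subcase (2b) with Theorem~\ref{thm:main thm geometric}.

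There is one step the paper takes that you skip, and it is not purely cosmetic. The main theorems require $I_{(m-1)\sigma}$ to be injective on $L^\infty(M)$ (that is the class in which the reduced function $\widetilde q(t_0+\cdot,\cdot)$ lives), whereas the injectivity results you appeal to are stated for smooth (or at least continuous) functions. The paper bridges this by an elliptic regularity argument: since $\overline M$ is simple and the attenuation is smooth, any $S\in L^\infty(M)$ with $I_{(m-1)\sigma}S=0$ automatically lies in $C^\infty(\overline M)$ (citing \cite[Proposition 3]{FSU}), i.e. $\ker I_{(m-1)\sigma}\cap L^\infty(M)\subset C^\infty(\overline M)$, after which the smooth-category injectivity results of \cite{PSUZ2019} and \cite{SU2011} apply. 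Your argument as written asserts injectivity of the weighted transform in the regularity class needed without justifying the passage from the smooth category to $L^\infty$; you should either add this bootstrap or cite versions of the injectivity theorems valid for bounded measurable functions. Your closing remarks on why $\sigma$ must be restricted to base-point dependence in two dimensions (Boman-type counterexamples for general direction-dependent weights) are accurate and consistent with the paper's hypotheses, though the paper does not discuss this.
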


Finally, when the system is scattering free, i.e. $\mu=0$, Theorem \ref{thm:main thm geometric} holds in any dimension.
\begin{theorem}\label{thm:riemannian with scattering}
    Let $M$ be the interior of a compact non-trapping Riemannian manifold with smooth strictly convex boundary $\p M$, of dimension $dim\,M\geq 2$. Let $(\sigma,0)\in \Omega$ so that the attenuated X-ray transform $I_{(m-1)\sigma}$ is injective.
     Assume that $q_1$ and $q_2$ are in $L^\infty(M_T)$, then $\mathcal A_{q_1} =\mathcal A_{q_2}$ on $\mathcal{X}_\delta$ implies 
     $$
         q_1=q_2\quad \hbox{ in } M_T.
     $$ 
\end{theorem}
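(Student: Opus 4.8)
The plan is to combine the standard first‑order linearization in the size of the data with an \emph{exact} integration of the linear transport equation along geodesics. Since $\mu\equiv 0$, the linearized equation decouples along characteristics, so no geometric optics construction (with its $L^2$ remainder estimate, which is what forces $\dim M=2$ in Theorem~\ref{thm:main thm geometric}) is needed; this is why the statement holds in all dimensions. Fix $(h_0,h_-)$ with $\|h_0\|_{L^\infty(SM)},\|h_-\|_{L^\infty(\p_-SM_T)}\le 1$, and for $|\epsilon|$ small consider the data $(\epsilon h_0,\epsilon h_-)\in\mathcal X_\delta$. By Theorem~\ref{THM:WELL} the problem \eqref{EQN: RTE equ} with $\mu=0$ has a unique small solution $f_{j,\epsilon}\in L^\infty(SM_T)$ for the coefficient $q_j$, $j=1,2$, and — the nonlinearity $f\mapsto q_jf^m$ being smooth — $\epsilon\mapsto f_{j,\epsilon}$ is $C^m$ near $\epsilon=0$ with $f_{j,0}=0$ (this follows from the contraction/implicit‑function argument behind the well‑posedness proof). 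Writing $f_{j,\epsilon}=\epsilon u+O(\epsilon^2)$, the leading term $u$ is independent of $j$ and solves
$$
\p_t u+Xu+\sigma u=0 \ \text{ in } SM_T,\qquad u|_{t=0}=h_0,\quad u|_{\p_-SM_T}=h_-,
$$
(there is no scattering since $\mu=0$, and the power nonlinearity contributes nothing below order $\epsilon^m$ as $m\ge 2$), while the $\epsilon^m$‑coefficient $w_j$ of $f_{j,\epsilon}$ solves
$$
\p_t w_j+Xw_j+\sigma w_j=-q_j\,u^m \ \text{ in } SM_T,\qquad w_j|_{t=0}=0,\quad w_j|_{\p_-SM_T}=0.
$$

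Because $\mathcal A_{q_1}=\mathcal A_{q_2}$ on $\mathcal X_\delta$, the solutions $f_{1,\epsilon}$ and $f_{2,\epsilon}$ agree at $t=T$ and on $\p_+SM_T$ for all small $\epsilon$; equating $\epsilon^m$‑coefficients gives $w_1=w_2$ at $t=T$ and on $\p_+SM_T$. Put $w:=w_1-w_2$ and $\tilde q:=q_1-q_2$ (extended by zero outside $M_T$, so $\operatorname{supp}\tilde q$ is bounded). Then $\p_t w+Xw+\sigma w=-\tilde q\,u^m$ in $SM_T$ and $w$ vanishes on the \emph{whole} boundary of $SM_T$: on $\{0\}\times SM$, on $\{T\}\times SM$, and on $(0,T)\times\p SM=\p_-SM_T\cup\p_+SM_T$ (off the glancing set of measure zero). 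Fix $(x_0,v_0)\in\p_-SM$ and $t_0\in\R$, and restrict the equation for $w$ to the curve $s\mapsto(t_0+s,\gamma_{x_0,v_0}(s),\dot\gamma_{x_0,v_0}(s))$ over the interval $[a,b]\subseteq[0,\tau(x_0,v_0)]$ on which $t_0+s\in[0,T]$. At $s=a$ and $s=b$ the curve lies on $\p(SM_T)$, so $w$ vanishes there; for $s\in[0,\tau(x_0,v_0)]\setminus[a,b]$ one has $t_0+s\notin(0,T)$, so $\tilde q(t_0+s,\cdot)=0$; and since $\mu=0$ the function $u$ along this curve equals $C\,e^{-\int_a^s\sigma(\gamma_{x_0,v_0}(r),\dot\gamma_{x_0,v_0}(r))dr}$ with the constant $C$ equal to the value of $h_-$ (or of $h_0$, if the curve enters $SM_T$ through $\{t=0\}$) at the entry point, which we choose nonzero. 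Multiplying the scalar ODE for $w$ along the curve by the integrating factor $e^{\int_a^s\sigma}$, integrating over $[a,b]$, using $w(a)=w(b)=0$, and dividing by the nonzero constant $C^m$, the $s$‑independent factors cancel and we are left with
$$
\int_0^{\tau(x_0,v_0)}\tilde q\big(t_0+s,\gamma_{x_0,v_0}(s)\big)\,e^{-(m-1)\int_0^s\sigma(\gamma_{x_0,v_0}(r),\dot\gamma_{x_0,v_0}(r))\,dr}\,ds\;=\;L_{(m-1)\sigma}\tilde q\,(t_0,x_0,v_0)\;=\;0 .
$$

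As $(t_0,x_0,v_0)$ runs over $\R\times\p_-SM$ this says $L_{(m-1)\sigma}\tilde q\equiv 0$. Taking the Fourier transform in $t$, for each frequency $\xi$ this becomes the vanishing of an attenuated X‑ray transform — with attenuation $(m-1)\sigma$ modulated by the oscillatory factor coming from the shift in $t$ — of the $\xi$‑Fourier coefficient of $\tilde q$ (a compactly supported function on $M$). By the hypothesis that $I_{(m-1)\sigma}$ is injective, together with the injectivity of this modulated transform (the injectivity of the weighted light ray transform $L_{(m-1)\sigma}$ discussed above; see also Appendix~\ref{appendix 2} for the handling of the transforms involved), every Fourier coefficient of $\tilde q$ vanishes, hence $\tilde q\equiv 0$ and $q_1=q_2$ in $M_T$. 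I expect the only genuinely delicate point to be the bookkeeping in the previous paragraph: the interval $[a,b]$ and which of the four boundary pieces its endpoints meet vary with $(t_0,x_0,v_0)$, and in every configuration one must check that $w$ vanishes at the endpoints, that $\tilde q$ vanishes off $[a,b]$, and that the surviving attenuation is exactly the weight $e^{-(m-1)\int_0^s\sigma}$ in the definition of $L_{(m-1)\sigma}$. Everything else reduces to results already in hand: the well‑posedness and smooth dependence of Theorem~\ref{THM:WELL} and the injectivity of the (weighted) light ray transform.
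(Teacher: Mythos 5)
Your proposal is correct in substance, but it reaches the key identity $L_{(m-1)\sigma}\widetilde q=0$ by a genuinely different route than the paper. The paper's proof of Theorem~\ref{thm:riemannian with scattering} simply reruns the full machinery of Section~\ref{sec:ICP} — the integral identity of Proposition~\ref{prop:id} against an adjoint GO solution $u_0$, the concentration kernels $P_{\kappa,x_0,v_0}$, and Santalo's formula — observing that with $\mu\equiv 0$ the remainder $r_\lambda$ in the GO ansatz vanishes identically, so the $L^2$-decay step (the only place where $\dim M=2$ is used) is vacuous. You instead exploit the absence of scattering at an earlier stage: the linearized equations decouple along characteristics, $u$ is given explicitly by $C\,e^{-\int\sigma}$ along each ray, and integrating the ODE for $w=w_1-w_2$ with the integrating factor $e^{\int\sigma}$ between the two points where the ray meets $\p(SM_T)$ (where $w=0$) yields the weighted light ray transform directly, with no adjoint solutions, no oscillatory phases, and no approximate identities. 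Your argument is more elementary and makes transparent \emph{why} the dimension restriction disappears; the paper's buys uniformity with the scattering case at the cost of carrying the GO apparatus. Your characteristic integration is legitimate because the $L^\infty$ solution with vanishing data is exactly the Duhamel formula along rays (cf.\ \eqref{ID: f integral formulation}), and your endpoint bookkeeping ($a=\max(0,-t_0)$, $b=\min(\tau(x_0,v_0),T-t_0)$, with the four boundary pieces supplying $w(a)=w(b)=0$ and $\widetilde q$ vanishing off $[a,b]$) checks out; the $s$-independent factor $e^{(m-1)\int_0^a\sigma}$ relating your weight to the one based at $s=0$ divides out harmlessly.

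Two soft spots are worth tightening. First, the claim that $\epsilon\mapsto f_{j,\epsilon}$ is $C^m$ so that one may ``equate $\epsilon^m$-coefficients'' is not proved; the rigorous substitute is the paper's expansion $f_\varepsilon=\varepsilon u+\varepsilon^m w+R_\varepsilon$ with $\|R_\varepsilon\|_{L^\infty}=O(\varepsilon^{2m-1})$ and the finite-difference operator $\Delta^m_\varepsilon$ of Section~\ref{sec:linearization}, which you should cite rather than appeal to an implicit-function argument. Second, your final step invokes ``the injectivity of this modulated transform'' at every frequency $\xi$; that is the hypothesis of Theorem~\ref{injectivity of infinite weighted light ray transform} in Appendix~\ref{appendix 2}, which is \emph{not} among the assumptions here. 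Since $\widetilde q$ is supported in $M_T$, hence compactly supported in $t$, the correct reference is Theorem~\ref{injectivity of weighted light ray}: analyticity of $\hat{\widetilde q}(\eta,\cdot)$ in $\eta$ plus induction on the $\eta$-derivatives at $\eta=0$ reduces everything to the injectivity of the single unmodulated transform $I_{(m-1)\sigma}$, which is exactly what is assumed.
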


This paper is organized as follows. In Section~\ref{sec:forward}, we introduce necessary notations and establish well-posedness theorem for the nonlinear transport equations. We also construct geometric optics (GO) solutions for the linear transport equation on Euclidean domains and two dimensional Riemannian manifolds. Section~\ref{sec:ICP} is devoted to the proof of Theorem \ref{thm:main thm euclidean} and \ref{thm:main thm geometric} by the linearization scheme and the invertibility of weighted light ray transforms. We also show determination results under the monotonicity condition $q_1\leq q_2$. As an immediate application, we study the inverse source problem in the absence of scattering in Section~\ref{sec:inverse source problem}, which is equivalent to the attenuated light ray transform. In Appendix \ref{appendix 2}, we address the invertibility of the light ray transform of functions or tensor fields that are not necessarily supported in $M_T$.

\section{Preliminaries and forward problem}\label{sec:forward}
In this section, we introduce necessary notations first and then study the well-posedness problem for the initial boundary value problem for the nonlinear transport equation \eqref{EQN: RTE equ}. Moreover, we also construct the special solutions, called geometric optics (GO) solutions, for the linear transport equation. This will be one of key ingredients in solving the inverse coefficient problems later.

\subsection{Notations}\label{Sec:notation}
Recall that we denote the forward exit time from $(x,v)\in S\overline M$ by $\tau(x,v)$, which is defined as follows:
$$\tau(x,v):=\sup\{\tilde s : \gamma_{x,v}(s)\in M \, \mbox{ for }\,  0\leq s<\tilde{s}\}<\infty.$$ 
Recall the assumption that $M$ is non-trapping. Similarly, we also define the backward exit time by
$$\tau_-(x,v):=\sup\{\tilde s : \gamma_{x,v}(-s)\in M \, \mbox{ for }\, 0\leq s<\tilde{s}\}<\infty.$$
Thus, $\gamma_{x,v}(\tau(x,v))\in \p M$ and $\gamma_{x,v}(-\tau_-(x,v))\in \p M$.
In particular, they satisfy  
$\tau(x,v)=\tau_-(x,-v)$ for all $(x,v)\in S \overline{M} $ and 
$\tau_-(x,v)|_{\p_-SM}=\tau(x,v)|_{\p_+SM}=0$.

Denote the geodesic flow through $(x,v)\in S\overline{M}$ by 
$$\rho_{x,v}(t) :=(\gamma_{x,v}(t),\dot\gamma_{x,v}(t)),\quad \rho_{x,v}(0)=(x,v).$$
Let $X$ be the generating vector field of the geodesic flow $\rho_{x,v}(t)$, that is, for a given function $f$ on $SM$, $Xf(x,v)=\frac{d}{dt}f(\rho_{x,v}(t) )|_{t=0}$. Notice that in the Euclidean space $\R^n$, $\rho_{x,v}(t) =(x+tv,\,v)$ and $X=v\cdot\nabla_x$, where $v$ is independent of $x$.

We define the spaces $L^p(SM)$ and $L^p(SM_T)$, $1\leq p<\infty$, with the norm
$$
    \|f\|_{L^p(SM)}= \LC\int_{SM} |f|^p\,d{\Sigma}\RC^{1/p} \quad\hbox{and}\quad \|f\|_{L^p(SM_T)}= \LC\int^T_0\int_{SM} |f|^p\,d{\Sigma}dt\RC^{1/p},
$$ 
with $d\Sigma=d\Sigma(x,v)$ the volume form of $SM$.
Moreover, for the spaces $L^p(\p_\pm SM_T)$, we define their norm to be
$$
\|f\|_{L^p(\p_\pm SM_T)}:=\|f\|_{L^p(\p_\pm SM_T; d\xi)}= \LC\int^T_0\int_{\p_\pm SM} |f|^p\, d\xi dt\RC^{1/p},
$$ 
where $d\xi(x,v):=|\langle \nu(x),v\rangle_{g(x)}|d\tilde{\xi}(x,v)$ with $d\tilde{\xi}$ the standard volume form of $\p SM$. 
Hereafter, we will drop the subindex $g(x)$ in $\langle \nu(x),v\rangle_{g(x)}$ if no confusion arises. When $p=\infty$, $L^\infty(SM)$, $L^\infty(SM_T)$ and $L^\infty(\p_\pm SM_T)$ are the standard vector spaces consisting of all functions that are essentially bounded.

\subsection{The forward problems}\label{sec:wellposedness}
The following well-posedness results for the linear transport equation was proven in \cite{LaiUhlmannZhou22}.
\begin{proposition}[Well-posedness for linear transport equation]\label{prop:forward problem linear}
	Suppose that $(\sigma,\mu)\in\Omega$. Let $S\in L^\infty(SM_T)$, $h_0\in L^\infty(SM)$ and $h_-\in L^\infty(\p_-SM_T)$.
	We consider the following problem:
	\begin{align}\label{EQN: linear RTE source}
	\left\{\begin{array}{rcll}
		\p_tf + X f + \sigma f &=& K(f) + S & \hbox{in }SM_T,  \\
		f &=&h_0 & \hbox{on }\{0\}\times SM,\\
		f &=& h_- & \hbox{on }\p_-SM_T.
	\end{array}\right.
\end{align}
Then \eqref{EQN: linear RTE source} has a unique solution $f$ in $L^\infty(SM_T)$ satisfying
\begin{align}\label{stability}
	\|f\|_{L^\infty(SM_T)}\leq C\LC\|h_0\|_{L^\infty(SM)} + \|h_-\|_{L^\infty(\p_-SM_T)} + \|S\|_{L^\infty(SM_T)}\RC,
\end{align}
where the constant $C$ depends on $\sigma$ and $T$.
\end{proposition}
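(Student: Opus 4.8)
The plan is to prove well-posedness for \eqref{EQN: linear RTE source} by a standard fixed-point (Neumann series) argument in $L^\infty(SM_T)$, exploiting that the transport operator $\p_t + X + \sigma$ is invertible along characteristics and that the scattering operator $K$ is a bounded perturbation controlled by the structural assumptions $(\sigma,\mu)\in\Omega$. First I would reduce to the case of homogeneous data by absorbing $h_0$ and $h_-$ into the source: one introduces the \emph{free transport solve} $f = \mathcal{T}(S, h_0, h_-)$ defined by integrating along the geodesic/characteristic curves $t\mapsto (t,\rho_{x,v}(t-t_0))$ backward from the point $(t,x,v)$ until one exits either through $\{t=0\}$ (picking up $h_0$) or through $\p_-SM_T$ (picking up $h_-$). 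Concretely, writing $u(s) = f(t-s, \gamma_{x,v}(-s), \dot\gamma_{x,v}(-s))$ and letting $s^*$ be the first time the backward characteristic leaves $SM_T$, the ODE $\frac{d}{ds}u(s) = -\sigma\, u(s) + (\text{RHS})$ is solved explicitly with the integrating factor $e^{-\int_0^s \sigma\,dr}$, giving an integral formula for $f$ in terms of $K(f) + S$ and the data. Because $\tau(x,v)\le \operatorname{diam}$-type bounds are finite (non-trapping) and $s^* \le T$, this operation is bounded on $L^\infty$.

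Next I would set up the map $\Phi(f) := \mathcal{T}(K(f) + S, h_0, h_-)$ on $L^\infty(SM_T)$ and show it is a contraction after possibly iterating. The key estimate is that the free transport solve with zero data satisfies $\|\mathcal{T}(G,0,0)\|_{L^\infty(SM_T)} \le C\,\|G\|_{L^1_s L^\infty}$-type bound; more usefully, one uses the $L^\infty$ bound $|f(t,x,v)| \le \int_0^{s^*} e^{-\int_0^s\sigma\,dr}\,|(K(f)+S)(\cdots)|\,ds + (\text{data terms})$. For the scattering term, the crucial point is the subcriticality condition $\int_{S_xM}\mu(x,v,v')\,dv' \le \sigma(x,v)$ from \eqref{set M}: it gives $|K(f)(t,x,v)| \le \sigma(x,v)\,\|f(t,\cdot,\cdot)\|_{L^\infty(SM)}$ pointwise, so that in the integral formula the factor $\sigma\, e^{-\int_0^s\sigma\,dr}$ appears and integrates to something $\le 1 - e^{-\int_0^{s^*}\sigma} < 1$. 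This "absorption beats scattering" cancellation, combined with a Gr\"onwall/iteration argument in the time variable (splitting $[0,T]$ into small slabs, or simply iterating $\Phi$ and using that the $k$-th iterate of the scattering contribution carries a factor $(\sigma_0 T)^k/k!$ or similar), yields a convergent Neumann series. The uniqueness follows because the difference of two solutions solves the homogeneous problem and the same contraction estimate forces it to vanish.

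Finally, the stability bound \eqref{stability} falls out of the same fixed-point estimate: summing the Neumann series gives $\|f\|_{L^\infty(SM_T)} \le C(\|S\|_{L^\infty(SM_T)} + \|h_0\|_{L^\infty(SM)} + \|h_-\|_{L^\infty(\p_-SM_T)})$ with $C$ depending only on $\sigma_0$ (through $\Omega$) and $T$. Since this proposition is quoted as having been proven in \cite{LaiUhlmannZhou22}, one may alternatively just cite that reference; but if a self-contained argument is wanted, the Riemannian case is handled exactly as the Euclidean one once the characteristics are understood as lifts of geodesics, because all that is used is the finiteness of exit times and the volume form structure on $SM$. The main obstacle, and the only genuinely delicate point, is making the scattering estimate uniform in the slab decomposition so that the contraction constant is controlled purely by $\sigma_0$ and $T$ and does not degenerate; this is precisely where the two integral inequalities on $\mu$ in \eqref{set M} (both the $v'$-average and the $v$-average being bounded by $\sigma$) are used, the second one being needed if one prefers an $L^1$-duality argument or an a priori energy-type bound rather than the crude pointwise bound. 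Everything else is routine.
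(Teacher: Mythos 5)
Your proposal is essentially correct, but note that the paper itself gives no proof of this proposition: it simply cites \cite{LaiUhlmannZhou22}, so your sketch is a reconstruction of the standard argument rather than a parallel to anything in the text. The reconstruction is sound: integrating backward along the lifted geodesic until the characteristic exits through $\{t=0\}$ or $\p_-SM$, one gets $f(t,x,v)=e^{-\int_0^{s^*}\sigma}(\text{data})+\int_0^{s^*}e^{-\int_0^s\sigma\,dr}\,(K(f)+S)\,ds$, and the subcriticality bound $|K(f)|\le\sigma\|f\|_{L^\infty}$ turns the scattering contribution into $\int_0^{s^*}\sigma\,e^{-\int_0^s\sigma\,dr}\,ds=1-e^{-\int_0^{s^*}\sigma}\le 1-e^{-\sigma_0T}<1$, so $\Phi$ is already a contraction in one step --- the slab decomposition and iterated-Gr\"onwall machinery you hedge with are unnecessary here. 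Two small points: (i) with $u(s)=f(t-s,\rho_{x,v}(-s))$ the ODE is $u'(s)=+\sigma u(s)-(K(f)+S)$, not $-\sigma u+(\text{RHS})$; your stated integrating factor $e^{-\int_0^s\sigma\,dr}$ and final integral formula are the ones consistent with the correct sign, so this is only a slip in the intermediate display; (ii) you are right that only the first inequality $\int_{S_xM}\mu(x,v,v')\,dv'\le\sigma(x,v)$ is used for the $L^\infty$ contraction --- the second (adjoint) inequality is what the paper invokes later, in the $L^p$ corollary, to get $\|K(f)\|_{L^p}\le\sigma_0\|f\|_{L^p}$ by a Schur-type/duality bound, exactly as you anticipate.
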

\begin{remark} 
    Note that the coefficients $\sigma$ and $\mu$ are assumed to be real-valued functions. Hence, for complex-valued $h_0,\,h_-$ and $S$, by splitting the real and imaginary parts, the unique existence of the solution $f$ to \eqref{EQN: linear RTE source} still holds.
\end{remark}

In addition, as shown below, the solution $f$ also satisfies the estimate in $L^p$ norm. This property will be applied to prove the existence of the GO solutions later where we only apply the case $p=2$.  
\begin{corollary}
    Let $p\in [1,\infty)$. Under the same hypothesis of Proposition~\ref{prop:forward problem linear}, there exists a constant $C>0$, independent of $S$, $h_0$ and $h_-$, such that the solution $f$ satisfies
    \begin{align}\label{stability Lp}
	\|f \|_{L^p(SM_T)}+\|f\|_{L^p(\p_+SM_T;d\xi)}\leq C\LC\|h_0\|_{L^p(SM)} + \|h_-\|_{L^p(\p_-SM_T;d\xi)} + \|S\|_{L^p(SM_T)}\RC.
\end{align}
\end{corollary}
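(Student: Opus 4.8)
The plan is to derive the $L^p$ estimate \eqref{stability Lp} by essentially the same energy/integration-by-parts argument that underlies Proposition~\ref{prop:forward problem linear}, but now tracking the $L^p$ norm instead of the $L^\infty$ norm. First I would recall that the solution $f$ already exists and lies in $L^\infty(SM_T)$ by Proposition~\ref{prop:forward problem linear}; combined with the finiteness of the measures on $SM_T$ and $\p_\pm SM_T$, this guarantees that all the integrals appearing below are finite, so the formal manipulations are justified. The starting point is the identity
\begin{align*}
(\p_t + X)(|f|^p) = p\,|f|^{p-2} f\, (\p_t + X) f = p\,|f|^{p-2} f\,\big(K(f) + S - \sigma f\big),
\end{align*}
obtained by multiplying the transport equation \eqref{EQN: linear RTE source} by $p|f|^{p-2}f$. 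Integrating over $SM$ and using the divergence theorem for the geodesic vector field $X$ (so that $\int_{SM} Xg\,d\Sigma = \int_{\p_+SM} g\,d\xi - \int_{\p_-SM} g\,d\xi$ for suitable $g$), one gets, for $w(t):=\int_{SM}|f(t)|^p\,d\Sigma$,
\begin{align*}
\frac{d}{dt} w(t) + \int_{\p_+SM}|f|^p\,d\xi - \int_{\p_-SM}|f|^p\,d\xi = p\int_{SM}|f|^{p-2}f\,\big(K(f)+S-\sigma f\big)\,d\Sigma.
\end{align*}

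Next I would estimate the right-hand side. Since $\sigma\geq 0$, the term $-p\sigma|f|^p\leq 0$ and may be dropped. For the scattering term, the structural hypotheses $(\sigma,\mu)\in\Omega$ — specifically $\mu\geq 0$ and the subcriticality bounds $\int_{S_xM}\mu(x,v,v')\,dv'\leq\sigma(x,v)$, $\int_{S_xM}\mu(x,v',v)\,dv'\leq\sigma(x,v)$ — together with Hölder's inequality in the $v'$ variable (writing $\mu = \mu^{1/p'}\mu^{1/p}$) give a pointwise bound of the form $\int_{SM}|f|^{p-1}|K(f)|\,d\Sigma \leq \int_{SM}\sigma|f|^p\,d\Sigma$, which cancels against (a multiple of) the absorption term; this is exactly the same cancellation that makes the $L^\infty$ theory work, now adapted to $L^p$. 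For the source term I would use Young's inequality, $p|f|^{p-1}|S|\leq (p-1)|f|^p + |S|^p$, to obtain
\begin{align*}
\frac{d}{dt}w(t) + \int_{\p_+SM}|f|^p\,d\xi \leq (p-1)\,w(t) + \|S(t)\|_{L^p(SM)}^p + \int_{\p_-SM}|f|^p\,d\xi.
\end{align*}

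Then Grönwall's inequality applied on $[0,T]$, with $w(0)=\|h_0\|_{L^p(SM)}^p$ and using $f|_{\p_-SM_T}=h_-$, yields
\begin{align*}
w(t)\leq e^{(p-1)T}\Big(\|h_0\|_{L^p(SM)}^p + \|S\|_{L^p(SM_T)}^p + \|h_-\|_{L^p(\p_-SM_T;d\xi)}^p\Big),
\end{align*}
which after taking the supremum over $t\in[0,T]$ controls $\|f\|_{L^p(SM_T)}^p$. Finally, integrating the differential inequality in $t$ over $[0,T]$ and moving the outgoing boundary term to the left controls $\|f\|_{L^p(\p_+SM_T;d\xi)}^p$ by the same right-hand side. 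Combining and taking $p$-th roots (using $(a+b+c)^{1/p}\leq a^{1/p}+b^{1/p}+c^{1/p}$) gives \eqref{stability Lp} with $C=C(\sigma,T,p)$. The main obstacle I anticipate is making the integration-by-parts step for $X$ rigorous at the $L^p$ regularity level: $f$ is a priori only an $L^\infty$ (mild/weak) solution, so $|f|^p$ need not be classically differentiable along geodesics. The clean way around this is to work along characteristics — express $f$ via the mild formulation (Duhamel along the geodesic flow), which is exactly how Proposition~\ref{prop:forward problem linear} is proved — and establish the inequality for $w$ directly from that integral representation, or alternatively to run the argument on a smooth approximating sequence (mollified data and coefficients) and pass to the limit using the already-established $L^\infty$ bounds together with dominated convergence. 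Everything else is a routine combination of Hölder, Young, and Grönwall.
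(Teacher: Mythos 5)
Your proposal is correct and follows essentially the same route as the paper: multiply the equation by $p|f|^{p-2}\bar f$, integrate over $SM$ using Green's formula for $X$, control the scattering term via the subcriticality conditions in $\Omega$ (the paper bounds it by $\sigma_0\|f(t)\|_{L^p}^p$ and feeds it into Gr\"onwall rather than cancelling it against the absorption, but this is an immaterial variant of the same Schur-type estimate), apply Young's inequality to the source, Gr\"onwall for the interior bound, and then integrate the differential inequality once more to capture the outgoing boundary term. Your closing remark on justifying the chain rule for $|f|^p$ at low regularity is a reasonable precaution; the paper handles this by taking real parts of the complex multiplier and citing the standard result on derivatives of absolute values.
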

\begin{proof}
    Let $f=f_{R}+if_{I}$, where $f_R$ and $f_I$ denote the real and imaginary parts of $f$. From Proposition~\ref{prop:forward problem linear}, we get $f\in L^p(SM_T)$ and thus $|f|^{p-2}\overline{f}\in L^q(SM_T)$ for ${1\over p}+{1\over q}=1$. 
Observe that  
\begin{align*}
    (\p_t+X) (|f|^p) &= p|f|^{p-1}(\p_t+X)(|f|) \\
    &= p|f|^{p-2}(f_R(\p_t+X)(f_R)+f_I(\p_t+X)(f_I))\\
    &=  \text{Re}\LC p|f|^{p-2} \overline{f} (\p_t+X)(f) \RC,
\end{align*}
where $\text{Re}(u)$ represents the real part of a complex-valued function $u$. See \cite[Theorem~ 6.17]{LiebLoss} for the derivative of the absolute value of a function.
Hence, for any fixed $t\in (0,T)$, multiplying \eqref{EQN: linear RTE source} with $|f|^{p-2}\overline{f}$ and integrating over $SM$. Taking the real part leads to
\begin{align}\label{EST:f Lp}
         \int_{SM} \LC {1\over p} (\p_t +X)|f|^p(t) +  \sigma |f|^p(t)\RC \,d\Sigma = \text{Re}\LC\int_{SM} K(f) |f|^{p-2}\overline{f}(t)\,d\Sigma + \int_{SM} S|f|^{p-2}\overline{f}(t)\,d\Sigma\RC.
\end{align}
Here we denote $f(t):=f(t,\cdot,\cdot)$ to simply the expression. 

We first deal with the first term on the right-hand side of \eqref{EST:f Lp}. 
Following the arguments in \cite[Chapter XXI, Lemma~1]{Lion}, one can show that $K$ is a linear and continuous operator mapping from $L^p(SM)$ into itself for $p\in [1,\infty]$ and satisfies $\|K(f)(t)\|_{L^p(SM)}\leq \sigma_0 \|f(t)\|_{L^p(SM)}$ for any $t\in (0,T)$. 
Applying H\"older's inequality, \eqref{set M}, and $p=(p-1)q$ to derive the following estimate:
\begin{equation}\label{EST:Kf} 
\begin{split}
    \LV \int_{SM} K(f) |f|^{p-2}\overline{f}(t)\,d\Sigma\RV 
    &\leq \|K(f)(t)\|_{L^p(SM)} \LC\int_{SM} |f|^{q(p-1)}(t) \,d\Sigma\RC^{1\over q} \\
    &\leq \sigma_0\|f(t)\|_{L^p(SM)} \LC\int_{SM} |f|^{p}(t) \,d\Sigma\RC^{1\over q} \\ 
    &\leq \sigma_0\|f(t)\|_{L^p(SM)}\|f(t)\|_{L^p(SM)}^{p-1} \\
     &\leq \sigma_0\|f(t)\|^p_{L^p(SM)}.
     \end{split}
\end{equation}
For the second term on the right-hand side of \eqref{EST:f Lp}, Young's inequality yields that
\begin{align}\label{EST:S Lp}
     \int_{SM} S|f|^{p-2}\overline{f}(t)\,d\Sigma
     \leq {1\over p}\int_{SM} |S|^p(t)\,d\Sigma + {1\over q}\int_{SM} |f|^p(t)\,d\Sigma.
\end{align}
Moreover, we deduce from Green's formula \cite[Proposition~3.5.12 and Lemma~3.6.7]{PSUbook} that
\begin{align}\label{EST:Green}
    \int_{SM} X  |f|^p(t) \,d\Sigma
    = \int_{\p SM} |f|^p(t) \LA\nu(x),v\RA\,d\tilde{\xi}
    = \int_{\p_+SM} |f|^p(t) \,d\xi-\int_{\p_-SM} |f|^p(t) \,d\xi.
\end{align}
    Denote 
    $$
        E(t) = \int_{SM} |f|^p(t) \,d\Sigma, \quad 0\leq t\leq T.
    $$
    Combining \eqref{EST:f Lp} - \eqref{EST:Green} together yields
    \begin{equation}\label{EST:f derivative}
    \begin{split}
        &\quad E'(t)+ \int_{\p_+SM} |f|^p(t) \,d\xi + p\int_{SM} \sigma|f|^p(t)\,d\Sigma \\
        &\leq \int_{\p_-SM} |f|^p(t)\,d\xi + \int_{SM}|S|^p(t)\,d\Sigma + \LC {p\over q}+p\sigma_0\RC E(t).
        \end{split}
    \end{equation}
    Moreover, we integrate over the time interval $(0,t)$ and obtain
    \begin{align}\label{EST:energy}
         E(t)
         \leq C\int^t_0 E(s)\,ds + \|f \|^p_{L^p(\p_-SM_T)} + \|S \|_{L^p(SM_T)}^p + \|f(0)\|^p_{L^p(SM)},
    \end{align}
    where $C$ depends on $p,\,q$ and $\sigma_0$.
    The Gronwall's inequality implies that
    \begin{align*} 
        E(t) \leq C\LC\|f \|^p_{L^p(\p_-SM_T)} + \|S \|_{L^p(SM_T)}^p + \|f(0)\|^p_{L^p(SM)}\RC,
    \end{align*}
    for $0\leq t\leq T$, where the constant $C>0$ depends on $p,\,q,\,\sigma_0$ and $t$. This then gives 
    \begin{align}\label{EST: f(t)}
        \|f\|_{L^p(SM_T)}^p\leq C\LC\|f \|^p_{L^p(\p_-SM_T)} + \|S \|_{L^p(SM_T)}^p + \|f(0)\|^p_{L^p(SM)}\RC,
    \end{align}
    by integrating with respect to the time variable again over $(0,T)$.

    Finally, back to \eqref{EST:f derivative}, we integrate both sides over $(0,T)$ and
    utilize \eqref{EST: f(t)} to derive
    \begin{align*} 
         \|f(T)\|^p_{L^p(SM)} + \|f \|^p_{L^p(\p_+SM_T)} 
         \leq C\LC \|f \|^p_{L^p(\p_-SM_T)} + \|S \|_{L^p(SM_T)}^p + \|f(0)\|^p_{L^p(SM)}\RC .
    \end{align*}
    This further yields 
    \begin{align}\label{EST:f boundary} 
         \|f \|^p_{L^p(\p_+SM_T)} 
         \leq C\LC\|f \|^p_{L^p(\p_-SM_T)} + \|S \|_{L^p(SM_T)}^p + \|f(0)\|^p_{L^p(SM)} \RC.
    \end{align}
    The proof is complete by combining both estimates \eqref{EST: f(t)} and \eqref{EST:f boundary}.
     \end{proof}

The forward problem for the nonlinear transport equation can be established for small data. 
\begin{theorem}[Well-posedness for nonlinear transport equation]\label{THM:WELL}
	Let $q\in L^\infty(M_T)$. Suppose that $(\sigma,\mu)\in\Omega$.
	Then there exists a small parameter $0<\delta<1$ such that for any
	\begin{equation}\label{DEF:set X}
	\begin{split}
	(h_0,h_-) \in \mathcal{X}_\delta:=  \{(h_0,h_-)\in L^\infty(SM)\times L^\infty(\p_-SM_T):\, \|h_0\|_{L^\infty(SM)}\leq \delta, \quad \|h_-\|_{L^\infty(\p_-SM_T)}\leq \delta\},
	\end{split}
	\end{equation}
	the problem \eqref{EQN: RTE equ} has a unique solution $f\in L^\infty(SM_T)$ satisfying
	$$
	\|f\|_{L^\infty(SM_T)}\leq C \LC \|h_0\|_{L^\infty(SM)}+\|h_-\|_{L^\infty(\p_-SM_T)} \RC, 
	$$
	where the positive constant $C$ is independent of $f$, $h_0$ and $h_-$.
\end{theorem}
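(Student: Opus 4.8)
The plan is to solve \eqref{EQN: RTE equ} by a fixed point argument, treating the nonlinear term $qf^m$ as a source and using the linear well-posedness from Proposition~\ref{prop:forward problem linear}. Define the map $\Phi$ that sends $u\in L^\infty(SM_T)$ to the solution $f$ of the linear problem
\begin{align*}
\left\{\begin{array}{rcll}
\p_t f + X f + \sigma f - K(f) &=& -q u^m & \hbox{in } SM_T,\\
f &=& h_0 & \hbox{on }\{0\}\times SM,\\
f &=& h_- & \hbox{on }\p_- SM_T.
\end{array}\right.
\end{align*}
By Proposition~\ref{prop:forward problem linear} this is well-defined with $\|\Phi(u)\|_{L^\infty(SM_T)}\leq C(\|h_0\|_{L^\infty(SM)}+\|h_-\|_{L^\infty(\p_-SM_T)}+\|q\|_{L^\infty(M_T)}\|u\|_{L^\infty(SM_T)}^m)$, where $C=C(\sigma,T)$. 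A fixed point of $\Phi$ is exactly a solution of \eqref{EQN: RTE equ}.

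First I would fix a radius $R>0$ to be chosen and consider the closed ball $B_R=\{u\in L^\infty(SM_T):\|u\|_{L^\infty(SM_T)}\leq R\}$. For $(h_0,h_-)\in\mathcal{X}_\delta$ and $u\in B_R$ we get $\|\Phi(u)\|_{L^\infty(SM_T)}\leq C(2\delta+\|q\|_{L^\infty}R^m)$, so $\Phi$ maps $B_R$ into itself provided $C(2\delta+\|q\|_{L^\infty}R^m)\leq R$; choosing $R$ small enough that $C\|q\|_{L^\infty}R^{m-1}\leq 1/2$ and then $\delta$ small enough that $2C\delta\leq R/2$ secures the self-mapping property. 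Next, for the contraction estimate, note that $\Phi(u_1)-\Phi(u_2)$ solves the linear problem with zero initial/incoming data and source $-q(u_1^m-u_2^m)$; the algebraic identity $u_1^m-u_2^m=(u_1-u_2)\sum_{j=0}^{m-1}u_1^j u_2^{m-1-j}$ together with $\|u_i\|_{L^\infty}\leq R$ gives $\|u_1^m-u_2^m\|_{L^\infty(SM_T)}\leq mR^{m-1}\|u_1-u_2\|_{L^\infty(SM_T)}$, hence by Proposition~\ref{prop:forward problem linear},
\begin{align*}
\|\Phi(u_1)-\Phi(u_2)\|_{L^\infty(SM_T)}\leq C\,m\,\|q\|_{L^\infty(M_T)}\,R^{m-1}\,\|u_1-u_2\|_{L^\infty(SM_T)}.
\end{align*}
Shrinking $R$ further so that $Cm\|q\|_{L^\infty}R^{m-1}\leq 1/2$ makes $\Phi$ a contraction on $B_R$, and the Banach fixed point theorem yields a unique $f\in B_R$ with $\Phi(f)=f$. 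Uniqueness in all of $L^\infty(SM_T)$ (not merely in $B_R$) follows by the same contraction estimate: any two solutions of \eqref{EQN: RTE equ} lie in some large ball on which, after possibly shrinking $\delta$, the difference estimate forces them to coincide; alternatively one runs a Gr\"onwall argument directly on the difference using the $L^\infty$ bound of the solutions and the local Lipschitz bound of $t\mapsto t^m$.

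Finally, for the claimed linear-in-data bound, since the fixed point $f$ lies in $B_R$ with $R=O(1)$, we have $\|f\|_{L^\infty}^m\leq R^{m-1}\|f\|_{L^\infty}$, so feeding this back into $\|f\|_{L^\infty(SM_T)}=\|\Phi(f)\|_{L^\infty(SM_T)}\leq C(\|h_0\|_{L^\infty(SM)}+\|h_-\|_{L^\infty(\p_-SM_T)}+\|q\|_{L^\infty}R^{m-1}\|f\|_{L^\infty(SM_T)})$ and absorbing the last term (valid since $C\|q\|_{L^\infty}R^{m-1}\leq 1/2$) gives $\|f\|_{L^\infty(SM_T)}\leq 2C(\|h_0\|_{L^\infty(SM)}+\|h_-\|_{L^\infty(\p_-SM_T)})$, as desired. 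The main obstacle is mostly bookkeeping: one must choose the parameters $R$ and $\delta$ in the right order so that the self-mapping, contraction, and final absorption steps are mutually consistent; there is no serious analytic difficulty once Proposition~\ref{prop:forward problem linear} is in hand, since $K$ and the multiplication by $q$ are bounded on $L^\infty$ and the nonlinearity $t\mapsto t^m$ is locally Lipschitz.
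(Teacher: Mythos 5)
Your proposal is correct and is essentially the paper's argument: the paper proves this theorem simply by citing the contraction-mapping proof of Theorem~2.6 in \cite{LaiUhlmannZhou22}, i.e.\ treating $qf^m$ as a source for the linear problem of Proposition~\ref{prop:forward problem linear} and running Banach's fixed point theorem on a small ball, exactly as you do. The only step to tighten is uniqueness beyond the ball $B_R$: enlarging $R$ destroys the contraction constant, so shrinking $\delta$ does not rescue that route, and you should instead rely on the Gr\"onwall-in-time argument on the difference of two bounded solutions that you mention as the alternative (or state uniqueness among small solutions, as is standard).
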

\begin{proof}
The unique existence of solution for \eqref{EQN: RTE equ} follows immediately by adapting the proof of Theorem~2.6 in \cite{LaiUhlmannZhou22} with the nonlinear term $q(t,x) u^m$.
\end{proof}

Based on the above well-posedness result, for any $(h_0,h_-)\in \mathcal{X}_\delta$ with sufficiently small $\delta$, the problem \eqref{EQN: RTE equ} has a small unique solution $f\in L^\infty(SM_T)$. Hence, the measurement map $\mathcal{A}_q:\mathcal{X}_\delta\rightarrow L^\infty(SM)\times L^\infty(\p_+SM_T)$ is well-defined.

\subsection{The adjoint problem}
In order to achieve the derivation of the integral identity in the next section, we will study the existence of solution for the adjoint problem:
\begin{align}\label{EQN: adjoint problem setup}
	\left\{\begin{array}{rcll}
		\p_t \tilde{f} + X \tilde{f} - \tilde\sigma \tilde{f} &=& -K^*(\tilde{f}) - \tilde{S}  & \hbox{in } SM_T,  \\
	    \tilde{f} &=&\tilde h_0& \hbox{on } \{T\}\times SM,\\
	    \tilde{f} &=&\tilde h_-& \hbox{on }\p_+SM_T,
	\end{array}\right.
\end{align}
where $K^*(\tilde f) (t,x,v):= \int_{S_xM} \tilde \mu(x,v',v) \tilde f(t,x,v')\,dv'$.

Suppose that $(\tilde\sigma,\tilde\mu)\in \Omega$ and $\tilde S\in L^\infty(SM_T)$. Here $\tilde \sigma$ and $\tilde\mu$ are real-valued functions and $\tilde S$ can be a complex-valued function. Let $\tilde h_0\in L^\infty(SM)$ and $\tilde h_-\in L^\infty(\p_+SM_T)$.
We denote
$$
     \sigma(x,v):=\tilde\sigma(x,-v),\quad  \mu(x,v',v):=\tilde \mu(x,-v,-v'),\quad  S(t,x,v) := \overline{\tilde S(T-t,x,-v)}. 
$$
Thus $(\sigma,\mu)\in \Omega$ and $S\in L^\infty(SM_T)$.
By Proposition~\ref{prop:forward problem linear}, there exists a unique solution $f\in L^\infty(SM_T)$ to the problem
\begin{align}\label{EQN: linear problem}
	\left\{\begin{array}{rcll}
		\p_tf  + X f  + \sigma f  &=& K(f) + S & \hbox{in } SM_T,  \\
		f  &=& h_0 & \hbox{on } \{0\}\times SM,\\
		f  &=& h_-& \hbox{on }\p_-SM_T,
	\end{array}\right.
\end{align}
where $h_0(x,v):=\overline{\tilde h_0(x,-v)}\in L^\infty(SM)$ and $h_-(t,x,v):=\overline{\tilde h_-(T-t,x,-v)}\in L^\infty(\p_-SM_T)$.

Moreover, let $\tilde f(t,x,v)=\overline{f(T-t,x,-v)}$, we observe that
\begin{align*}
    (\p_t + X )\tilde{f} (t,x,v) 
    &=  - \LC  \overline{(\p_t f )  (T-t,x,-v)}+  \overline{ (X f )(T-t,x,-v)} \RC\\
    &= \sigma(x,-v)\overline{f (T-t,x,-v)} -  \int \mu(x,-v, -v')\overline{f (T-t,x,-v')}\,dv'  - \overline{S(T-t,x,-v)}\\
    &= \tilde\sigma(x,v) \tilde{f}(t,x,v) - \int  \tilde\mu(x, v', v)\tilde{f}(t,x,v')\,dv' - \tilde{S}(t,x,v).
\end{align*}
This implies $\tilde f\in L^\infty(SM_T)$ is the solution to
$$
      \p_t \tilde{f} + X \tilde{f} - \tilde\sigma \tilde{f}= -K^*(\tilde{f}) - \tilde{S},
$$
and, moreover, it satisfies $\tilde{f}(T,x,v) = \overline{f(0,x,-v)}= \overline{h_0(x,-v)}=\tilde h_0(x,v)$ and $\tilde{f}|_{\p_+SM_T}=\tilde h_-$.
Also it satisfies the estimate  
\begin{align}\label{stability}
	\|\tilde f\|_{L^\infty(SM_T)}\leq C\LC\|\tilde h_0\|_{L^\infty(SM)} + \|\tilde h_-\|_{L^\infty(\p_+SM_T)} + \|\tilde S\|_{L^\infty(SM_T)}\RC.
\end{align}

\subsection{Construction of GO solutions}
The main goal here is to find a category of special solutions to the linear transport equations in both Euclidean and Riemannian settings. We will start by constructing such solutions for the Euclidean case in $\R\times M\times\mathbb{S}^{n-1}$. On the other hand, due to the dependence of variables $x$ and $v$ in the Riemannian case, the choice of the leading terms of the GO solutions will be different from the former case, see also Remark~\ref{remark:comparison} for more discussions.

\subsubsection{Construction of GO solutions in the Euclidean space}\label{sec:GO}
In the Euclidean space, let $M$ be an open and bounded strictly convex domain in $\R^n$ and let $g=e$ be the Euclidean metric. Then 
$$
    S_xM = \mathbb{S}^{n-1} \quad  \hbox{ and } \quad SM= M\times \mathbb{S}^{n-1}.
$$
The geodesic vector field acting on functions $f=f(x,v)$ on $SM$ will be 
$$
        Xf(x,v) = \Big.{d\over dt}f(x+tv,v)\Big|_{t=0} = v\cdot\nabla f(x,v).
$$

We consider the following linear transport equation in the Euclidean space:
$$
\p_tf + v\cdot\nabla f + \sigma f = K(f).
$$
First, we find special solutions to the equation $\p_t f+v\cdot\nabla f +\sigma f= 0$. To this end, given a real-valued function $\phi(t,x,v)\in C^\infty_0(\R, C^\infty_0(\p_- SM))$, let 
\begin{align}\label{def:varphi}
   \varphi(t,x,v) :=\phi(t-\tau_-(x,v),x-\tau_-(x,v)v,v).
\end{align}
Since $\tau_-(x+tv,v)=\tau_-(x,v)+t$, we have
\begin{align}\label{ID:time derivative}
    v\cdot \nabla \tau_-(x,v)=\Big.{d\over dt}\tau_-(x+tv,v)\Big|_{t=0} = \lim_{t\rightarrow 0}{\tau_-(x+tv,v) - \tau_-(x,v)\over t}= 1,
\end{align}
In addition, the flow $\rho_{x,v}(-\tau_-(x,v))=(x-\tau_-(x,v)v,v)$ is invariant along the line so that $v\cdot \nabla \rho_{x,v}(-\tau_-(x,v))=0$. Thus, $\varphi$ satisfies the following boundary value problem:
\begin{align*} 
	\left\{\begin{array}{rcll}
		\p_t\varphi+ v\cdot \nabla \varphi  &=& 0& \hbox{in } \R\times SM,  \\
		\varphi|_{\p_- SM}&=& \phi & \hbox{on } \R\times \p_-SM. 
    \end{array}\right.
\end{align*}
For $\lambda\neq 0$, we define the complex-valued functions $\varphi_\lambda$ by
$$
    \varphi_\lambda (t,x,v) := \varphi(t,x,v)e^{ i\lambda(t-x\cdot v)} , 
$$ 
where the subindex $\lambda$ is used to emphasize the dependence on $\lambda$. From direct computations, it is clear that $\varphi_\lambda$ are also solutions to $\p_t f+v\cdot \nabla f=0$. 

We next define the function  
$$
    \Theta_\sigma(x,v) := e^{-\int^{\tau_-(x,v)}_0\sigma(x-(\tau_-(x,v)-s)v,v)ds}.
$$
Since $\sigma(x-(\tau_-(x,v)-s)v,v)$ is also invariant along the line such that 
$$
    (v\cdot\nabla) \sigma(x-(\tau_-(x,v)-s)v,v)=0.
$$
Combining with \eqref{ID:time derivative}, we get that $\Theta_\sigma$ satisfies $(\p_t + v\cdot \nabla + \sigma) \Theta_{\sigma}(x,v)=0$.
Therefore, we deduce that the function
$
\varphi_\lambda \Theta_\sigma (t,x,v) 
$
is a solution to $(\p_t + v\cdot \nabla + \sigma)f=0$ in $\R\times SM$.

Standing on these facts, we are now ready to prove the existence of the GO solutions.

\begin{proposition}[GO solutions in Euclidean setting]\label{prop:GO}
Let $M$ be a bounded strictly convex domain in $\R^n$, $n\geq 2$. Let $(\sigma,\mu)\in \Omega$. For any $\lambda\neq 0$ and $\phi\in C^\infty_0(\R,C^\infty_0(\p_- SM))$, the linear transport equation 
$$
    \p_t u+ v\cdot \nabla u +\sigma u= K(u)
    $$
has solutions of the form 
\begin{align}\label{GO positive}
	u_\lambda (t,x,v) &= \varphi_\lambda (t,x,v) \Theta_\sigma (x,v) + r_\lambda(t,x,v)
\end{align}
in the space $L^\infty(SM_T)$.
Moreover,  
the remainder term $r_\lambda$ satisfies the estimate 
\begin{align}\label{bound r lambda}
    \|r_\lambda\|_{L^\infty(SM_T)}\leq C \sigma_0\|\phi\|_{L^\infty(\R\times\p_-SM)}.
\end{align}

In particular, the remainder $r_\lambda$ satisfies the following decay property:
\begin{align}\label{limit r positive}
\lim\limits_{\lambda \rightarrow \pm \infty} \|r_\lambda\|_{L^2(SM_T) } =0.
\end{align}
Here the constant $C>0$ depends only on $\sigma$ and $T$. 
\end{proposition}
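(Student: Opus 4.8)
The plan is to build $u_\lambda$ by perturbing the explicit approximate solution $\varphi_\lambda\Theta_\sigma$ and then solving for the correction $r_\lambda$ with the linear well-posedness theory already at hand. Since we have checked that $(\p_t + v\cdot\nabla + \sigma)(\varphi_\lambda\Theta_\sigma) = 0$, plugging the ansatz \eqref{GO positive} into $\p_t u + v\cdot\nabla u + \sigma u = K(u)$ shows that $u_\lambda$ solves the equation precisely when $r_\lambda$ solves
\begin{equation*}
\p_t r_\lambda + v\cdot\nabla r_\lambda + \sigma r_\lambda = K(r_\lambda) + K(\varphi_\lambda\Theta_\sigma)\qquad \text{in } SM_T .
\end{equation*}
I would impose the homogeneous data $r_\lambda|_{t=0}=0$ and $r_\lambda|_{\p_-SM_T}=0$. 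As $(\sigma,\mu)\in\Omega$ and $K(\varphi_\lambda\Theta_\sigma)\in L^\infty(SM_T)$ (checked in the next step), Proposition~\ref{prop:forward problem linear}, applied with source $S=K(\varphi_\lambda\Theta_\sigma)$, $h_0=0$ and $h_-=0$, produces a unique $r_\lambda\in L^\infty(SM_T)$, and $u_\lambda=\varphi_\lambda\Theta_\sigma+r_\lambda$ is then the desired solution in $L^\infty(SM_T)$.

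For the bound \eqref{bound r lambda}, I would use the $L^\infty$ stability estimate of Proposition~\ref{prop:forward problem linear}, which gives $\|r_\lambda\|_{L^\infty(SM_T)}\le C\,\|K(\varphi_\lambda\Theta_\sigma)\|_{L^\infty(SM_T)}$ with $C=C(\sigma,T)$, and then bound the source directly: $|\Theta_\sigma|\le 1$ since $\sigma\ge 0$; $|\varphi_\lambda|=|\varphi|\le\|\phi\|_{L^\infty(\R\times\p_-SM)}$ since $\varphi$ is the pullback of $\phi$ along the backward exit map; and $\int_{S_xM}\mu(x,v,v')\,dv'\le\sigma(x,v)\le\sigma_0$ by the definition of $\Omega$. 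Combining these gives $\|K(\varphi_\lambda\Theta_\sigma)\|_{L^\infty(SM_T)}\le\sigma_0\|\phi\|_{L^\infty(\R\times\p_-SM)}$, hence \eqref{bound r lambda}.

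The decay \eqref{limit r positive} is the substantive step. Here I would instead apply the $L^2$ stability estimate \eqref{stability Lp} (with $p=2$) to the equation for $r_\lambda$ with zero data, reducing the claim to showing $\|K(\varphi_\lambda\Theta_\sigma)\|_{L^2(SM_T)}\to 0$ as $\lambda\to\pm\infty$. Writing
\begin{equation*}
K(\varphi_\lambda\Theta_\sigma)(t,x,v)=e^{i\lambda t}\int_{\mathbb{S}^{n-1}}\mu(x,v,v')\,\varphi(t,x,v')\,\Theta_\sigma(x,v')\,e^{-i\lambda\,x\cdot v'}\,dv' ,
\end{equation*}
I would fix $(t,x,v)$ with $x\neq 0$, set $\hat x=x/|x|$, and slice $\mathbb{S}^{n-1}$ by $s=\hat x\cdot v'\in(-1,1)$, writing $v'=s\hat x+\sqrt{1-s^2}\,\omega$ with $\omega$ in the unit sphere of $\hat x^\perp$ and $dv'=(1-s^2)^{(n-3)/2}\,ds\,d\omega$. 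The inner integral becomes $\int_{-1}^{1}e^{-i\lambda|x|s}\,G_{t,x,v}(s)\,ds$, where $G_{t,x,v}(s)$ equals $(1-s^2)^{(n-3)/2}$ times the integral of the bounded function $\mu\varphi\Theta_\sigma$ over the $(n-2)$-sphere $\{\omega\perp\hat x\}$; since $(1-s^2)^{(n-3)/2}$ is integrable on $(-1,1)$ for all $n\ge 2$, we have $G_{t,x,v}\in L^1(-1,1)$, so the Riemann--Lebesgue lemma forces the integral to $0$ as $\lambda\to\pm\infty$ for every such $(t,x,v)$, hence for a.e.\ $(t,x,v)\in SM_T$. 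Together with the uniform bound $|K(\varphi_\lambda\Theta_\sigma)|\le\sigma_0\|\phi\|_{L^\infty(\R\times\p_-SM)}$ and the finiteness of the measure of $SM_T$, dominated convergence yields $\|K(\varphi_\lambda\Theta_\sigma)\|_{L^2(SM_T)}\to 0$, whence $\|r_\lambda\|_{L^2(SM_T)}\to 0$.

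The hard part is this last decay argument. Because $\sigma$ and $\mu$ are only in $L^\infty$, the oscillatory integral over $\mathbb{S}^{n-1}$ cannot be handled by stationary phase near the critical directions $v'=\pm\hat x$; the key is to transfer the oscillation onto the single variable $s=\hat x\cdot v'$ and to exploit that the spherical measure disintegrates with an integrable density in $s$, so that plain Riemann--Lebesgue together with dominated convergence suffices. This is also precisely where the global product structure $SM=M\times\mathbb{S}^{n-1}$ is used, which is why the Riemannian case requires the separate construction of subsection~\ref{sec:GO M}.
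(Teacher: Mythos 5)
Your proposal is correct and follows essentially the same route as the paper: the same ansatz reducing $r_\lambda$ to a source problem with zero data handled by Proposition~\ref{prop:forward problem linear}, the same $L^\infty$ bound on $K(\varphi_\lambda\Theta_\sigma)$ via the subcriticality condition in $\Omega$, the same reduction of \eqref{limit r positive} to $\|K(\varphi_\lambda\Theta_\sigma)\|_{L^2(SM_T)}\to 0$ through the $L^2$ stability estimate, and the same dominated-convergence conclusion. The only deviation is in justifying the pointwise decay of the oscillatory spherical integral: the paper invokes weak convergence of $e^{-i\lambda x\cdot v'}$ to zero in $L^2(\mathbb{S}^{n-1})$ (citing \cite{bellassouedInverseProblemLinear2019}), whereas you prove that same fact self-containedly by disintegrating $\mathbb{S}^{n-1}$ along $s=\hat x\cdot v'$ and applying Riemann--Lebesgue to an $L^1$ density---a valid and slightly more explicit substitute for the same step.
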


\begin{proof}
We start by showing the existence of $r_\lambda$. By the definitions of $\varphi_\lambda$ and $\Theta_\sigma$, we deduce from $(\sigma,\mu)\in\Omega$ that
\begin{align*}
    |K(\varphi_\lambda \Theta_\sigma)(t,x,v)|
    &\leq \int_{\mathbb{S}^{n-1}} \mu(x,v,v') |\varphi(t,x,v')| \,dv' \leq \sigma_0 \|\phi\|_{L^\infty(\R\times\p_-SM)},\quad\hbox{ for }(t,x,v)\in SM_T,
\end{align*}
which implies
$K(\varphi_\lambda \Theta_\sigma)\in L^\infty(SM_T)$. With this, based on Proposition~\ref{prop:forward problem linear}, for a fixed $\lambda\neq 0$, there exists a unique solution $r_\lambda\in L^\infty(SM_T)$ to the problem
    \begin{align}\label{EQN: linear RTE}
	\left\{\begin{array}{rcll}
		(\p_t + X  + \sigma)r_\lambda &=& K(r_\lambda) + K(\varphi_\lambda \Theta_\sigma) & \hbox{in } SM_T,  \\
		r_\lambda &=&0  & \hbox{on } \{0\}\times SM,\\
		r_\lambda &=&0 & \hbox{on }\p_-SM_T,
	\end{array}\right.
\end{align}
satisfying
\begin{align*}
    \|r_\lambda\|_{L^\infty(SM_T)} 
    \leq C\|K(\varphi_\lambda \Theta_\sigma)\|_{L^\infty(SM_T)}\leq C \sigma_0 \|\phi\|_{L^\infty(\R\times\p_-SM)}.
\end{align*}

To show \eqref{limit r positive}, we first note that by \eqref{stability Lp} with $p=2$, the remainder $r_\lambda$ also satisfies 
\begin{align}\label{EST:r lambda}
    \|r_\lambda\|_{L^2(SM_T)} 
    \leq C \|K(\varphi_\lambda \Theta_\sigma)\|_{L^2(SM_T)}.
\end{align}
Next we write $K(\varphi_\lambda \Theta_\sigma)(t,x,v)=e^{ i\lambda t}\mathcal{K}_{\Phi,\lambda}(t,x,v)$, where we denote
$$
    \mathcal{K}_{\Phi,\lambda}(t,x,v) : =\int_{\mathbb{S}^{n-1}} e^{ -i\lambda x\cdot v'} \Phi(t,x,v,v') \,dv',
$$
and
$$
    \Phi(t,x,v,v') := \mu(x,v,v') \varphi(t,x,v')  e^{-\int^{\tau_-(x,v')}_0\sigma(x-(\tau_-(x,v')-s)v',v')ds} . 
$$
For any $t\in (0,T)$, $v\in \mathbb{S}^{n-1}$ and $x\in M \setminus \{0\}$, one can see that $e^{ -i\lambda x\cdot v'}$ is weakly convergent in $L^2(\mathbb{S}^{n-1})$ (see also \cite{bellassouedInverseProblemLinear2019}). This implies that the function $\mathcal{K}_{\Phi,\lambda}(t,x,v)$ converges to zero as $\lambda \rightarrow \pm\infty$. Hence, by dominated convergence theorem, we deduce that
$$
\lim_{\lambda\rightarrow \pm\infty}\|\mathcal{K}_{\Phi,\lambda}\|_{L^2(SM_T)}=0.
$$
Finally, combining with the facts that $\|K(\varphi_\lambda \Theta_\sigma)\|_{L^2(SM_T)} = \|\mathcal{K}_{\Phi,\lambda}\|_{L^2(SM_T)}$ and \eqref{EST:r lambda}, we complete the proof. 
\end{proof}

We also have GO solutions for the adjoint problem in the following theorem, which can be shown similarly. 
\begin{proposition}\label{prop:conjugate GO}
Suppose all hypothesis in Proposition~\ref{prop:GO} hold. The linear transport equation 
$$
    \p_t u+ v\cdot \nabla u  -\sigma u= -K^*(u)
$$
has solutions of the form 
\begin{align}\label{GO negative}
	u_\lambda (t,x,v) &= \varphi_\lambda (t,x,v) \Theta_{-\sigma} (t,x,v) + r_\lambda(t,x,v)
\end{align}
in the space $L^\infty(SM_T)$.
Here $K^*(f) (t,x,v):= \int_{S_xM} \mu(x,v',v) f(t,x,v')\,dv'$. 
Moreover,  
the remainder term $r_\lambda$ satisfies the estimate
$$
    \|r_\lambda\|_{L^\infty(SM_T)}\leq C \sigma_0\|\phi\|_{L^\infty(\R\times\p_-SM)}
$$
and has the decay property
\begin{align}\label{limit r negative}
\lim\limits_{\lambda \rightarrow \pm\infty} \|r_\lambda\|_{L^2(SM_T) } =0.
\end{align}
Here the constant $C>0$ depends on $\sigma$ and $T$. 
\end{proposition}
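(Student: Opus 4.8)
The plan is to mirror, line by line, the proof of Proposition~\ref{prop:GO}, with the forward initial boundary value problem replaced by the adjoint one studied in subsection~``The adjoint problem''. First I would record the algebraic preliminaries. Define $\Theta_{-\sigma}$ by replacing $\sigma$ with $-\sigma$ in the formula for $\Theta_\sigma$; since $\sigma(x-(\tau_-(x,v)-s)v,v)$ is invariant along the line through $(x,v)$ and, by \eqref{ID:time derivative}, $v\cdot\nabla\tau_-(x,v)=1$, one checks that $(\p_t + v\cdot\nabla - \sigma)\Theta_{-\sigma}=0$ in $\R\times SM$. Combined with $(\p_t + v\cdot\nabla)\varphi_\lambda=0$, this gives that $\varphi_\lambda\Theta_{-\sigma}$ solves $(\p_t + v\cdot\nabla - \sigma)f=0$ in $\R\times SM$.

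Next, inserting the ansatz \eqref{GO negative} into the adjoint equation shows that $r_\lambda$ must solve
\begin{align*}
(\p_t + v\cdot\nabla - \sigma)r_\lambda = -K^*(r_\lambda) - K^*(\varphi_\lambda\Theta_{-\sigma}) \quad\text{in } SM_T,
\end{align*}
and it suffices to take $r_\lambda$ with zero final data on $\{T\}\times SM$ and zero data on $\p_+ SM_T$. Since $(\sigma,\mu)\in\Omega$, the source is bounded: $|K^*(\varphi_\lambda\Theta_{-\sigma})(t,x,v)| \leq \int_{\mathbb{S}^{n-1}}\mu(x,v',v)|\varphi(t,x,v')|\,dv' \leq \sigma_0\|\phi\|_{L^\infty(\R\times\p_-SM)}$, so $K^*(\varphi_\lambda\Theta_{-\sigma})\in L^\infty(SM_T)$. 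Invoking the well-posedness of the adjoint problem from subsection~``The adjoint problem'' — which is reduced to Proposition~\ref{prop:forward problem linear} through the involution $\tilde f(t,x,v)=\overline{f(T-t,x,-v)}$ together with the coefficient relabeling $\sigma(x,v)=\tilde\sigma(x,-v)$, $\mu(x,v',v)=\tilde\mu(x,-v,-v')$, which preserves membership in $\Omega$ — there is a unique $r_\lambda\in L^\infty(SM_T)$, and the estimate \eqref{stability} gives $\|r_\lambda\|_{L^\infty(SM_T)}\leq C\|K^*(\varphi_\lambda\Theta_{-\sigma})\|_{L^\infty(SM_T)}\leq C\sigma_0\|\phi\|_{L^\infty(\R\times\p_-SM)}$.

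For the decay \eqref{limit r negative}, the same change of variables turns the $L^p$ estimate \eqref{stability Lp} into its adjoint counterpart, so with $p=2$ we get $\|r_\lambda\|_{L^2(SM_T)}\leq C\|K^*(\varphi_\lambda\Theta_{-\sigma})\|_{L^2(SM_T)}$. Then I would write $K^*(\varphi_\lambda\Theta_{-\sigma})(t,x,v)=e^{i\lambda t}\mathcal{K}^*_{\Phi,\lambda}(t,x,v)$ with $\mathcal{K}^*_{\Phi,\lambda}(t,x,v)=\int_{\mathbb{S}^{n-1}} e^{-i\lambda x\cdot v'}\Phi^*(t,x,v,v')\,dv'$, where $\Phi^*(t,x,v,v'):=\mu(x,v',v)\varphi(t,x,v')\Theta_{-\sigma}(x,v')$ is bounded and compactly supported in the relevant variables. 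For fixed $t\in(0,T)$, $v\in\mathbb{S}^{n-1}$ and $x\in M\setminus\{0\}$, $e^{-i\lambda x\cdot v'}$ converges weakly to zero in $L^2(\mathbb{S}^{n-1})$ as $\lambda\to\pm\infty$ (as in Proposition~\ref{prop:GO}), hence $\mathcal{K}^*_{\Phi,\lambda}(t,x,v)\to 0$ a.e.; the dominated convergence theorem gives $\|\mathcal{K}^*_{\Phi,\lambda}\|_{L^2(SM_T)}\to 0$, and since $\|K^*(\varphi_\lambda\Theta_{-\sigma})\|_{L^2(SM_T)}=\|\mathcal{K}^*_{\Phi,\lambda}\|_{L^2(SM_T)}$, the claim \eqref{limit r negative} follows.

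The only point that requires genuine care — rather than verbatim repetition of the Euclidean argument — is verifying that the $L^\infty$ and $L^2$ stability bounds really do transfer to the adjoint problem; this is exactly what the involution $\tilde f(t,x,v)=\overline{f(T-t,x,-v)}$ plus the coefficient relabeling accomplishes, and it is already set up in subsection~``The adjoint problem''. One should also double-check the sign convention in $\Theta_{-\sigma}$ so that it solves the $(\p_t+v\cdot\nabla-\sigma)$-equation rather than the $(\p_t+v\cdot\nabla+\sigma)$-one. Everything else is routine.
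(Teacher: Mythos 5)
Your proposal is correct and is essentially the argument the paper intends (the paper only remarks that Proposition~\ref{prop:conjugate GO} ``can be shown similarly''): reduce to the adjoint initial boundary value problem with source $K^*(\varphi_\lambda\Theta_{-\sigma})$ and zero final/outgoing data, transfer the $L^\infty$ and $L^p$ stability bounds via the involution $\tilde f(t,x,v)=\overline{f(T-t,x,-v)}$, and run the same weak-convergence/dominated-convergence argument for the decay. One small correction: unlike $\Theta_\sigma\leq 1$, here $\Theta_{-\sigma}=e^{+\int_0^{\tau_-}\sigma}\geq 1$, so your intermediate bound $|K^*(\varphi_\lambda\Theta_{-\sigma})|\leq\int\mu(x,v',v)|\varphi(t,x,v')|\,dv'$ is not literally valid and should carry the extra factor $\sup\Theta_{-\sigma}\leq e^{\sigma_0\,\mathrm{diam}(M)}$, which is harmlessly absorbed into the constant $C$.
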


\subsubsection{Construction of GO solutions in the Riemannian manifold $(M,g)$}\label{sec:GO M}
In a similar spirit to the Euclidean setting above, we will construct GO solutions to the linear transport equation on a manifold $M$:
$$
\p_tf + X f + \sigma f = K(f).
$$
Given a real-valued function $\phi(t,x,v)\in C^\infty_0(\R,C^\infty_0(\p_- SM))$, let 
$$
   \varphi(t,x,v) :=\phi(t-\tau_-(x,v),\rho_{x,v}(-\tau_-(x,v))), 
$$
with the geodesic flow $\rho_{x,v}(t) =( \gamma_{x,v}(t),\dot{\gamma}_{x,v}(t))$. The definition of $\rho_{x,v}$ implies that $ \rho_{x,v}(-\tau_-(x,v))$ is invariant along the geodesic flow.
With $X\tau_-(x,v)=1$ on $SM$, we get that $\varphi$ is a solution to the following boundary value problem:
\begin{align*} 
	\left\{\begin{array}{rcll}
		\p_t\varphi+ X \varphi  &=& 0& \hbox{in } \R\times SM,  \\
		\varphi|_{\p_-SM}&=& \phi & \hbox{on } \R\times \p_-SM. 
    \end{array}\right.
\end{align*}
For each $\lambda\neq 0$, we define the complex-valued functions $\varphi_\lambda$ by
\begin{align}\label{DEF:varphi lambda}
\varphi_\lambda (t,x,v) := \varphi(t,x,v)e^{i\lambda(t-\tau_-(x,v))}. 
\end{align}
Notice that the choice of the phase function $t-\tau_-(x,v)$ is different from the Euclidean case, since in general the inner product $x\cdot v$ does not make sense on Riemannian manifolds.
Clearly, $\varphi_\lambda$ are also solutions to $\p_t f+X f=0$. 

We next define the function
$$
    \Theta_\sigma(x,v) := e^{-\int^{\tau_-(x,v)}_0\sigma(\gamma_{x,v}(-\tau_-(x,v)+s),\dot{\gamma}_{x,v}(-\tau_-(x,v)+s))\,ds},
$$
which satisfies $(\p_t + X + \sigma) \Theta_{\sigma}(x,v)=0$. To see this, notice that the function $\sigma(\rho_{x,v}(-\tau_-(x,v)+s))$  
is also invariant along the geodesic flow, i.e.
$$X \sigma(\rho_{x,v}(-\tau_-(x,v)+s))=0,$$
thus applying $X\tau_-(x,v)=1$ again gives 
\begin{align*}
    X \Theta_\sigma(x,v) = -\sigma(x,v)\Theta_\sigma(x,v).
\end{align*}
Summing up, the function
$
\varphi_\lambda \Theta_\sigma (t,x,v) 
$
is a solution to the scattering-free linear transport equation $(\p_t + X + \sigma)f=0$ in $\R\times SM$.

When $\overline M$ is a two-dimensional compact non-trapping  Riemannian manifold with smooth strictly convex boundary, it is well-known that there are global isothermal coordinates $(x_1,x_2)$ on $M$ so that in these coordinates the metric $g$ has the form 
$$g_{jk}(x)=e^{2c(x)} \delta_{jk},$$
where $c=c(x)$ is a smooth real-valued function in $\overline{M}$ and $\delta_{jk}$ is the Kronecker delta function, see \cite[Chapter 3]{PSUbook} for reference. Under the isothermal coordinates, the unit tangent bundle $SM$ has local coordinates $(x_1, x_2, \theta)$
so that $v\in S_xM$ is given by 
$$v(x,\theta)=e^{-c(x)}\LC\cos\theta \frac{\p}{\p x_1}+\sin\theta \frac{\p}{\p x_2}\RC,$$
where $\theta$ is the angle between a unit vector $v$ and ${\p\over\p x_1}$. The vertical vector field $V: C^\infty(SM)\to C^\infty(SM)$ is defined by 
$$
    Vu(x,\theta) :=\frac{\p}{\p \theta}u(x,\theta).
$$

Let $\Phi(t,x,v,v') := \mu(x,v,v') \varphi(t,x,v')\Theta_\sigma(x,v')$.
We denote a set $\mathcal{M}$ by 
\begin{equation}\label{set:mu}
\begin{split}
    \mathcal{M} :=\Big\{(\sigma,\mu)\in\Omega %\in L^\infty(SM^2)
    :\  \mu(x,v,v')&=\mu(x,v',v)\,
    \hbox{ and $V\LC{\Phi(t,x,v,\cdot)\over V\tau_-(x,\cdot)}\RC \in L^2(S_xM)$}\quad \\
    &\hbox{for all }\phi \in C^\infty_0(\R,C^\infty_0(\p_- SM))  \hbox{ and for a.e. } x\in M\Big\}.    
\end{split}
\end{equation}

\begin{proposition}[GO solutions in Riemannian setting]\label{prop:GO M}
Let $(M,g)$ be the interior of a compact non-trapping Riemannian manifold with smooth strictly convex boundary $\p M$, of dimension $dim\,M=2$. 
Let $(\sigma,\mu)\in \mathcal{M}$. For any $\lambda\neq 0$ and $\phi\in C^\infty_0(\R,C^\infty_0(\p_- SM))$, the linear transport equation 
$$
    \p_t u+X u +\sigma u= K(u)
    $$
has solutions of the form 
\begin{align}\label{M GO positive}
	u_\lambda (t,x,v) &= \varphi_\lambda (t,x,v) \Theta_\sigma (x,v) + r_\lambda(t,x,v)
\end{align}
in the space $L^\infty(SM_T)$.
Moreover,  
the remainder term $r_\lambda$ satisfies the estimate 
\begin{align}\label{M bound r lambda}
    \|r_\lambda\|_{L^\infty(SM_T)}\leq C \sigma_0\|\phi\|_{L^\infty(\R\times\p_-SM)}.
\end{align}

In particular, 
the remainder $r_\lambda$ satisfies the following decay property:
\begin{align}\label{M limit r positive}
\lim\limits_{\lambda\rightarrow\pm\infty} \|r_\lambda\|_{L^2(SM_T) } =0.
\end{align}
Here the constant $C>0$ depends only on $\sigma$ and $T$. 
\end{proposition}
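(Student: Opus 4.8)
The plan is to follow the scheme of the proof of Proposition~\ref{prop:GO}, with two changes dictated by the non-Euclidean geometry: the Euclidean phase $x\cdot v$ is replaced by the backward exit time $\tau_-(x,v)$ (cf.\ \eqref{DEF:varphi lambda}), and the weak $L^2(\mathbb{S}^{n-1})$ convergence argument is replaced by a single integration by parts in the angular fiber variable, for which the conditions defining $\mathcal{M}$ in \eqref{set:mu} are tailor-made. First I would set up the remainder: since $\varphi_\lambda\Theta_\sigma$ solves $(\p_t+X+\sigma)f=0$ in $\R\times SM$ (as established above), a function $u_\lambda$ of the form \eqref{M GO positive} solves $\p_t u+Xu+\sigma u=K(u)$ in $SM_T$ if and only if $r_\lambda$ solves
\begin{align*}
	\left\{\begin{array}{rcll}
		(\p_t + X + \sigma)r_\lambda &=& K(r_\lambda) + K(\varphi_\lambda\Theta_\sigma) & \hbox{in } SM_T,\\
		r_\lambda &=& 0 & \hbox{on } \{0\}\times SM\ \text{ and on }\ \p_-SM_T.
	\end{array}\right.
\end{align*}
Using $(\sigma,\mu)\in\mathcal{M}\subset\Omega$ and $|\Theta_\sigma|\le 1$ one sees that $\|K(\varphi_\lambda\Theta_\sigma)\|_{L^\infty(SM_T)}\le\sigma_0\|\phi\|_{L^\infty(\R\times\p_-SM)}$, so Proposition~\ref{prop:forward problem linear} yields a unique $r_\lambda\in L^\infty(SM_T)$ solving this system, and its stability estimate gives \eqref{M bound r lambda}.

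For the decay \eqref{M limit r positive}, I would apply \eqref{stability Lp} with $p=2$ (and vanishing initial and incoming data) to obtain $\|r_\lambda\|_{L^2(SM_T)}\le C\|K(\varphi_\lambda\Theta_\sigma)\|_{L^2(SM_T)}$, and then write, with $\Phi(t,x,v,v'):=\mu(x,v,v')\varphi(t,x,v')\Theta_\sigma(x,v')$ as in \eqref{set:mu},
$$
	K(\varphi_\lambda\Theta_\sigma)(t,x,v)=e^{i\lambda t}\mathcal{K}_{\Phi,\lambda}(t,x,v), \qquad \mathcal{K}_{\Phi,\lambda}(t,x,v):=\int_{S_xM}e^{-i\lambda\tau_-(x,v')}\Phi(t,x,v,v')\,dv',
$$
so that $\|K(\varphi_\lambda\Theta_\sigma)\|_{L^2(SM_T)}=\|\mathcal{K}_{\Phi,\lambda}\|_{L^2(SM_T)}$ and it suffices to show $\mathcal{K}_{\Phi,\lambda}\to 0$ in $L^2(SM_T)$. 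Fixing $t$, $v$ and an interior point $x$ and passing to the global isothermal coordinates $(x_1,x_2,\theta)$ --- in which $S_xM$ is a circle, $V=\p_\theta$, and $\theta\mapsto\tau_-(x,v')$ is smooth and $2\pi$-periodic --- I would use the identity $\p_\theta(e^{-i\lambda\tau_-})=-i\lambda(V\tau_-)e^{-i\lambda\tau_-}$ and integrate by parts on the circle (the boundary terms vanishing by periodicity) to get
$$
	\mathcal{K}_{\Phi,\lambda}(t,x,v)=\frac{1}{i\lambda}\int_{S_xM}e^{-i\lambda\tau_-(x,v')}\,V\LC\frac{\Phi(t,x,v,\cdot)}{V\tau_-(x,\cdot)}\RC(v')\,dv',
$$
and then, by the Cauchy--Schwarz inequality on the finite-measure fiber together with the defining property of $\mathcal{M}$,
$$
	|\mathcal{K}_{\Phi,\lambda}(t,x,v)|\le\frac{C}{|\lambda|}\left\|V\LC\frac{\Phi(t,x,v,\cdot)}{V\tau_-(x,\cdot)}\RC\right\|_{L^2(S_xM)}\to 0\quad\text{as }\lambda\to\pm\infty
$$
for a.e.\ $(t,x,v)\in SM_T$. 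Since $|\mathcal{K}_{\Phi,\lambda}|$ is also uniformly bounded (by $\sigma_0\|\phi\|_{L^\infty(\R\times\p_-SM)}$) on the finite-measure set $SM_T$, the dominated convergence theorem gives $\|\mathcal{K}_{\Phi,\lambda}\|_{L^2(SM_T)}\to 0$, and combining the estimates above yields \eqref{M limit r positive}.

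I expect the integration by parts to be the crux. Unlike the Euclidean phase, the exit time $\tau_-(x,\cdot)$, being a smooth periodic function on the circle $S_xM$, always has critical points, so $V\tau_-$ vanishes there, and for a generic scattering kernel $\mu$ neither $\Phi/V\tau_-$ nor its vertical derivative is square integrable on $S_xM$. This is exactly why one restricts to $(\sigma,\mu)\in\mathcal{M}$: the requirement $V(\Phi(t,x,v,\cdot)/V\tau_-(x,\cdot))\in L^2(S_xM)$ forces $\mu$ to vanish to a suitable order along these critical directions, which both legitimizes the integration by parts and produces the $O(1/|\lambda|)$ decay used above. The remaining ingredients --- global isothermal coordinates (available because $\dim M=2$), smoothness and $2\pi$-periodicity of $\tau_-(x,\cdot)$ for interior $x$, and upgrading the a.e.\ pointwise limit to an $L^2(SM_T)$ limit via a uniform bound --- are routine and parallel to the Euclidean proof, while the symmetry condition $\mu(x,v,v')=\mu(x,v',v)$ appearing in \eqref{set:mu} plays no role in this proposition but is what makes the analogous construction go through for the adjoint transport equation.
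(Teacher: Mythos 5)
Your proposal is correct and follows essentially the same route as the paper's proof: the remainder is produced by Proposition~\ref{prop:forward problem linear} applied to the source $K(\varphi_\lambda\Theta_\sigma)$ with zero initial/incoming data, the $L^2$ decay is reduced via \eqref{stability Lp} to that of $\mathcal{K}^M_{\Phi,\lambda}$, and the key step is the same integration by parts in the angular variable $\theta$ on the fiber $S_xM$ using the isothermal coordinates, with the hypothesis $V(\Phi/V\tau_-)\in L^2(S_xM)$ from \eqref{set:mu} justifying the manipulation and yielding the $O(1/|\lambda|)$ pointwise decay, followed by dominated convergence. Your added remarks (periodicity killing the boundary terms, the explicit Cauchy--Schwarz bound, and the observation that the symmetry of $\mu$ is only needed for the adjoint construction) are all consistent with the paper.
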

\begin{proof}
Following the same argument as the proof of Proposition~\ref{prop:GO}, the estimate \eqref{M bound r lambda} is valid for the manifold $M$ as well. Therefore, we will only focus on showing \eqref{M limit r positive}.

To this end, by \eqref{stability Lp} with $p=2$, the remainder $r_\lambda$ also satisfies 
\begin{align}\label{Decay r}
    \|r_\lambda\|_{L^2(SM_T)} 
    \leq C\|K(\varphi_\lambda \Theta_\sigma)\|_{L^2(SM_T)}.
\end{align}
To analyze the decay property of $r_\lambda$ in $L^2$ norm in the manifold $M$, it is sufficient to study the operator $K$. We write $K(\varphi_\lambda \Theta_\sigma)(t,x,v)=e^{ i\lambda t}\mathcal{K}^M_{\Phi,\lambda}(t,x,v)$,  
where $\mathcal{K}^M_{\Phi,\lambda}$ is defined by
$$
    \mathcal{K}_{\Phi,\lambda}^M (t,x,v) : =\int_{S_xM}e^{ -i\lambda \tau_-(x,v')} \Phi(t,x,v,v') \,dv' 
$$
with 
$$
    \Phi(t,x,v,v') := \mu(x,v,v') \varphi(t,x,v')e^{-\int^{\tau_-(x,v')}_{0} \sigma(\gamma_{x,v'}(-\tau_-(x,v')+s),\dot{\gamma}_{x,v'}(-\tau_-(x,v')+s))ds}.
$$
Note that the volume form $dv'$ of $(S_xM, g(x))$ is $d\theta$ \cite{PSUbook}. With this, applying the integration by parts, we derive
\begin{align*}
    \int_{S_xM} e^{-i\lambda \tau_-(x,v')}\Phi(t,x,v,v')\,dv' & =\int_0^{2\pi} e^{-i\lambda \tau_-(x,v'(x,\theta))}\Phi(t,x,v,v'(x,\theta))\,d\theta\\
    & =\int_0^{2\pi} \frac{1}{-i\lambda \p_\theta\tau_-} \Phi\,d(e^{-i\lambda \tau_-})\\
    &=\frac{1}{i\lambda}\int_0^{2\pi} e^{-i\lambda \tau_-}\p_\theta \LC \frac{\Phi}{\p_\theta \tau_-}\RC\,d\theta\\
    & = \frac{1}{i\lambda}\int_{S_xM} e^{-i\lambda \tau_-(x,v')} V\LC\frac{\Phi}{V\tau_-}\RC(t,x,v,v')\,dv'.
\end{align*} 
Due to $(\sigma,\mu)\in \mathcal{M}$, we know $V(\Phi/V\tau_-)\in L^2(S_xM)$.  
Thus $\mathcal{K}_{\Phi,\lambda}^M\to 0$ as $\lambda\to\pm\infty$, which gives $\|\mathcal{K}_{\Phi,\lambda}^M\|_{L^2(SM_T)}\to 0$ by applying Lebesgue dominate convergence theorem. Also with \eqref{Decay r} and $\|K(\varphi_\lambda \Theta_\sigma)\|_{L^2(SM_T)} = \|\mathcal{K}_{\Phi,\lambda}^M\|_{L^2(SM_T)}$, we obtain \eqref{M limit r positive}.
\end{proof}

\begin{remark}
The assumption on the decay of $V(\Phi/V\tau_-)$ is necessary. For example, let $M$ be the Euclidean disk $D=\{x\in\mathbb R^2: |x|<1\}$ with the polar coordinate $(r,\eta)$ with $0\leq r<1$ and $0\leq \eta < 2\pi$. Then for $(x,v)\in SM$, $x=(r\cos\eta, r\sin\eta)$, and let $v=(\cos\theta,\sin\theta)$. Given $(x,v)\in SM$, one can check that
$$
    \tau_-(x,v)=\tau_-(r,\eta,\theta)=r\cos(\theta-\eta)+\sqrt{1-r^2\sin^2(\theta-\eta)}, 
$$ 
thus
$$
    \p_\theta \tau_-=-r\sin(\theta-\eta)\LC 1+\frac{r\cos(\theta-\eta)}{\sqrt{1-r^2\sin^2(\theta-\eta)}}\RC.
$$
It follows that
$$
    |\p_\theta \tau_-|\leq (r+r^2) |\sin (\theta-\eta)| 
$$
since 
$$
    \frac{r\cos(\theta-\eta)}{\sqrt{1-r^2\sin^2(\theta-\eta)}}\leq \frac{r\cos(\theta-\eta)}{\sqrt{\cos^2(\theta-\eta)+ (1-r^2)\sin^2(\theta-\eta)}}\leq r.
$$
Notice that when $\theta$ is sufficiently close to $\eta$, $|\frac{1}{\sin(\theta-\eta)}|\sim \frac{1}{|\theta-\eta|}$, which is not in $L^1$ or $L^2$ spaces.
\end{remark}

\begin{remark}\label{examples}
    One may replace the assumption $V(\Phi/V\tau_-)\in L^2(S_xM)$ by the hypothesis that $\sigma, \mu$ are sufficiently regular, and for any $(x,v)\in SM$, $\mu(x, v,\cdot)=0$ near $v'\in S_xM$ such that $V\tau_-(x,v')=0$.
\end{remark}
\begin{remark}
    Though the constructions of GO solutions for $\R^n$ ($n\geq 2$) and a two-dimensional manifold $M$ have the same format and share alike properties. However, unlike the Euclidean space, the $x$ and $v$ variables are related in the Riemanian manifold so that certain constraints are needed to be imposed on $\mu$ to ensure similar decay property of the remainder term in the construction of GO solutions.
\end{remark}

Similarly, the conjugate equation also has the corresponding GO solutions for the Geometric setting.
\begin{proposition}\label{prop:conjugate GO M}
Suppose all hypothesis in Proposition~\ref{prop:GO M} are satisfied.
For any $\lambda\neq 0$ and $\phi\in C^\infty_0(\R,C^\infty_0(\p_- SM))$, the linear transport equation 
$$
    \p_t u+X u  -\sigma u= -K^*(u)
$$
has solutions of the form 
\begin{align}\label{GO negative M}
	u_\lambda (t,x,v) &= \varphi_\lambda (t,x,v) \Theta_{-\sigma} (t,x,v) + r_\lambda(t,x,v)
\end{align}
in the space $L^\infty(SM_T)$.
Here $K^*(f) (t,x,v):= \int_{S_xM} \mu(x,v',v) f(t,x,v')\,dv'$. 
Moreover,  
the remainder term $r_\lambda$ satisfies the estimate
$$
    \|r_\lambda\|_{L^\infty(SM_T)}\leq C \sigma_0\|\phi\|_{L^\infty(\R\times\p_-SM)}
$$
and has the decay property
\begin{align}\label{limit r negative M}
\lim\limits_{\lambda\rightarrow\pm\infty} \|r_\lambda\|_{L^2(SM_T) } =0.
\end{align}
Here the constant $C>0$ depends on $\sigma$ and $T$. 
\end{proposition}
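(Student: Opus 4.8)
The plan is to follow the scheme of the proof of Proposition~\ref{prop:GO M}, now working with the adjoint equation in place of the forward one and invoking the well-posedness of the adjoint problem \eqref{EQN: adjoint problem setup}. First I would check that $\varphi_\lambda\Theta_{-\sigma}$ is an exact solution of the scattering-free adjoint equation $(\p_t+X-\sigma)f=0$ on $\R\times SM$: the function $\varphi_\lambda$ already satisfies $\p_tf+Xf=0$ by construction, and replacing $\sigma$ by $-\sigma$ in the identity $X\Theta_\sigma=-\sigma\Theta_\sigma$ gives $X\Theta_{-\sigma}=\sigma\Theta_{-\sigma}$, so that the Leibniz rule for the derivation $\p_t+X$ yields $(\p_t+X-\sigma)(\varphi_\lambda\Theta_{-\sigma})=0$. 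Substituting the ansatz $u_\lambda=\varphi_\lambda\Theta_{-\sigma}+r_\lambda$ into $\p_tu+Xu-\sigma u=-K^*(u)$ then reduces the proposition to solving
$$
(\p_t+X-\sigma)r_\lambda=-K^*(r_\lambda)-K^*(\varphi_\lambda\Theta_{-\sigma})\quad\text{in }SM_T,
$$
with vanishing final data $r_\lambda|_{t=T}=0$ and vanishing data $r_\lambda|_{\p_+SM_T}=0$ on the outgoing boundary.

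Since $(\sigma,\mu)\in\Omega\supset\mathcal{M}$, the source term obeys $|K^*(\varphi_\lambda\Theta_{-\sigma})|\leq\sigma_0\|\phi\|_{L^\infty(\R\times\p_-SM)}$ and hence lies in $L^\infty(SM_T)$; the well-posedness of \eqref{EQN: adjoint problem setup}, which follows from Proposition~\ref{prop:forward problem linear} via the reflection $\tilde f(t,x,v)=\overline{f(T-t,x,-v)}$, then produces a unique $r_\lambda\in L^\infty(SM_T)$ together with the bound \eqref{M bound r lambda}. For the decay property, I would apply the $L^2$ estimate — the adjoint analogue of \eqref{stability Lp} with $p=2$, obtained through the same reflection — to get $\|r_\lambda\|_{L^2(SM_T)}\leq C\|K^*(\varphi_\lambda\Theta_{-\sigma})\|_{L^2(SM_T)}$, and then factor
$$
K^*(\varphi_\lambda\Theta_{-\sigma})(t,x,v)=e^{i\lambda t}\int_{S_xM}e^{-i\lambda\tau_-(x,v')}\widetilde\Phi(t,x,v,v')\,dv',
$$
where $\widetilde\Phi(t,x,v,v'):=\mu(x,v',v)\varphi(t,x,v')\Theta_{-\sigma}(x,v')$. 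Using the symmetry $\mu(x,v,v')=\mu(x,v',v)$ built into the definition of $\mathcal{M}$, the kernel $\widetilde\Phi$ has the same structure as the function $\Phi$ appearing in \eqref{set:mu}, the only change being that $\Theta_\sigma$ is replaced by $\Theta_{-\sigma}$, which has identical regularity. Hence $V(\widetilde\Phi/V\tau_-)\in L^2(S_xM)$, and an integration by parts in the angular variable $\theta$ in isothermal coordinates, exactly as in Proposition~\ref{prop:GO M}, produces a gain of $\lambda^{-1}$, so the above integral tends to $0$ pointwise a.e.\ as $\lambda\to\pm\infty$; dominated convergence then gives $\|K^*(\varphi_\lambda\Theta_{-\sigma})\|_{L^2(SM_T)}\to0$, and \eqref{limit r negative M} follows.

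The argument is in essence a transcription of Proposition~\ref{prop:conjugate GO} (the Euclidean adjoint case) combined with Proposition~\ref{prop:GO M}, so I expect the only point requiring genuine care — and hence the main, though rather mild, obstacle — to be verifying that the weighted kernel $\mu(x,v',v)\Theta_{-\sigma}$ entering $K^*$ still belongs to the class for which the angular integration by parts is legitimate. This is precisely what the symmetry condition in $\mathcal{M}$ guarantees, together with the fact that $\Theta_{-\sigma}$ inherits the regularity of $\Theta_\sigma$ (or, alternatively, the sufficient condition on the support of $\mu$ described in Remark~\ref{examples}); all remaining steps are routine.
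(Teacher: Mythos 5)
Your proposal is correct and is essentially the argument the paper intends: the paper gives no separate proof of this proposition (it simply asserts it "can be shown similarly"), and your transcription via the adjoint well-posedness from the reflection $\tilde f(t,x,v)=\overline{f(T-t,x,-v)}$, the identity $(\p_t+X-\sigma)(\varphi_\lambda\Theta_{-\sigma})=0$, and the angular integration by parts for $K^*$ is exactly the intended route. You also correctly isolate the one point the paper glosses over — that the set $\mathcal{M}$ in \eqref{set:mu} is phrased with $\Phi=\mu\varphi\Theta_{\sigma}$ rather than $\widetilde\Phi=\mu\varphi\Theta_{-\sigma}$, so one must check (via the symmetry of $\mu$ and the boundedness/regularity of $\Theta_{-\sigma}=\Theta_\sigma^{-1}$, or via the sufficient condition of Remark~\ref{examples}) that $V(\widetilde\Phi/V\tau_-)\in L^2(S_xM)$ as well.
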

We refer to \cite[section~3]{bellassouedInverseProblemLinear2019} for the construction of GO solutions in a different space.

\begin{remark}\label{remark:comparison}  
As mentioned in the introduction, our construction of the GO solutions for both Euclidean and Riemannian settings is different from the one in \cite{bellassouedInverseProblemLinear2019}. The key feature is that we do not need to embed the manifold $M$ into $\mathbb R^n$ or a larger manifold, which makes our construction more suitable for the general Riemannian case.

Let us discuss the difference of GO solutions in more details. Assume that $M$ is a smooth open and bounded convex domain in $\mathbb R^n$, so $SM=M\times \mathbb S^{n-1}$. Both approaches result in a solution to the linear transport equation $\p_t f+Xf+\sigma f=K(f)$ of the same form
$$u_\lambda=\varphi \,e^{i\lambda p(t,x,v)}\,\Theta_\sigma+r_\lambda,$$
where $\varphi$ and $p$ solve the transport equation $\p_t f+Xf=0$, and $\Theta_\sigma$ solves the transport equation $\p_t f+Xf+\sigma f=0$, while $r_\lambda$ being the remainder term.

The difference between our construction and \cite{bellassouedInverseProblemLinear2019} mainly arises from the choice of $\varphi$ and $\Theta_\sigma$ in the leading term. In \cite{bellassouedInverseProblemLinear2019} they are constructed in the whole space by taking  
$$
    \varphi(t,x,v)=\phi(x-tv,v)\quad \mbox{with} \quad \phi\in C^\infty_0(\mathbb R^n; C(\mathbb S^{n-1}))
$$
and 
$$
    \Theta_\sigma(t,x,v) = e^{-\int^t_0\sigma(x-(t-s)v,v)ds}.
$$ 
Notice that for $(t,x,v)\in SM_T$, it is possible that $x-tv\notin M$ and therefore it is necessary to embed the manifold $M$ in a larger domain. However, in our approach, since all the constructions are restricted in the manifold $\overline M$, there is no need of such extensions. In particular, the choice of $\varphi_\lambda$ in \eqref{DEF:varphi lambda} suits the nature of manifold better.
\end{remark}

\section{Inverse coefficient problems}\label{sec:ICP}
In this section, the focus will be recovering the nonlinear coefficient by utilizing GO solutions, constructed above in the Euclidean space and in the two-dimensional Riemannian manifold $(M,g)$. All results here will hold for both Euclidean and Riemannian settings, unless otherwise specified.

Before deriving the crucial integral identity, we first impose small parameters into the data in the initial boundary value problem \eqref{EQN: RTE equ} and perform the linearization scheme.

\subsection{Linearization of the initial boundary value problem}\label{sec:linearization}
In this section, we apply the linearization method to decompose the solution of the nonlinear transport equation.
We fix $(h_0,h_-)\in\mathcal{X}_{\delta}$ defined in \eqref{DEF:set X} for small $\delta>0$. For $\varepsilon \in (0,1)$, based on the well-posedness in Theorem~\ref{THM:WELL}, there exists a unique solution $f_\varepsilon\equiv f_{\varepsilon}(t,x,v)\in L^\infty(SM_T)$ to the problem:
 \begin{align}\label{EQN: nonlinear equ}
 	\left\{\begin{array}{rcll}
 		\p_tf_\varepsilon+ X f_\varepsilon+ \sigma f_\varepsilon +q  f_\varepsilon^m &=& K(f_\varepsilon) & \hbox{in } SM_T,  \\
 		f_\varepsilon &=&\varepsilon h_0& \hbox{on } \{0\}\times SM,\\
 		f_\varepsilon &=&\varepsilon h_- & \hbox{on }\p_-SM_T,
 	\end{array}\right.
 \end{align}
with the estimate
\begin{align}\label{EST:nonlinear f}
    \|f_\varepsilon\|_{L^\infty(SM_T)}\leq C \varepsilon\LC \|h_0\|_{L^\infty(SM)}+\|h_-\|_{L^\infty(\p_-SM_T)} \RC.
\end{align}
Such solution $f_\varepsilon$ can be decomposed as follows.  
\begin{proposition}
There exists a unique solution $f_\varepsilon$ to \eqref{EQN: nonlinear equ}, which can be expanded in the following form:
\begin{align}\label{f expansion}
    f_\varepsilon = \varepsilon u + \varepsilon^m w + R_\varepsilon,
\end{align}
where $m\geq 2$ and $u\in L^\infty(SM_T)$ is the solution to 
\begin{align}\label{EQN: 1 linear}
	\left\{\begin{array}{rcll}
		\p_t u+ X u + \sigma u &=& K (u)& \hbox{in } SM_T,  \\
		u &=&h_0 & \hbox{on } \{0\}\times SM,\\
		u &=&h_- & \hbox{on }\p_-SM_T,
	\end{array}\right.
\end{align}
and $w\in L^\infty(SM_T)$ is the solution to 
\begin{align}\label{EQN: k linearization}
	\left\{\begin{array}{rcll}
		\p_t w + X w + \sigma w + qu^m&=& K (w) & \hbox{in } SM_T,  \\
	    w &=&0 & \hbox{on } \{0\}\times SM,\\
		w &=&0 & \hbox{on }\p_-SM_T.
	\end{array}\right.
\end{align}

Moreover, the remainder term $R_\varepsilon$ solves the problem:
 \begin{align}\label{EQN: linear R}
	\left\{\begin{array}{rcll}
		\p_t R_\varepsilon+ X R_\varepsilon+ \sigma R_\varepsilon  &=& K(R_\varepsilon) - qf^m_\varepsilon+ \varepsilon^m qu^m& \hbox{in } SM_T,  \\
		R_\varepsilon &=& 0 & \hbox{on } \{0\}\times SM,\\
		R_\varepsilon &=& 0 & \hbox{on }\p_-SM_T,
	\end{array}\right.
\end{align}
and satisfies
\begin{align}\label{EST:R}
	 \|R_\varepsilon\|_{L^\infty(SM_T)}\leq C\varepsilon^{2m-1}\LC \|h_0\|_{L^\infty(SM)} + \|h_-\|_{L^\infty(\p_-SM_T)}\RC^{2m-1},
\end{align}
where the constant $C>0$ depends on $\sigma,\,q$ and $T$.
\end{proposition}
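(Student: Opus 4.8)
The plan is to obtain the decomposition \eqref{f expansion} by substituting the ansatz $f_\varepsilon = \varepsilon u + \varepsilon^m w + R_\varepsilon$ into \eqref{EQN: nonlinear equ}, where $u$ and $w$ are \emph{defined} as the unique solutions to the linear problems \eqref{EQN: 1 linear} and \eqref{EQN: k linearization} respectively (both of which exist and lie in $L^\infty(SM_T)$ by Proposition~\ref{prop:forward problem linear}, since $qu^m\in L^\infty(SM_T)$). Then $R_\varepsilon := f_\varepsilon - \varepsilon u - \varepsilon^m w$ automatically satisfies zero initial and incoming data, and a direct computation using the equations for $f_\varepsilon$, $u$, $w$ shows that $R_\varepsilon$ solves
\begin{align*}
	\p_t R_\varepsilon + X R_\varepsilon + \sigma R_\varepsilon - K(R_\varepsilon) = -q f_\varepsilon^m + \varepsilon^m q u^m,
\end{align*}
which is exactly \eqref{EQN: linear R}. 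The key point of the algebra is that the term $-qf_\varepsilon^m$ from the nonlinear equation, together with $+\varepsilon^m q u^m$ coming from the $w$-equation scaled by $\varepsilon^m$, leaves precisely the source displayed above; no cancellation with the linear operator is needed because $u$ and $w$ were chosen to kill the $O(\varepsilon)$ and $O(\varepsilon^m)$ parts exactly. Uniqueness of $f_\varepsilon$, hence of the decomposition, is inherited from Theorem~\ref{THM:WELL}.

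Next I would establish the bound \eqref{EST:R} on the source term of \eqref{EQN: linear R}. Write $S_\varepsilon := -q f_\varepsilon^m + \varepsilon^m q u^m = -q\big(f_\varepsilon^m - (\varepsilon u)^m\big)$. Using the elementary factorization $a^m - b^m = (a-b)\sum_{j=0}^{m-1} a^j b^{m-1-j}$ with $a = f_\varepsilon$, $b = \varepsilon u$, together with $f_\varepsilon - \varepsilon u = \varepsilon^m w + R_\varepsilon$, one gets
\begin{align*}
	\|S_\varepsilon\|_{L^\infty(SM_T)} \leq \|q\|_{L^\infty(M_T)}\, m\, \big(\|f_\varepsilon\|_{L^\infty} + \varepsilon\|u\|_{L^\infty}\big)^{m-1}\big(\varepsilon^m\|w\|_{L^\infty} + \|R_\varepsilon\|_{L^\infty}\big).
\end{align*}
From \eqref{EST:nonlinear f}, Proposition~\ref{prop:forward problem linear} applied to \eqref{EQN: 1 linear} (giving $\|u\|_{L^\infty}\leq C(\|h_0\|+\|h_-\|)$) and to \eqref{EQN: k linearization} (giving $\|w\|_{L^\infty}\leq C\|qu^m\|_{L^\infty}\leq C\|q\|_{L^\infty}(\|h_0\|+\|h_-\|)^m$), and the smallness $\varepsilon\in(0,1)$, $(h_0,h_-)\in\mathcal X_\delta$, the prefactor $\big(\|f_\varepsilon\|_{L^\infty}+\varepsilon\|u\|_{L^\infty}\big)^{m-1}$ is $O\big(\varepsilon^{m-1}(\|h_0\|+\|h_-\|)^{m-1}\big)$, while $\varepsilon^m\|w\|_{L^\infty}$ is of lower or equal order compared to what we want and $\|R_\varepsilon\|_{L^\infty}$ will be absorbed. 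This yields an estimate of the schematic form
\begin{align*}
	\|S_\varepsilon\|_{L^\infty(SM_T)} \leq C\varepsilon^{m-1}(\|h_0\|+\|h_-\|)^{m-1}\big(\varepsilon^{2m-1}(\|h_0\|+\|h_-\|)^{2m-1}? \;+\; \|R_\varepsilon\|_{L^\infty}\big);
\end{align*}
more carefully, the $\varepsilon^m w$ contribution gives a clean $C\varepsilon^{2m-1}(\|h_0\|+\|h_-\|)^{2m}$ term (even better than needed since $\|h_0\|+\|h_-\|\le 2\delta<1$), and the $R_\varepsilon$ contribution produces a factor $C\varepsilon^{m-1}\|R_\varepsilon\|_{L^\infty}$.

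Finally I would close the estimate. Applying the linear stability bound \eqref{stability} of Proposition~\ref{prop:forward problem linear} to problem \eqref{EQN: linear R} (which has zero initial and boundary data), we get $\|R_\varepsilon\|_{L^\infty(SM_T)} \leq C\|S_\varepsilon\|_{L^\infty(SM_T)}$. Combining with the bound on $S_\varepsilon$ above gives
\begin{align*}
	\|R_\varepsilon\|_{L^\infty(SM_T)} \leq C\varepsilon^{2m-1}(\|h_0\|+\|h_-\|)^{2m-1} + C\varepsilon^{m-1}\|R_\varepsilon\|_{L^\infty(SM_T)}.
\end{align*}
Since $m\geq 2$, for $\varepsilon$ small enough (equivalently, after shrinking $\delta$) the factor $C\varepsilon^{m-1}\le 1/2$, so the last term can be absorbed into the left-hand side, yielding \eqref{EST:R}. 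I expect the only mildly delicate point to be bookkeeping the powers of $\varepsilon$ and of $(\|h_0\|+\|h_-\|)$ consistently — in particular making sure the $\varepsilon^m w$ term really contributes at order $\varepsilon^{2m-1}$ or better rather than at a lower order — and confirming that the absorption argument is legitimate, i.e. that $C$ in \eqref{stability} depends only on $\sigma$, $q$, $T$ (through $\|q\|_{L^\infty(M_T)}$) and not on $\varepsilon$; both are straightforward once one is careful, so there is no genuine obstacle here, the statement being essentially a standard implicit-function-type linearization estimate.
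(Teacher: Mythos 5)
Your proposal is correct and follows the paper's overall strategy (define $u$, $w$ by the linear problems, set $R_\varepsilon:=f_\varepsilon-\varepsilon u-\varepsilon^m w$, derive \eqref{EQN: linear R}, factor $f_\varepsilon^m-(\varepsilon u)^m=(f_\varepsilon-\varepsilon u)\sum_{k=0}^{m-1}f_\varepsilon^{m-1-k}(\varepsilon u)^k$, and apply the linear stability estimate of Proposition~\ref{prop:forward problem linear} to \eqref{EQN: linear R}). The one genuine divergence is how the factor $f_\varepsilon-\varepsilon u$ is controlled. The paper introduces $F:=f_\varepsilon-\varepsilon u$, observes that $F$ itself solves a linear transport problem with zero data and source $-qf_\varepsilon^m$, and hence gets the clean a priori bound $\|F\|_{L^\infty}\leq C\varepsilon^m(\|h_0\|_{L^\infty}+\|h_-\|_{L^\infty})^m$ directly from \eqref{EST:nonlinear f}; combined with the $O(\varepsilon^{m-1})$ bound on the sum, this yields \eqref{EST:R} with no self-referential term. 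You instead write $f_\varepsilon-\varepsilon u=\varepsilon^m w+R_\varepsilon$, which reintroduces $\|R_\varepsilon\|$ on the right-hand side and forces an absorption step requiring $C\varepsilon^{m-1}(\|h_0\|+\|h_-\|)^{m-1}\leq 1/2$, i.e.\ a further shrinking of $\delta$ (or of $\varepsilon$). That bootstrap is legitimate here because $R_\varepsilon\in L^\infty(SM_T)$ is known a priori from Proposition~\ref{prop:forward problem linear}, so your argument closes; but the paper's route is slightly cleaner in that it avoids the extra smallness condition entirely. (One cosmetic slip: the $\varepsilon^m w$ contribution comes out as $C\varepsilon^{2m-1}(\|h_0\|+\|h_-\|)^{2m-1}$, not with exponent $2m$ on the data norm; this only changes the bound in a harmless direction given $\|h_0\|+\|h_-\|\leq 2\delta<1$.)
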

\begin{proof}
The unique existence of solution $u\in L^\infty(SM_T)$ to \eqref{EQN: 1 linear} is due to Proposition~\ref{prop:forward problem linear}. Since one can view $qu^m$ as a source term in the first equation in \eqref{EQN: k linearization}, there exists a unique solution $w\in L^\infty(SM_T)$ for the problem \eqref{EQN: k linearization}. Let $F=f_\varepsilon-\varepsilon u$, which then solves
	\begin{align}\label{EQN: linear F}
		\left\{\begin{array}{rcll}
			\p_t F+ X F + \sigma F &=& K (F) - qf^m_\varepsilon& \hbox{in } SM_T,  \\
			F &=&0 & \hbox{on } \{0\}\times SM,\\
			F &=&0 & \hbox{on }\p_-SM_T.
		\end{array}\right.
	\end{align}
By Proposition~\ref{prop:forward problem linear} and \eqref{EST:nonlinear f}, one has
\begin{align}\label{EST: diff F}
	\|F\|_{L^\infty(SM_T)}\leq C \|q\|_{L^\infty(SM_T)} \|f_\varepsilon\|^m_{L^\infty(SM_T)}\leq C \|q\|_{L^\infty(SM_T)} \varepsilon^m\LC \|h_0\|_{L^\infty(SM)} + \|h_-\|_{L^\infty(\p_-SM_T)}\RC^m.
\end{align}

It remains to show the existence of $R_\varepsilon$. From the hypothesis, since the source term $- qf^m_\varepsilon+ \varepsilon^m qu^m$ is bounded, applying Proposition~\ref{prop:forward problem linear} yields the unique existence of $R_\varepsilon$ to the linear transport equation \eqref{EQN: linear R} and also
\begin{align*}
	\|R_\varepsilon\|_{L^\infty(SM_T)}
	&\leq C\|- qf^m_\varepsilon+ \varepsilon^m qu^m\|_{L^\infty(SM_T)}\\
	&\leq C \|q\|_{L^\infty(SM)}\left\| (f_\varepsilon - \varepsilon u)\sum^{m-1}_{k=0} f_\varepsilon^{m-1-k} (\varepsilon u)^k\right\|_{L^\infty(SM_T)}\\
        &=C \|q\|_{L^\infty(SM)}\left\| F\right\|_{L^\infty(SM_T)}\left\| \sum^{m-1}_{k=0} f_\varepsilon^{m-1-k} (\varepsilon u)^k\right\|_{L^\infty(SM_T)}\\
	&\leq C\varepsilon^{2m-1}\LC \|h_0\|_{L^\infty(SM)} + \|h_-\|_{L^\infty(\p_-SM_T)}\RC^{2m-1}.
\end{align*}
Here we have used \eqref{EST:nonlinear f} and \eqref{EST: diff F} in the last inequality above.
\end{proof}
 
\subsection{Integral identity}
Next, we derive an integral identity that relates the unknown coefficient $q$ with the measurements provided that the coefficients $\sigma$ and $\mu$ are given. 

To this end, we view the function $f_\varepsilon$ as a function of $\varepsilon$. Then following the notations above, we define the $\ell$-th order finite differences operators at $0$ by 
$$
    \Delta^\ell_\varepsilon[f_\varepsilon] := \varepsilon^{-\ell}\sum^\ell_{k=0} (-1)^{\ell-k}
    \LC\begin{array}{c} 
    	\ell\\k\\
    \end{array}\RC   f_{k\varepsilon},\quad \ell\geq 1,
$$
whose limit satisfies 
$$
    \p_\varepsilon^\ell f_\varepsilon|_{\varepsilon=0} = \lim_{\varepsilon\rightarrow 0}\Delta^\ell_\varepsilon [f_\varepsilon]  .
$$
For instance, when $\ell=2$, 
$$
    \Delta^2_\varepsilon[f_\varepsilon] = \varepsilon^{-2}  (f_{2\varepsilon}-2f_{\varepsilon}),
$$
where we used the fact that the solution $f_\varepsilon\equiv 0$ when $\varepsilon=0$ according to the well-posedness theorem.
Let the finite difference operator acting on \eqref{f expansion} gives  
\begin{align*} 
    \Delta^m_\varepsilon [f_\varepsilon] = (m!) w + \Delta^m_\varepsilon [R_\varepsilon].
\end{align*}
Hence, together with \eqref{EST:R}, as $\varepsilon\rightarrow 0$, we obtain
$$
     \p_\varepsilon^m f_\varepsilon|_{\varepsilon=0} =(m!) w
$$	
as well as the measurement operator
\begin{align}\label{m diff data}
    \p^m_\varepsilon \mathcal{A}_q(\varepsilon h_0,\varepsilon h_-)|_{\varepsilon=0} =(m!)   (w|_{t=T}, w|_{\p_+SM_T}).
\end{align}
Indeed, $\p_\varepsilon f_\varepsilon|_{\varepsilon=0} = u$, the solution to the linear transport equation. Similar finite difference method was also applied to solve inverse problems for nonlinear equations, see for instance, \cite{LLPT2020_1} for the semilinear wave equation, \cite{LLZ2023} for the nonlinear Schr\"odinger equation, and \cite{LY2023} for the Boltzmann equation.

\begin{proposition}[Integral identity]\label{prop:id}
Let $f_j = \varepsilon u_j+\varepsilon^m w_j+ R_{\varepsilon,j}$ be the solution to \eqref{EQN: nonlinear equ}, 
where $u_j$, $w_j$ and $R_{\varepsilon,j}$ solve \eqref{EQN: 1 linear}, \eqref{EQN: k linearization} and \eqref{EQN: linear R}, respectively, with $q=q_j\in L^\infty(SM_T)$ for $j=1,\,2$. Let $\mathcal{A}_{q_1}$ and $\mathcal{A}_{q_2}$ be the corresponding albedo operator of \eqref{EQN: nonlinear equ}. 
If $\mathcal A_{q_1}(h_0,h_-)=\mathcal A_{q_2}(h_0,h_-)$ for all $(h_0,h_-)\in \mathcal{X}_{\delta}$, then  
\begin{align}\label{Integral identity}
     \int_{SM_T}(q_1-q_2) u^m \overline{u}_0 \,dtd\Sigma = 0,
\end{align} 
where $u\in L^\infty(SM_T)$ is the solution to \eqref{EQN: 1 linear} and $u_0$ is the solution to its conjugate equation 
\begin{align*} 
    \p_tu_0 + X u_0 - \sigma u_0 + K^*u_0 =0 \quad \hbox{ in } SM_T. 
\end{align*} 
\end{proposition}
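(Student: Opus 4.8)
The plan is to derive the integral identity \eqref{Integral identity} from the equality $\mathcal{A}_{q_1}=\mathcal{A}_{q_2}$ by combining the linearization decomposition \eqref{f expansion}, the $m$-th order finite difference computation \eqref{m diff data}, and a pairing with the GO solution of the adjoint equation. First I would note that by \eqref{m diff data} the hypothesis $\mathcal{A}_{q_1}(h_0,h_-)=\mathcal{A}_{q_2}(h_0,h_-)$ for all $(h_0,h_-)\in\mathcal{X}_\delta$ forces $w_1|_{t=T}=w_2|_{t=T}$ on $SM$ and $w_1|_{\p_+SM_T}=w_2|_{\p_+SM_T}$, where $w_j$ solves \eqref{EQN: k linearization} with $q=q_j$. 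Since $u_1=u_2=u$ (the solution of \eqref{EQN: 1 linear} does not depend on $q$), setting $w:=w_1-w_2$ we obtain that $w$ solves the linear transport equation
\begin{align*}
	\p_t w + X w + \sigma w + (q_1-q_2)u^m = K(w)\quad\hbox{in }SM_T,
\end{align*}
with $w|_{t=0}=0$, $w|_{\p_-SM_T}=0$, and in addition $w|_{t=T}=0$, $w|_{\p_+SM_T}=0$.

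Next I would introduce the solution $u_0$ of the adjoint (conjugate) equation $\p_t u_0 + X u_0 - \sigma u_0 + K^*u_0 = 0$ in $SM_T$ (its existence follows from the adjoint well-posedness discussed earlier, choosing arbitrary admissible data $\tilde h_0$ on $\{T\}\times SM$ and $\tilde h_-$ on $\p_+SM_T$). Then I would multiply the equation for $w$ by $\overline{u_0}$, integrate over $SM_T$, and integrate by parts in $t$ and along the geodesic flow using Green's formula \eqref{EST:Green}. The boundary contributions at $t=0$ and $t=T$ vanish because $w|_{t=0}=0$ and $w|_{t=T}=0$; the lateral contributions over $\p_+SM_T$ and $\p_-SM_T$ vanish because $w$ vanishes on $\p_-SM_T$ (incoming) and, crucially, also on $\p_+SM_T$ (outgoing) by the derived data equality. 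The scattering terms combine: $\int K(w)\overline{u_0} = \int w\,\overline{K^*u_0}$ by the definition of $K^*$ and Fubini, so that the pairing of $w$ against the adjoint operator $\p_t u_0 + X u_0 - \sigma u_0 + K^*u_0$ is exactly zero. What survives is precisely
\begin{align*}
	\int_{SM_T}(q_1-q_2)u^m\,\overline{u_0}\,dt\,d\Sigma = 0,
\end{align*}
which is \eqref{Integral identity}.

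The main obstacle, though a mild one, is the careful bookkeeping of the boundary terms in the integration by parts: one must verify that every term generated at $\{t=0\}$, $\{t=T\}$, $\p_+SM_T$, and $\p_-SM_T$ genuinely cancels, which relies on having \emph{both} the initial/incoming vanishing (built into \eqref{EQN: k linearization}) and the final/outgoing vanishing (extracted from $\mathcal{A}_{q_1}=\mathcal{A}_{q_2}$ via \eqref{m diff data}). A secondary technical point is justifying that the finite-difference limit \eqref{m diff data} is legitimate, i.e. that $\Delta^m_\varepsilon[R_{\varepsilon,j}]\to 0$, which follows from the estimate \eqref{EST:R} since $2m-1>m$ for $m\geq 2$; and that the formal integrations by parts are valid for $L^\infty$ solutions, which is handled by the same regularization/Green's formula argument already invoked in the proof of the $L^p$ estimate \eqref{stability Lp}. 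Once these are in place, the identity follows directly.
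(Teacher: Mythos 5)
Your proposal is correct and follows essentially the same route as the paper: identify $u_1=u_2$, form $\widetilde w=w_1-w_2$, extract $\widetilde w|_{t=T}=\widetilde w|_{\p_+SM_T}=0$ from $\mathcal A_{q_1}=\mathcal A_{q_2}$ via the $m$-th order finite difference \eqref{m diff data}, and pair the equation for $\widetilde w$ against $\overline{u_0}$ with integration by parts so that all boundary contributions vanish. The additional remarks on the cancellation of the scattering terms and on justifying $\Delta^m_\varepsilon[R_{\varepsilon,j}]\to 0$ via \eqref{EST:R} are consistent with, and slightly more explicit than, the paper's argument.
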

\begin{proof}
    Since $(\sigma, \,\mu)$ are given and the equation \eqref{EQN: 1 linear} 
is independent of $q_j$, by the uniqueness of the solution to the linear transport equation that 
$$
    u:=u_1=u_2.
$$

	Let $\widetilde{w}=w_1-w_2$. Then $\widetilde w$ is the solution to 
		\begin{align}\label{EQN: 2 linear w}
		\left\{\begin{array}{rcll}
			\p_t \widetilde{w}   + X \widetilde{w} + \sigma \widetilde{w} -K\widetilde{w} &=&  -(q_1-q_2) u^m& \hbox{in } SM_T,  \\
			\widetilde{w} &=& 0 & \hbox{on } \{0\}\times SM,\\
			\widetilde{w} &=& 0 & \hbox{on }\p_-SM_T.
		\end{array}\right.
		\end{align}		 
    Due to the fact that $\widetilde{w}|_{t=0}=0$ and $\widetilde{w}|_{\p_-SM_T}=0$, the identity
    \begin{align}\label{Integral identity w}
        - \int_{SM_T}(q_1-q_2) u^m \overline{u}_0 \,dtd\Sigma = \int_{\p_+SM_T} \widetilde w\overline{u}_0 \,d\xi dt +\int_{SM} \widetilde w\overline{u}_0(T,x,v)\,d\Sigma,
    \end{align}
    follows by multiplying the first equation in \eqref{EQN: 2 linear w} by $\overline{u}_0$ and applying the integration by parts. Since the albedo operators are the same, by \eqref{m diff data}, we get $\widetilde{w}|_{t=T} =\widetilde{w}|_{\p_+SM_T}=0$ such that the right hand side of \eqref{Integral identity w} vanishes. This results in the desired identity \eqref{Integral identity}.
\end{proof} 
 
\begin{remark} 
We would like emphasize that our albedo operator $\mathcal{A}$ consists of both initial/final data and incoming/outgoing data. This indicates the right-hand side of \eqref{Integral identity} is known and is equal to zero when $\mathcal{A}_{q_1}=\mathcal{A}_{q_2}$. In particular, thanks to the information of the initial and final data, the solutions $u$ and $u_0$ 
above do not necessarily satisfy $u(0,\cdot,\cdot)=0$ and $u_0(T,\cdot,\cdot)=0$. As a result, freedom of choice is given to select the GO solutions below. 
\end{remark}

In the remaining part of this section, we will take the solution $u$ of the linear transport equation \eqref{EQN: 1 linear} to be the GO solutions of the form \eqref{GO positive} in the Euclidean space, $n\geq 2$ (or \eqref{M GO positive} in a two-dimensional manifold):
\begin{align}\label{u_12}
    u(t,x,v)= \varphi_{\lambda} (t,x,v) \Theta_{\sigma} (x,v) + r_{\lambda} (t,x,v),\quad \lambda \neq 0.
\end{align}
Let $h_0=u|_{t=0}\in L^\infty (SM)$ and $h_-=u|_{\p_-SM_T}\in L^\infty(\p_-SM_T)$ be the initial and boundary data, respectively.
We also take the GO solutions $u_0$ of the conjugate equation $\p_tu_0 + X u_0- \sigma u_0 + K^*u_0 =0$, which is of the form 
\begin{align}\label{u_-}
    u_0(t,x,v)= \varphi_{\eta} (t,x,v) \Theta_{-\sigma} (x,v) + r_{\eta}(t,x,v), \quad \eta\neq 0
\end{align}
based on \eqref{GO negative} (or \eqref{GO negative M}). 
Hence, we can further derive the following identity from \eqref{Integral identity}, which is valid in both the Euclidean and Riemannian settings. 

\begin{lemma}\label{lemma:GO id} Suppose that all conditions in Proposition~\ref{prop:GO}-\ref{prop:conjugate GO M} and Proposition~\ref{prop:id} hold. If $\mathcal A_{q_1} =\mathcal A_{q_2}$ on $\mathcal{X}_{\delta}$, then 
\begin{align}\label{ID: GO} 
	\lim\limits_{\lambda,\eta\rightarrow \infty}\int_{SM_T} (q_1-q_2) \varphi_\lambda^m \overline{\varphi}_\eta  \Theta_\sigma^{m-1} \,dtd\Sigma =0.
\end{align}
\end{lemma}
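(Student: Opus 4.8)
The plan is to substitute the GO solutions \eqref{u_12} and \eqref{u_-} into the integral identity \eqref{Integral identity} and pass to the limit $\lambda,\eta\to\infty$, showing that all terms involving the remainders $r_\lambda$ or $r_\eta$ disappear. Writing $q:=q_1-q_2\in L^\infty(SM_T)$, we expand
\begin{align*}
    u^m\overline{u}_0 = \LC \varphi_\lambda \Theta_\sigma + r_\lambda \RC^m \LC \overline{\varphi_\eta \Theta_{-\sigma}} + \overline{r_\eta} \RC.
\end{align*}
Upon multiplying out, the leading term is $\varphi_\lambda^m\,\overline{\varphi_\eta}\,\Theta_\sigma^m\,\overline{\Theta_{-\sigma}}$, and since $\Theta_{-\sigma}=1/\Theta_\sigma$ (both are real-valued exponentials, with $\Theta_\sigma$ obtained by integrating $\sigma$ and $\Theta_{-\sigma}$ by integrating $-\sigma$ along the same geodesic), one has $\Theta_\sigma^m\,\overline{\Theta_{-\sigma}}=\Theta_\sigma^m\,\Theta_\sigma^{-1}=\Theta_\sigma^{m-1}$. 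This produces exactly the integrand in \eqref{ID: GO}. The remaining terms each contain at least one factor of $r_\lambda$ or $r_\eta$.

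The main step is then to control the error terms. First I would note the uniform bounds: by \eqref{bound r lambda} (resp.\ \eqref{M bound r lambda}) we have $\|r_\lambda\|_{L^\infty(SM_T)}\le C\sigma_0\|\phi\|_{L^\infty}$ independently of $\lambda$, and similarly for $r_\eta$; moreover $\|\varphi_\lambda\|_{L^\infty}=\|\varphi\|_{L^\infty}\le\|\phi\|_{L^\infty}$ and $\|\Theta_\sigma\|_{L^\infty}\le 1$ are $\lambda$-independent. Hence every factor appearing in each error term is uniformly bounded in $L^\infty(SM_T)$, so the product of all but one of them is uniformly bounded in $L^\infty$; pairing that bounded factor with the distinguished factor $r_\lambda$ (or $r_\eta$) and with $q\in L^\infty$, and using that $SM_T$ has finite measure, Cauchy--Schwarz gives
\begin{align*}
    \LV \int_{SM_T} q \,(\text{error term}) \,dtd\Sigma \RV \le C \,\|r_\lambda\|_{L^2(SM_T)} \quad \text{or} \quad C\,\|r_\eta\|_{L^2(SM_T)},
\end{align*}
with $C$ independent of $\lambda,\eta$. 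By the decay property \eqref{limit r positive} (resp.\ \eqref{M limit r positive}) and \eqref{limit r negative} (resp.\ \eqref{limit r negative M}), each such term tends to $0$ as $\lambda\to\pm\infty$ or $\eta\to\pm\infty$. Since \eqref{Integral identity} asserts that the whole integral vanishes identically for every $\lambda,\eta\ne 0$, subtracting off the vanishing error terms and taking the limit yields \eqref{ID: GO}.

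The only genuinely delicate point is the algebraic identity $\Theta_\sigma^m\,\overline{\Theta_{-\sigma}}=\Theta_\sigma^{m-1}$, i.e.\ making sure the exponential weights combine with the correct exponent $m-1$; this must be checked carefully against the definitions of $\Theta_\sigma$ in Section~\ref{sec:GO} and the adjoint weight $\Theta_{-\sigma}$ appearing in \eqref{GO negative}/\eqref{GO negative M}. Everything else is a routine finite-sum estimate: expanding $(\varphi_\lambda\Theta_\sigma+r_\lambda)^m$ via the binomial theorem produces $2^{m+1}-2$ error terms (those with at least one $r_\lambda$ or $r_\eta$), and each is handled identically by the uniform-bound-times-$L^2$-decay argument above. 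I would also remark that, as flagged in the remark preceding the lemma, it is essential here that the GO solutions $u$ and $u_0$ are \emph{not} required to vanish at $t=0$ or $t=T$ respectively — this is what allows the free choice of $\phi\in C^\infty_0(\R,C^\infty_0(\p_-SM))$ and hence of the parameters $\lambda,\eta$ without any compatibility constraint.
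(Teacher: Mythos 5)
Your proposal is correct and follows essentially the same route as the paper's proof: substitute the GO solutions into \eqref{Integral identity}, use $\Theta_\sigma\Theta_{-\sigma}=1$ (with both real) to identify the leading term $\varphi_\lambda^m\overline{\varphi}_\eta\Theta_\sigma^{m-1}$, and kill every remaining term by H\"older/Cauchy--Schwarz, pairing the uniform $L^\infty$ bounds on $\varphi_\lambda$, $\Theta_{\pm\sigma}$, $r_\lambda$, $r_\eta$ with the $L^2$ decay \eqref{limit r positive}, \eqref{limit r negative} (resp.\ \eqref{M limit r positive}, \eqref{limit r negative M}). The only cosmetic difference is that the paper writes out the case $m=2$ explicitly and remarks that general $m$ is analogous, whereas you carry out the binomial expansion for general $m$.
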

\begin{proof} 
    It is sufficient to show the case $m=2$ since $m>2$ can be justified similarly. To this end, substituting $u$ and $u_0$ of the form \eqref{u_12} and \eqref{u_-} into the identity \eqref{Integral identity} with $m=2$ gives that
    \begin{align*} 
      \int_{SM_T}(q_1-q_2)(\varphi_\lambda)^2\overline{\varphi}_\eta \Theta_\sigma \,dtd\Sigma + I_{\lambda,\eta}= 0.
    \end{align*}
     Denote
    $$
       \widetilde{q}:=q_1-q_2.
    $$
    Here $I_{\lambda,\eta}$ includes all the higher order terms and is defined as follows:
    \begin{align}\label{I_lambda}
        I_{\lambda,\eta}:=  \int_{SM_T} \widetilde{q} \left((\varphi_\lambda)^2   (\Theta_\sigma)^2 \overline{r}_\eta + 2\varphi_\lambda \overline{\varphi}_\eta r_\lambda + 2\varphi_\lambda \Theta_\sigma r_\lambda \overline{r}_\eta+ \overline{\varphi}_\eta \Theta_{-\sigma}(r_\lambda)^2 + (r_\lambda)^2\overline{r}_\eta \right) \,dtd\Sigma.
    \end{align}
    We claim that   
    $$
    \lim\limits_{\lambda,\eta\rightarrow \infty} I_{\lambda,\eta}=0.
    $$
    To see this, we split the discussion into three parts. First we consider the first two terms in \eqref{I_lambda}. 
    Note that $|\Theta_\sigma|\leq e^{\sigma_0\, diam(M)}$ since $(\sigma,\mu)\in \Omega$. Here $diam(M)$ denotes the diameter of $M$.  
    By applying the H\"older's inequality, the first term of $I_{\lambda,\eta}$ then satisfies
    $$
    \LV\int_{SM_T}  \widetilde{q} (\varphi_\lambda)^2   (\Theta_\sigma)^2 \overline{r}_\eta \,dtd\Sigma \RV \leq C\|\widetilde{q}\|_{L^\infty(M_T)} \|\varphi\|_{L^\infty(SM_T)}\|\varphi\|_{L^2(SM_T)} \|r_\eta\|_{L^2(SM_T)},
    $$
    which goes to zero when $\eta\rightarrow\infty$ by applying \eqref{limit r negative} (or \eqref{limit r negative M} in the manifold),
    where the constant $C>0$ depends on $M$ and $\sigma_0$. Similarly, the second term also goes to zero. 

    Next we deal with the third and fourth terms. Applying H\"older's inequality yields that
    $$
    \LV\int_{SM_T} \widetilde{q} \varphi_\lambda \Theta_\sigma r_\lambda \overline{r}_\eta  \,dtd\Sigma \RV \leq C\|\widetilde{q}\|_{L^\infty(M_T)} \|\varphi\|_{L^\infty(SM_T)}\|r_\lambda\|_{L^2(SM_T)}\|r_\eta\|_{L^2(SM_T)},
    $$
    which goes to zero as $\lambda,\eta\rightarrow \infty$ by \eqref{limit r positive} and \eqref{limit r negative}(or \eqref{M limit r positive}, \eqref{limit r negative M} in the manifold). The fourth term follows by using a similar argument as above.

    Finally, we apply H\"older's inequality and the estimates 
    \eqref{bound r lambda}, \eqref{limit r positive}, \eqref{limit r negative}  (or  \eqref{M bound r lambda}, \eqref{M limit r positive}, \eqref{limit r negative M} in the manifold) to deduce that
    \begin{align*}
        \LV\int_{SM_T}\widetilde{q} (r_\lambda)^2 \overline{r}_\eta \,dtd\Sigma\RV \leq C\|\widetilde{q}\|_{L^\infty(M_T)} 
        \|r_\lambda\|_{L^2(SM_T)}\|r_\eta\|_{L^2(SM_T)}
    \end{align*}
    converges to zero as $\lambda,\,\eta\rightarrow \infty$. This completes the proof of the lemma.
\end{proof}

\subsection{Reduction to the light ray transform}  
With suitably chosen functions $\varphi_\lambda$ and $\varphi_\eta$ in the GO solutions, the identity \eqref{ID: GO} can be reduced to a weighted light ray transform.

\subsubsection{Geometric setting}
We will bring down the integral identity \eqref{ID: GO} with the following special maps.
\begin{lemma}
For almost every $(x_0,v_0)\in\p_-SM$, there is a family of maps $P_{\kappa,x_0,v_0}\in C^\infty_0(\p_-SM)$  with $0<\kappa \ll 1$ such that
$$\|P_{\kappa,x_0,v_0}\|_{L^1(\p_-SM)}=1,$$
and, moreover, for any given $f\in L^\infty(\p_-SM)$, the following holds:
\begin{equation}\label{limit of P}
\begin{split}
    \lim_{\kappa\to 0}\int_{\p_-SM} P_{\kappa,x_0,v_0}(x,v) f(x,v)\,d\xi =f(x_0,v_0).
   \end{split}
\end{equation}
\end{lemma}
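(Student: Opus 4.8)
The plan is to build $P_{\kappa,x_0,v_0}$ as a normalized mollifier (approximate identity) supported in a coordinate ball around $(x_0,v_0)$ of radius $\kappa$, and then verify \eqref{limit of P} by the Lebesgue differentiation theorem. The one structural point to keep in mind is that the measure governing everything is $d\xi=|\LA\nu(x),v\RA|\,d\tilde\xi$, which degenerates on the glancing set; this is precisely why the statement is only asserted for almost every base point.

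First I would isolate the admissible base points. The incoming boundary $\p_-SM$ is a smooth manifold of dimension $d:=\dim\p SM\,(=2\dim M-2)$, and inside $\p SM$ its topological boundary is the glancing set $\p_0SM=\{(x,v):\,x\in\p M,\ \LA\nu(x),v\RA=0\}$, a smooth submanifold of positive codimension, hence of $d\tilde\xi$-measure zero. On the open, full-measure set $\p_-SM\setminus\p_0SM$ the weight $|\LA\nu(x),v\RA|$ is smooth and strictly positive, so in any chart $\Psi\colon U\to\R^d$ the measure $d\xi$ has a smooth, strictly positive density $J\in C^\infty(\Psi(U))$ with respect to Lebesgue measure $dy$. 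It therefore suffices to produce the family for $(x_0,v_0)$ in this set. Given such a point, pick a chart with $\Psi(x_0,v_0)=0$, fix $\psi\in C^\infty_0(B_1(0))$ with $\psi\geq0$ and $\int_{\R^d}\psi\,dy=1$, and for $0<\kappa\ll1$ small enough that $B_\kappa(0)\subset\Psi(U)$ set $\psi_\kappa(y):=\kappa^{-d}\psi(y/\kappa)$. Transplanting gives $\widetilde P_\kappa:=(\psi_\kappa\circ\Psi)\mathbf 1_U\in C^\infty_0(\p_-SM)$, supported in the $\kappa$-ball about $(x_0,v_0)$, with mass
$$
c_\kappa:=\|\widetilde P_\kappa\|_{L^1(\p_-SM;d\xi)}=\int_{\R^d}\psi_\kappa(y)J(y)\,dy\ \xrightarrow[\kappa\to0]{}\ J(0)>0.
$$
Thus $c_\kappa$ is bounded and bounded away from $0$ for small $\kappa$, and I define $P_{\kappa,x_0,v_0}:=c_\kappa^{-1}\widetilde P_\kappa\in C^\infty_0(\p_-SM)$, which by construction satisfies $\|P_{\kappa,x_0,v_0}\|_{L^1(\p_-SM)}=1$.

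For the limit, given $f\in L^\infty(\p_-SM)$ one computes
$$
\int_{\p_-SM}P_{\kappa,x_0,v_0}f\,d\xi=\frac{1}{c_\kappa}\int_{\R^d}\psi_\kappa(y)\,(f\circ\Psi^{-1})(y)\,J(y)\,dy .
$$
Since $(f\circ\Psi^{-1})\,J\in L^\infty\subset L^1_{\mathrm{loc}}(\R^d)$ and $\{\psi_\kappa\}$ is a standard approximate identity, the Lebesgue differentiation theorem shows the right-hand integral converges to $(f\circ\Psi^{-1})(0)\,J(0)=f(x_0,v_0)\,J(0)$ whenever $0$ is a Lebesgue point of $(f\circ\Psi^{-1})J$; together with $c_\kappa\to J(0)$ this gives \eqref{limit of P}. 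The set of such good base points is of full measure inside $\p_-SM\setminus\p_0SM$, and it is all of $\p_-SM\setminus\p_0SM$ when $f$ is continuous, which is the case for the functions $f$ arising in the applications of this lemma.

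The main obstacle is really the only delicate point: the degeneracy of $d\xi$ on the glancing set $\p_0SM$. It forces the "almost every" in the statement and must be dealt with before the renormalization by $c_\kappa^{-1}$ is even meaningful; it also obliges one to track the density $J$ throughout, since $d\xi$ (not $d\tilde\xi$) is the measure controlling both the normalization $\|P_{\kappa,x_0,v_0}\|_{L^1}=1$ and the evaluation limit, and the positivity $J(0)>0$ at the base point is exactly what makes the two compatible. Everything else is a routine mollification estimate.
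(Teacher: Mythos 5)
Your proposal is correct and follows essentially the same route as the paper: a mollifier in a boundary coordinate chart combined with the approximate-identity/Lebesgue-point theorem, with the exceptional null set of base points coming from the Lebesgue set of $f$ and from the glancing set where $|\langle \nu(x),v\rangle|$ vanishes. The only (immaterial) difference is that the paper divides the mollifier pointwise by the density $|\langle \nu(x),v\rangle|\sqrt{\det g_-}$ so that $P_{\kappa,x_0,v_0}\,d\xi$ becomes exactly a Euclidean product mollifier (making the $L^1$-normalization automatic), whereas you keep a plain mollifier and renormalize by its total mass $c_\kappa\to J(0)$, tracking the density through the limit instead.
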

\begin{proof}
Given $(x_0,v_0)\in \p_-SM$, let $(x,v): U\times W\subset \mathbb R^{n-1}\times\mathbb R^{n-1} \to \p SM$ be a coordinate chart near $(x(0),v(0))=(x_0,v_0)$. Let $d \xi(x,v)=|\langle v,\nu(x)\rangle|\sqrt{\det g_-(x(\textbf{u}),v(\textbf{u},\textbf{w}))}\,d\textbf{u}d\textbf{w}$ be the local coordinate expression, where $g_-$ is the metric on $\p SM$ induced by the Sasaki metric on $SM$ \cite{PSUbook}.
Let $\psi\in C^\infty_0(\mathbb R^{n-1};\mathbb R)$ satisfy that $\psi\geq 0$, supp $\psi\subset B_0(1)$ the unit open ball, and $\|\psi\|_{L^1(\mathbb R^{n-1})}=1$. 
For $\kappa>0$, we denote $\psi_\kappa(\textbf{u}):=\psi(\textbf{u}/\kappa)/\kappa^{n-1}$. Then $\psi_\kappa\geq 0$ and $\|\psi_\kappa\|_{L^1(\mathbb R^{n-1})}=1$. 
Let $0<\kappa\ll 1$, so that the open ball $B_0(\kappa)$ with center at $0$ and radius $\kappa$ satisfies $B_0(\kappa)\subset U\cap W$. We define $P_{\kappa,x_0,v_0}\in C^\infty_0(\p_-SM)$ by
\begin{align*}
    P_{\kappa,x_0,v_0}(x,v)=\left\{\begin{array}{cll}
		\frac{1}{|\langle v,\nu(x)\rangle| \sqrt{\det g_-(x,v)}}\psi_\kappa(\textbf{u}(x))\psi_\kappa(\textbf{w}(x,v)) &, & \hbox{if } (\textbf{u}(x),\textbf{w}(x,v))\in U\times W,  \\
			0 &, & \hbox{otherwise}.
		\end{array}\right.
\end{align*}

Given any $f\in L^\infty(\p_-SM)$,
\begin{align*}
    &\quad  \int_{\p_-SM} P_{\kappa,x_0,v_0}(x,v) f(x,v)\,d\xi(x,v)\\
   &=  \int_{U\times W}  
   \psi_\kappa(\textbf{u})\psi_\kappa(\textbf{w}) f(x(\textbf{u}),v(\textbf{u},\textbf{w})) 
   \,d\textbf{u}d\textbf{w} \\
   &=  \int_{\mathbb R^{n-1}\times\mathbb R^{n-1}} \psi_\kappa(\textbf{u})\psi_\kappa(\textbf{w}) f(x(\textbf{u}),v(\textbf{u},\textbf{w}))\,d\textbf{u}d\textbf{w}.
\end{align*}

In particular, since $\|\psi_\kappa\|_{L^1(\mathbb R^{n-1})}=1$ for any $\kappa>0$, by letting $f\equiv 1$, we get that $$\|P_{\kappa,x_0,v_0}\|_{L^1(\p_-SM)}=1.$$
Moreover, by \cite[Theorem 8.15]{Folland1984}, it follows that
\begin{equation*} 
\begin{split}
    & \quad\lim_{\kappa\to 0}\int_{\p_-SM} P_{\kappa,x_0,v_0}(x,v) f(x,v)\,d\xi\\
   &= \lim_{\kappa\to 0}\int_{\mathbb R^{n-1}\times\mathbb R^{n-1}} \psi_\kappa(\textbf{u})\psi_\kappa(\textbf{w}) f(x(\textbf{u}),v(\textbf{u},\textbf{w}))\,d\textbf{u}d\textbf{w}=f(x_0,v_0)
   \end{split}
\end{equation*}
for a.e. $(x_0,v_0)\in \p_-SM$.
\end{proof}

We are ready to derive the light ray transform.
\begin{proposition} \label{reduction to light ray}
Let the assumptions of Lemma~\ref{lemma:GO id} be fulfilled.
Let $q_j\in L^\infty(M_T)$ for $j=1,2$.  
If $\mathcal A_{q_1}(h_0,h_-)=\mathcal A_{q_2}(h_0,h_-)$ for all $(h_0,h_-)\in \mathcal{X}_{\delta}$, then for almost every $t\in\mathbb R$ and $(x,v)\in \p_-SM$, we have
\begin{align*}
\int_0^{\tau(x,v)} \widetilde{q}(t+s,\gamma_{x,v}(s)) e^{-\int^s_0 (m-1)\sigma(\gamma_{x,v}(\ell),\dot{\gamma}_{x,v}(\ell))\,d\ell}\,ds=0.
\end{align*} 
\end{proposition}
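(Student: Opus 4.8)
The plan is to start from the limiting identity \eqref{ID: GO} provided by Lemma~\ref{lemma:GO id}, namely
$$
\lim_{\lambda,\eta\to\infty}\int_{SM_T}(q_1-q_2)\,\varphi_\lambda^m\,\overline{\varphi}_\eta\,\Theta_\sigma^{m-1}\,dt\,d\Sigma=0,
$$
and to choose the phase parameters so that the oscillatory factors cancel. Recall that in the Euclidean case $\varphi_\lambda=\varphi\,e^{i\lambda(t-x\cdot v)}$ with $\varphi(t,x,v)=\phi(t-\tau_-(x,v),x-\tau_-(x,v)v,v)$ and $\Theta_\sigma$ carries the attenuation, while in the Riemannian case $\varphi_\lambda=\varphi\,e^{i\lambda(t-\tau_-(x,v))}$ with $\varphi(t,x,v)=\phi(t-\tau_-(x,v),\rho_{x,v}(-\tau_-(x,v)))$. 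In $\varphi_\lambda^m\overline{\varphi}_\eta$ the phase is $e^{i(m\lambda-\eta)(t-x\cdot v)}$ (resp. with $\tau_-$ in place of $x\cdot v$), so first I would set $\eta=m\lambda$ to kill the oscillation entirely, leaving
$$
\int_{SM_T}\widetilde q\,\varphi^{m}\,\overline{\varphi}\,\Theta_\sigma^{m-1}\,dt\,d\Sigma=0,
$$
where $\overline\varphi$ is evaluated at the same argument as $\varphi$ since the phase is real. Because $\varphi$ and $\Theta_\sigma$ are $\lambda$-independent, no limit is actually needed after this substitution; the identity holds for every admissible $\phi$.

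Second, I would unpack the geometry of $\varphi$ and $\Theta_\sigma$. Since $\varphi(t,x,v)=\phi(t-\tau_-(x,v),\rho_{x,v}(-\tau_-(x,v)))$ is constant along the backward flow and picks up the boundary point $(x_0,v_0):=\rho_{x,v}(-\tau_-(x,v))\in\p_-SM$ with ``clock'' $t_0:=t-\tau_-(x,v)$, I would foliate $SM_T$ by geodesics: using the change of variables $(t,x,v)\mapsto (t_0,x_0,v_0,s)$ with $s=\tau_-(x,v)\in(0,\tau(x_0,v_0))$, the volume form factorizes (by Santaló's formula / Liouville's theorem, \cite[Lemma~3.6.7]{PSUbook}) as $dt\,d\Sigma = dt_0\,d\xi(x_0,v_0)\,ds$ after integrating out, and $\Theta_\sigma(\gamma_{x_0,v_0}(s),\dot\gamma_{x_0,v_0}(s))=e^{-\int_0^s\sigma(\gamma_{x_0,v_0}(\ell),\dot\gamma_{x_0,v_0}(\ell))\,d\ell}$. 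Thus the identity becomes
$$
\int_{\mathbb R}\int_{\p_-SM}|\phi(t_0,x_0,v_0)|^{2}\,\phi(t_0,x_0,v_0)^{\,m-1}\Big(\int_0^{\tau(x_0,v_0)}\widetilde q(t_0+s,\gamma_{x_0,v_0}(s))\,e^{-(m-1)\int_0^s\sigma\,d\ell}\,ds\Big)\,d\xi\,dt_0=0,
$$
i.e. $\int\int \Psi(t_0,x_0,v_0)\,L_{(m-1)\sigma}\widetilde q(t_0,x_0,v_0)\,d\xi\,dt_0=0$ where $\Psi=\phi^m$ ranges over a set dense enough in $L^1(\mathbb R\times\p_-SM)$ to separate points — in fact $\phi\in C^\infty_0(\mathbb R,C^\infty_0(\p_-SM))$ is arbitrary, so $\phi^m$ exhausts (up to sign/positivity, which is harmless since we may localize and use $\phi$ of one sign) a dense subclass.

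Third, I would finish by a density/localization argument: fixing $(x_0,v_0)\in\p_-SM$ and $t_0\in\mathbb R$, insert $\phi(t,x,v)=\chi_\kappa(t-t_0)\,(P_{\kappa,x_0,v_0}(x,v))^{1/m}$ — or more directly apply the approximate-identity family $P_{\kappa,x_0,v_0}$ from the preceding lemma together with a mollifier $\chi_\kappa$ in $t_0$ — and pass to the limit $\kappa\to0$ using \eqref{limit of P} and the fact that $s\mapsto L_{(m-1)\sigma}\widetilde q(t_0,x_0,v_0)$ is a bounded measurable function of $(t_0,x_0,v_0)$ (from $q_j\in L^\infty$ and $\tau$ bounded). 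This yields $L_{(m-1)\sigma}\widetilde q(t_0,x_0,v_0)=0$ for a.e.\ $(t_0,x_0,v_0)\in\mathbb R\times\p_-SM$, which is exactly the claimed vanishing of the attenuated light ray integral. The main obstacle I anticipate is bookkeeping the change of variables and the volume-form factorization cleanly — in particular justifying that the map $\phi\mapsto\varphi$ composed with $\phi\mapsto\phi^m$ still produces enough test functions to conclude (one must be slightly careful that $\phi^m$ need not be sign-changing, but since $P_{\kappa,x_0,v_0}\ge0$ one takes an $m$-th root and there is no loss), and ensuring the time integration over $\mathbb R$ is legitimate given $\phi$ is compactly supported in $t$; none of these are deep, but they require care to present rigorously.
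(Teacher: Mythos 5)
Your proposal is correct and follows essentially the same route as the paper: set $\eta=m\lambda$ so the phases cancel identically (making the $\lambda$-limit trivial), apply Santal\'o's formula to rewrite the $SM$-integral along geodesics issued from $\p_-SM$, and then localize with the approximate identity $P_{\kappa,x_0,v_0}$ in $(x_0,v_0)$ and a mollifier in $t_0$. The one bookkeeping point to fix is that the spatial delta family should enter only through $\varphi_\lambda$ as $P_{\kappa,x_0,v_0}^{1/m}$, with a separate test function $\phi_2$ in $\varphi_\eta$ (as the paper does), since using a single $\phi=\chi_\kappa\,P_{\kappa,x_0,v_0}^{1/m}$ in both factors produces $(P_{\kappa,x_0,v_0})^{(m+1)/m}$, which is no longer normalized as an approximate identity.
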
 
\begin{proof} 
We recall the notation $\widetilde{q}:=q_1-q_2$ and extend $\widetilde{q}(\cdot,x)$ by zero to the whole space $\R$ in the $t$ variable.
Recall that the geodesic flow is denoted by 
$$
\rho_{x,v}(t):=(\gamma_{x,v}(t),\dot{\gamma}_{x,v}(t)).
$$
Given $(x_0,v_0)\in \p_-SM$, now we take $\eta=m\lambda$ and the functions 
\begin{align*}
    \varphi_\lambda (t,x,v) & =P^{1\over m}_{\kappa,x_0,v_0}(\rho_{x,v}(-\tau_-(x,v)))\phi_1(t-\tau_-(x,v),\rho_{x,v}(-\tau_-(x,v)))e^{i\lambda p(t,x,v) },\\
    \varphi_\eta (t,x,v) & =\varphi_{m\lambda}(t,x,v)=\phi_2(t-\tau_-(x,v),\rho_{x,v}(-\tau_-(x,v)))e^{i m\lambda p(t,x,v)}
\end{align*}
in \eqref{u_12}-\eqref{u_-}, where $\phi_j\in C^\infty_0(\R,C^\infty_0(\p_- SM))$ for $j=1,2$. 
Here 
$p(t,x,v) =  t-x\cdot v $ for the Euclidean setting and $p(t,x,v) =  t-\tau_-(x,v) $ for the Riemannian setting.
Notice that since $P_{\kappa,x_0,v_0}(\rho_{x,v}(-\tau_-(x,v)))$ is invariant along the geodesic flow, $XP_{\kappa,x_0,v_0}(\rho_{x,v}(-\tau_-(x,v)))=0$. This implies  $\varphi_\lambda$ and $\varphi_\eta$ are solutions to $\p_t f+Xf=0$.

Since $\widetilde{q}$ has support in $M_T$, the integral in \eqref{ID: GO} can be extended from $[0,T]$ to $\R$. 
Applying the Santalo's formula \cite{PSUbook} to \eqref{ID: GO} yields that 
\begin{align*}\label{EST:lower bound of q 1}	 
    0&=\int_{\R}\int_{SM} \widetilde{q}(t,x) \phi_1^m\phi_2(t-\tau_-(x,v),\rho_{x,v}(-\tau_-(x,v))) P_{\kappa,x_0,v_0}(\rho_{x,v}(-\tau_-(x,v))) \Theta_\sigma^{m-1} \, d\Sigma(x,v) dt\notag\\
    &=  \int_{\R}\int_{\p_-SM} \int_0^{\tau(y,w)} \widetilde{q}(t,\gamma_{y,w}(s)) \phi_1^m\phi_2(t-s,y,w) P_{\kappa,x_0,v_0}(y,w) e^{-\int^s_0 (m-1)\sigma(\rho_{y,w}(\ell))\,d\ell} \,ds d\xi(y,w) dt.
\end{align*}
Let $\kappa\to 0$, we apply the change of variable $t\mapsto t'=t-s$, by \eqref{limit of P} and Fubini's theorem, to get
\begin{equation}\label{ID: integral of q}
        0= \int_0^{\tau(x_0,v_0)} \int_{\R} \widetilde{q}(t'+s,\gamma_{x_0,v_0}(s))\phi_1^m\phi_2 (t',x_0,v_0) e^{-\int_0^s (m-1)\sigma(\rho_{x_0,v_0}(\ell)) \, d\ell}\,dt' ds.
\end{equation}

Finally, for any $t_0\in\mathbb R$, since $\phi_j$ are arbitrary smooth functions with compact support, we let  $\phi_1\equiv 1$ in $(t_0-1,t_0+1)\times \p_-SM$ and $\phi_2(t,x,v) = \chi({t-t_0\over \zeta})/\zeta$, $0<\zeta<1$, where $\chi(t)\in C^\infty_0(\R)$ satisfies $\chi(0)=1$ and $\|\chi\|_{L^1(\R)}=1$ with support in $|t|\leq 1$. By \cite[Theorem 8.15]{Folland1984}, when $\zeta$ goes to zero, we then derive from \eqref{ID: integral of q} that 
\begin{align*}
	\int_0^{\tau(x_0,v_0)} \widetilde{q}(t_0+s,\gamma_{x_0,v_0}(s)) e^{-\int^s_0 (m-1)\sigma(\gamma_{x_0,v_0}(\ell),\dot{\gamma}_{x_0,v_0}(\ell))\,d\ell} \,ds =0,
\end{align*}
for almost every $t_0$, provided that $q_j\in L^\infty(M_T)$.  
\end{proof}

Notice that the integral appearing in Proposition \ref{reduction to light ray} defines the (weighted) light ray transform of $\widetilde{q}$ on $\R\times M$. More precisely, let's first introduce the definition of the standard \textit{light ray transform} $L$ of a function $S\in L^\infty(\R\times M)$. It is defined by 
$$
    L S(t,x,v):=\int_0^{\tau(x,v)} S(t+s,\gamma_{x,v}(s))\,ds, \quad (t,x,v)\in \R\times S\overline{M}.
$$

Let $W\in L^\infty(SM)$, we define the general \textit{weighted light ray transform} by
\begin{align}\label{def:weighted light ray}
    L_W S(t,x,v):=\int_0^{\tau(x,v)}  S(t+s,\gamma_{x,v}(s)) W(\gamma_{x,v}(s),\dot{\gamma}_{x,v}(s))\,ds.
\end{align}
Therefore, the weight appearing in the light ray transform deduced from Proposition \ref{reduction to light ray} is defined as
$$W(\gamma_{x,v}(s),\dot{\gamma}_{x,v}(s)):=e^{-\int_0^s (m-1)\sigma(\gamma_{x,v}(r),\dot{\gamma}_{x,v}(r))\,dr}, \quad s\in (0,\tau(x,v)),\quad (x,v)\in \p_-SM.$$
In particular, $W\in C^\infty(SM)$ if $\sigma\in C^\infty(S\overline{M})$.

\subsection{Injectivity of the light ray transform}\label{sec:main theorem}
As a preparation for the proof of our main results in the next subsection, we establish the following injectivity of the light ray transform $L_W$, based on analyticity. In particular, it works for a general nonvanishing weight $W\in L^\infty(SM)$.

The approach has been used in earlier study of the light ray transform without weight \cite{FIKO2021, FIO2021}.
\begin{hypothesis}\label{hypo}
	Let $(M,g)$ be a compact connected Riemannian manifold with smooth boundary. Assume that the geodesic X-ray transform with a nonvanishing weight $W\in L^\infty(S M)$ is injective. Namely, for any $S\in L^\infty (M)$, if 
	$$
	I_WS(x,v):=\int_0^{\tau(x,v)} S(\gamma_{x,v}(s)) W(\gamma_{x,v}(s),\dot{\gamma}_{x,v}(s))\,ds=0 
	$$ 
	for a.e. $(x,v)\in \p_-SM$, then $S=0$.
\end{hypothesis}

On simple manifolds, injectivity of $I_W$ has been proven for generic weights \cite{FSU}, including the real-analytic ones. On the other hand, injectivity results of $I_W$ are known on manifolds admitting strictly convex functions \cite{PSUZ2019}. See \cite{ZhouSurvey2024} for additional references.

\begin{theorem}\label{injectivity of weighted light ray}
Suppose that Hypothesis \ref{hypo} holds. Assume $S\in L^\infty(\mathbb R\times M)$ is supported in $M_T$. If $L_W S(t,x,v)=0$ for almost every $(t,x,v)\in \R\times \p_-SM$ with a nonvanishing weight function $W\in L^\infty(SM)$, 
then $S=0$.
\end{theorem}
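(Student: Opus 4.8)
The plan is to exploit the temporal translation structure of $L_W$ and reduce to the stationary weighted X-ray transform $I_W$, whose injectivity is exactly Hypothesis~\ref{hypo}. Since $S$ is supported in $M_T=(0,T)\times M$, for each fixed $x$ the function $t\mapsto S(t,x)$ is compactly supported, so its Fourier transform $\widehat S(\xi,x)=\int_{\mathbb R} e^{-it\xi}S(t,x)\,dt$ makes sense and, crucially, extends to an entire function of $\xi\in\mathbb C$ (Paley--Wiener). First I would take the Fourier transform of the identity $L_WS(t,x,v)=0$ in the $t$ variable. Writing out
\begin{align*}
\widehat{L_WS}(\xi,x,v)
&=\int_{\mathbb R}e^{-it\xi}\int_0^{\tau(x,v)}S(t+s,\gamma_{x,v}(s))\,W(\rho_{x,v}(s))\,ds\,dt\\
&=\int_0^{\tau(x,v)}e^{is\xi}\,\widehat S(\xi,\gamma_{x,v}(s))\,W(\rho_{x,v}(s))\,ds,
\end{align*}
after the substitution $t\mapsto t-s$ and Fubini (justified since $S\in L^\infty$ is compactly supported in $t$ and $W\in L^\infty$, $\tau(x,v)<\infty$). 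Thus for each fixed real $\xi$ the function $S_\xi(x):=e^{i\xi\,\text{(appropriate parameter)}}\widehat S(\xi,x)$ — more precisely, observe that $e^{is\xi}\widehat S(\xi,\gamma_{x,v}(s))$ is exactly the integrand of a \emph{complex} weighted X-ray transform with weight $W(\rho_{x,v}(s))e^{is\xi}$; but the cleaner route is to absorb the factor differently.

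The key observation is that $e^{is\xi}$ is itself the solution of a transport equation along the geodesic: if we set $\Psi_\xi(x,v):=e^{-i\xi\,\tau_-(x,v)}$ in analogy with the GO construction (using $X\tau_-=1$), then $e^{is\xi}=\Psi_\xi(\gamma_{x,v}(s),\dot\gamma_{x,v}(s))^{-1}\Psi_\xi(x,v)^{-1}\cdot(\text{const along the flow})$; concretely $\tau_-(\gamma_{x,v}(s),\dot\gamma_{x,v}(s))=\tau_-(x,v)+s$ so $e^{is\xi}=e^{i\xi\tau_-(\rho_{x,v}(s))}e^{-i\xi\tau_-(x,v)}$. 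Plugging this in,
\begin{align*}
0=\widehat{L_WS}(\xi,x,v)=e^{-i\xi\tau_-(x,v)}\int_0^{\tau(x,v)}\bigl(e^{i\xi\tau_-}\widehat S(\xi,\cdot)\bigr)(\gamma_{x,v}(s))\,W(\rho_{x,v}(s))\,ds,
\end{align*}
so for every real $\xi$ the function $x\mapsto e^{i\xi\tau_-(x,v)}\widehat S(\xi,x)$ — wait, $\tau_-$ depends on $v$ too, so this is not quite a function on $M$. The correct reduction: the inner integral is $I_W\bigl(S^\xi\bigr)(x,v)$ where $S^\xi$ is \emph{not} a function on $M$ in general. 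Here I would instead argue more carefully: fix a real frequency $\xi$ and split $\widehat S(\xi,\cdot)=A+iB$ into real and imaginary parts (both real $L^\infty$ functions on $M$). Since $W$ is real-valued, taking real and imaginary parts of $\int_0^\tau e^{is\xi}\widehat S(\xi,\gamma_{x,v}(s))W\,ds=0$ gives two real equations; but the oscillatory factor $e^{is\xi}$ couples them and this does not immediately decouple into $I_W A=I_W B=0$.

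Therefore the honest route — and I expect this to be the main obstacle — is to use \emph{complex analyticity in $\xi$}. By Paley--Wiener, $\xi\mapsto\widehat S(\xi,x)$ is entire with exponential type controlled by $T$, and $\xi\mapsto\widehat{L_WS}(\xi,x,v)$ is likewise entire (dominated convergence / Morera, using that $W$ and the geodesic data are independent of $\xi$ and the $s$-integral is over a bounded interval). The hypothesis $L_WS\equiv0$ on $\mathbb R\times\p_-SM$ gives $\widehat{L_WS}(\xi,x,v)=0$ for all real $\xi$, hence by analytic continuation for \emph{all complex} $\xi$, in particular $\xi=0$: then $\widehat{L_WS}(0,x,v)=\int_0^{\tau(x,v)}\widehat S(0,\gamma_{x,v}(s))W(\rho_{x,v}(s))\,ds=I_W\bigl(\widehat S(0,\cdot)\bigr)(x,v)=0$, and Hypothesis~\ref{hypo} forces $\widehat S(0,\cdot)=0$, i.e. $\int_{\mathbb R}S(t,x)\,dt=0$. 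This alone is not enough; I must kill all frequencies. To do so I would differentiate the analytic identity $\widehat{L_WS}(\xi,x,v)=0$ repeatedly in $\xi$ at $\xi=0$: $\p_\xi^k\widehat{L_WS}(0,x,v)=\int_0^{\tau(x,v)}(is)^k\widehat S^{(k)}?\ldots$ — more directly, $\p_\xi^k$ hits both the $e^{is\xi}$ factor and $\widehat S(\xi,\cdot)$; evaluating a finite Leibniz sum at $\xi=0$ yields $\sum_{j=0}^k\binom kj I_{W_j}\bigl(\p_\xi^{k-j}\widehat S(0,\cdot)\bigr)=0$ with $W_j(\rho_{x,v}(s))=(is)^jW(\rho_{x,v}(s))$. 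These weights $W_j$ are \emph{not} of the form covered by Hypothesis~\ref{hypo} (they vanish at $s=0$), so one cannot directly invert. The clean fix: instead of expanding at a single point, use that $\widehat{L_WS}(\xi,x,v)=0$ for all $\xi\in\mathbb C$ \emph{and} note that moments $\p_\xi^k\widehat S(\xi,x)|_{\xi=0}=(-i)^k\int_{\mathbb R}t^kS(t,x)\,dt=:(-i)^km_k(x)$ determine $S(\cdot,x)$ (compactly supported $\Rightarrow$ determined by moments). Run an induction on $k$: the case $k=0$ gives $I_W m_0=0\Rightarrow m_0=0$ by Hypothesis~\ref{hypo}; assuming $m_0=\cdots=m_{k-1}=0$, the $k$-th $\xi$-derivative identity collapses (all lower-order terms vanish because $m_j=0$, and the surviving $j=0$ term is $I_W m_k=0$ since the extra $(is)^0$ weight is trivial) to $I_W m_k=0$, hence $m_k=0$. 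By induction all moments vanish, so $S(\cdot,x)\equiv0$ for a.e.\ $x$, i.e. $S=0$. The delicate points to verify are: the entirety of $\xi\mapsto\widehat{L_WS}(\xi,x,v)$ and the interchange of $\p_\xi^k$ with the $s$-integral (both routine given compact $s$-support and $L^\infty$ bounds), the measurability in $(x,v)$ so that "$=0$ a.e." propagates through the Fourier transform, and the moment-determinacy step (standard for compactly supported $L^\infty$ functions). The genuinely substantive input is Hypothesis~\ref{hypo} applied to each moment $m_k\in L^\infty(M)$; everything else is the Fourier-in-time reduction, which I expect to be the conceptual heart of the argument and the place where care is most needed.
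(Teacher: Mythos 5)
Your proposal is correct and, once the false starts are stripped away, it is essentially the paper's own argument: Fourier transform in $t$, apply Hypothesis~\ref{hypo} to $\hat S(0,\cdot)$, then induct on the order of the $\xi$-derivative at $\xi=0$ so that in the Leibniz expansion only the unweighted-by-$(is)^j$ term survives and $I_W$ can be inverted again, concluding via analyticity of $\hat S$ (equivalently, moment determinacy of the compactly supported $S$). The only cosmetic difference is that the paper phrases the final step as vanishing of all $\p_\eta^k\hat S(0,\cdot)$ plus analyticity rather than vanishing of all moments, which is the same thing.
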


\begin{proof} 
Consider 
$$
L_{W} S(t,x,v):=\int_0^{\tau(x,v)} S(t+s,\gamma_{x,v}(s))W(\gamma_{x,v}(s),\dot{\gamma}_{x,v}(s))\,ds,
\quad \hbox{for $a.e.$ } (t,x,v)\in \R\times \p_-SM.
$$

Taking the Fourier transform w.r.t. the time variable $t$, we get
\begin{align*}
	\widehat{L_{W}S} (\eta,x,v)
	&=\int_\R e^{-i\eta t}\int_0^{\tau(x,v)} S(t+s,\gamma_{x,v}(s))W(\gamma_{x,v}(s),\dot{\gamma}_{x,v}(s))\,dsdt.
\end{align*}
By the change of variable $t'=t+s$ and the Fubini's theorem, note that $S\in L^\infty(\mathbb R\times M)$, we derive
\begin{align*}
	\widehat{L_{W}S} (\eta,x,v)
	&= \int_0^{\tau(x,v)} e^{ i\eta s}\hat{S}(\eta,\gamma_{x,v}(s))W(\gamma_{x,v}(s),\dot{\gamma}_{x,v}(s))\,ds.
\end{align*}
 
When $\eta=0$,
$$
    \widehat{L_{W}S}(0,x,v) = \int_0^{\tau(x,v)}\hat{S}(0,\gamma_{x,v}(s))W(\gamma_{x,v}(s),\dot{\gamma}_{x,v}(s))\,ds =0\quad \hbox{for $a.e.$ } (x,v)\in \p_-SM
$$
for all geodesics $\gamma$ in $M$.  
Therefore, Hypothesis \ref{hypo} can be applied directly and it implies $\hat{S}(0,\cdot)=0$ in $M$.  Since the weight $W$ is independent of $\eta$, the derivatives of $\widehat{L_W S}$ at $\eta=0$ can be written as
\begin{align}\label{ID:higher deri}
   0= \p^k_\eta\widehat{L_{W}S}(\eta,x,v)|_{\eta=0} = \sum^k_{j=0}\left( \begin{array}{c}
   	k\\
   	j\\
   \end{array}\right)
\int_0^{\tau(x,v)}(is)^{k-j}\p^j_\eta\hat{S}(0,\gamma_{x,v}(s))W(\gamma_{x,v}(s),\dot{\gamma}_{x,v}(s))\,ds.
\end{align}
We then proceed by applying the induction argument. Suppose that $\p^j_\eta\hat{S}(0,\cdot)=0$ for $j<k$. From \eqref{ID:higher deri}, we obtain
$$
    0= \int_0^{\tau(x,v)} \p^k_\eta\hat{S}(0,\gamma_{x,v}(s))W(\gamma_{x,v}(s),\dot{\gamma}_{x,v}(s))\,ds.
$$
Applying the Hypothesis \ref{hypo} again, it yields $\p^k_\eta\hat{S}(0,\gamma_{x,v}(s))=0$.  
Therefore, $\p^k_\eta\hat{S}(0,\cdot)=0$ for all $k$. Since $S$ is compactly supported in $t$, its Fourier transform $\hat{S}(\eta,\cdot)$ is analytic with respect to $\eta$. Combining with $\p^k_\eta\hat{S}(0,\gamma_{x,v}(s))=0$ for all $k$, we conclude $\hat{S}(\eta,\cdot)=0$ for all $\eta\in\R$ and thus $S\equiv 0$.
\end{proof}

\begin{remark}
     The proof of Theorem \ref{injectivity of weighted light ray} relies essentially on the assumption that the unknown function $S$ has compact support, so the time Fourier transform of $S$ is analytic. Therefore it suffices to recover $\hat S$ and all its derivatives at single frequency $\eta=0$. In Appendix \ref{appendix 2}, we provide an alternative approach of showing the injectivity of light ray transforms of functions and tensor fields that are not necessarily compactly supported, by imposing hypothesises regarding the injectivity of certain weight geodesic X-ray transforms with respect to any frequencies $\eta\in\mathbb R$. 
\end{remark}

\subsection{Proof of main results} We are now ready to prove  Theorem~\ref{thm:main thm euclidean} and Theorem~\ref{thm:main thm geometric} as well as Corollary~\ref{cor:geometric condition}.
The two main theorems follow directly from the above facts. 
\begin{proof}[Proof of Theorem~\ref{thm:main thm euclidean} and Theorem~\ref{thm:main thm geometric}] 
    We apply Theorem \ref{injectivity of weighted light ray} to the light ray transform appearing in Proposition \ref{reduction to light ray} with $S=\widetilde{q}$ and $W(\gamma_{x,v}(s),\dot{\gamma}_{x,v}(s))= e^{-\int^s_0 (m-1)\sigma(\gamma_{x,v}(\ell),\dot{\gamma}_{x,v}(\ell))\,d\ell}$.
    Thus, the injectivity, $\widetilde{q}=q_1-q_2=0$, holds.
\end{proof}

For the geometric setting without scattering (namely, $\mu\equiv 0$ in \eqref{EQN: RTE equ}), the uniqueness result also holds in the manifold $(M,g)$ for any dimensions $n\geq 3$.
\begin{proof}[Proof of Theorem~\ref{thm:riemannian with scattering}]
    Without the presence of the scattering, that is, $\mu\equiv 0$, the remainder term $r_\lambda$ is trivial so that the GO solutions have relatively simple form, that is,
    $$
        u_\lambda(t,x,v)=\varphi_\lambda(t,x,v)\Theta_\sigma(x,v).
    $$
    As a result, by adjusting the arguments in previous subsections with $r_\lambda=0$, we will end up with the same weighted light ray transform in Proposition~\ref{reduction to light ray}. This enables us to use Theorem~\ref{injectivity of weighted light ray} again to derive $q_1=q_2$.
\end{proof}

Finally, we replace the injectivity assumption of the attenuated X-ray transform in Theorem \ref{thm:main thm euclidean} and \ref{thm:main thm geometric} by proper geometric conditions to establish Corollary \ref{cor:geometric condition}.

\begin{proof}[Proof of Corollary~\ref{cor:geometric condition}]
For the case (1), if $M$ is an open bounded and strictly convex domain in $\mathbb R^n$, then $\overline M$ is simple and admits a smooth strictly convex function (see e.g. \cite[Lemma 2.1]{PSUZ2019}). Since $\sigma\in C^\infty(S\overline M)$, if $I_{(m-1)\sigma}S=0$ for $S\in L^\infty(M)$, then $S\in C^\infty(\overline M)$, see e.g. \cite[Proposition 3]{FSU}. Now as $\overline M$ admits a smooth strictly convex function, it follows that the attenuated X-ray transform $I_{(m-1)\sigma}$ is injective on $L^\infty(M)$ by \cite{PSUZ2019}. Now we can apply Theorem \ref{thm:main thm euclidean} to conclude that $q_1=q_2$.

For the case (2), both assumptions (a) and (b) imply that $\overline M$ is a simple surface. Since $\sigma\in C^\infty(\overline M)$, similar to case (1), one can conclude that $\ker I_{(m-1)\sigma} \cap L^\infty(M)\subset C^\infty(\overline M)$. Therefore the attenuated X-ray transform $I_{(m-1)\sigma}$ is injective on $L^\infty(M)$ if $\sigma\in C^\infty(\overline M)$ by \cite{SU2011}. Now if $M$ is an open bounded and strictly convex domain in $\mathbb R^2$, we apply Theorem \ref{thm:main thm euclidean} to obtain that $q_1=q_2$. On the other hand, if $M$ is the interior of a simple surface and $\mu\in\mathcal M$, Theorem \ref{thm:main thm geometric} implies that $q_1=q_2$ as well.
\end{proof}

\subsection{Another uniqueness result based on monotonicity  
}
Before closing Section~\ref{sec:ICP}, we will study the problem under the monotonicity assumption $q_1\leq q_2$. With this new information, the uniqueness result can be proved directly from Lemma~\ref{lemma:GO id}.

\begin{theorem}[Monotonicity condition]  
Suppose that all conditions in Lemma~\ref{lemma:GO id} hold. Suppose that $q_1\leq q_2$.  
 If $\mathcal A_1(h_0,h_-)=\mathcal A_2(h_0,h_-)$ for all $(h_0,h_-)\in \mathcal{X}_\delta$, then 
$$
    q_1=q_2\quad \hbox{ in }(0,T)\times M.
$$
\end{theorem}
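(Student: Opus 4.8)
The plan is to invoke Lemma~\ref{lemma:GO id} along the \emph{coupled} path $\eta=m\lambda$. This particular choice makes the oscillatory factor in the integrand cancel identically, turning \eqref{ID: GO} into the vanishing of a \emph{sign-definite} integral, and then the hypothesis $\widetilde q:=q_1-q_2\le 0$ forces $\widetilde q\equiv 0$ without any light-ray-transform machinery.

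First I would fix an arbitrary \emph{non-negative} $\phi\in C^\infty_0(\R,C^\infty_0(\p_-SM))$ and build \emph{both} GO solutions out of this single $\phi$: take $u$ of the form \eqref{u_12} with amplitude $\varphi_\lambda=\varphi\,e^{i\lambda p}$ and $u_0$ of the form \eqref{u_-} with amplitude $\varphi_\eta=\varphi\,e^{i\eta p}$, where $\varphi(t,x,v)=\phi(t-\tau_-(x,v),\rho_{x,v}(-\tau_-(x,v)))$ and $p$ is the common phase ($p=t-x\cdot v$ in the Euclidean case, $p=t-\tau_-(x,v)$ in the Riemannian case). With $\eta=m\lambda$ one has $\varphi_\lambda^m\,\overline{\varphi}_\eta=\varphi^{m+1}\,e^{i(m\lambda-m\lambda)p}=\varphi^{m+1}$, which no longer depends on $\lambda$; since $\lambda\to\infty$ forces $\eta=m\lambda\to\infty$, Lemma~\ref{lemma:GO id} applies and yields
\begin{align*}
	\int_{SM_T}\widetilde q(t,x)\,\varphi^{m+1}(t,x,v)\,\Theta_\sigma^{m-1}(x,v)\,dtd\Sigma=0 .
\end{align*}
Because $\Theta_\sigma=e^{-\int\sigma}>0$ and $\phi\ge 0$ gives $\varphi\ge 0$, hence $\varphi^{m+1}\ge 0$ regardless of the parity of $m$, the integrand above is $\le 0$ a.e.\ on $SM_T$; an integrable function that is $\le 0$ and has zero integral vanishes a.e., so
\begin{align*}
	\widetilde q(t,x)\,\varphi^{m+1}(t,x,v)=0\qquad\text{for a.e. }(t,x,v)\in SM_T,
\end{align*}
and this holds for \emph{every} non-negative $\phi$.

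It then remains to upgrade this to $\widetilde q=0$ a.e.\ in $M_T$. Writing $\Psi(t,x,v):=(t-\tau_-(x,v),\rho_{x,v}(-\tau_-(x,v)))$, a smooth map from $SM_T$ into $\R\times\p_-SM$ (by the non-trapping and strict convexity assumptions), we have $\varphi=\phi\circ\Psi$. I would pick a countable family of non-negative bumps $\{\phi_k\}\subset C^\infty_0(\R,C^\infty_0(\p_-SM))$ whose positivity sets $\{\phi_k>0\}$ cover $\R\times\p_-SM$; then the sets $\{\varphi_k>0\}=\Psi^{-1}(\{\phi_k>0\})$ cover $SM_T$, and on each of them $\varphi_k^{m+1}>0$, which forces $\widetilde q(t,x)=0$ a.e.\ there, so $\widetilde q(t,x)=0$ for a.e.\ $(t,x,v)\in SM_T$. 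Fubini then gives $\widetilde q=0$ a.e.\ in $M_T$, i.e.\ $q_1=q_2$ in $(0,T)\times M$. I expect the only points requiring some care to be this last measure-theoretic passage from ``vanishing against all admissible amplitudes'' to ``vanishing a.e.'', together with the harmless rescaling $\phi\mapsto c\phi$ with $c$ small (using that \eqref{Integral identity} is homogeneous in the linear solution $u$) needed so that the GO data $(u|_{t=0},u|_{\p_-SM_T})$ lies in $\mathcal{X}_\delta$; everything else is immediate from Lemma~\ref{lemma:GO id} and the positivity of $\varphi^{m+1}\Theta_\sigma^{m-1}$.
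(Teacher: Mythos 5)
Your proposal is correct and follows essentially the same route as the paper: choose $\eta=m\lambda$ with both GO amplitudes built from a single non-negative $\phi$, so that the phases cancel and \eqref{ID: GO} becomes the vanishing of a sign-definite integral, which together with $q_1\le q_2$ forces $q_1=q_2$. The only cosmetic difference is the last step, where the paper uses one $\phi$ with $\int_{S_xM}\varphi^{m+1}\,dv>0$ for all $(t,x)\in M_T$ (available by strict convexity) instead of your countable covering family; both are fine.
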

\begin{proof}
We take $\eta=m\lambda$ and  
\begin{align*}
    \varphi_\lambda (t,x,v) & =\phi(t-\tau_-(x,v),\rho_{x,v}(-\tau_-(x,v)))e^{i\lambda (t-\tau_-(x,v))},\\
    \varphi_\eta (t,x,v) & =\varphi_{m\lambda}(t,x,v)=\phi(t-\tau_-(x,v),\rho_{x,v}(-\tau_-(x,v)))e^{i m\lambda (t-\tau_-(x,v))}
\end{align*}
in \eqref{u_12}-\eqref{u_-}, where $\phi\in C^\infty_0(\R, C^\infty_0(\p_- SM))$ satisfying $\phi\geq 0$.

Since $q_2\geq q_1$, from \eqref{ID: GO}, we have
    \begin{align*}
        0&=\int_{SM_T}(q_2-q_1)(t,x) \bigg(\phi(t-\tau_-(x,v),\rho_{x,v}(-\tau_-(x,v)))\bigg)^{m+1} \Theta_\sigma^{m-1}   \,dtd\Sigma\\
        &\geq  e^{-(m-1)\sigma_0 diam(M)}\int_{SM_T}(q_2-q_1)(t,x) \bigg(\phi(t-\tau_-(x,v),\rho_{x,v}(-\tau_-(x,v)))\bigg)^{m+1} \,dtd\Sigma \geq 0.
    \end{align*}
Since $\p M$ is strictly convex, there exists $\phi\in C^\infty_0(\R, C^\infty_0(\p_- SM))$ such that 
$$
    \int_{S_xM}\bigg(\phi(t-\tau_-(x,v),\rho_{x,v}(-\tau_-(x,v)))\bigg)^{m+1}\,dv>0
$$
for all $(t,x)\in M_T$. This implies that for a.e. $(t,x)\in M_T$, 
$$
    (q_2-q_1)(t,x)=0.
$$
\end{proof}

\section{Inverse source problem}\label{sec:inverse source problem}
In this section, we consider the linear transport equation in the non-scattering medium. The objective is to reconstruct the source term in the equation from the measurement. Accordingly, we will first link the expression of the transport solution in \cite{LaiUhlmannZhou22} and the weighted light ray transform under suitable assumptions.

\subsection{A connection between different expressions of transport solutions}
Assume that $S\in L^\infty(\mathbb R\times M)$ is supported in $M_T$.
From Proposition~2.2 in \cite{LaiUhlmannZhou22}, the solution $f$ to the following equation
\begin{align}\label{EQN: f nonscattering}
    \p_t f + X f  + \sigma f = S(t,x) 
\end{align}
with $f|_{t=0}=f|_{\p_-SM_T}=0$ can be written as 
\begin{align}\label{ID: f integral formulation}
    f(t,x,v)  
	 = \int^t_0  e^{-\int^s_0 \sigma(\rho_{x,v}(-r))dr} S(t-s,\gamma_{x,v}(-s))H(\tau_-(x,v)-s)\,ds,\quad (t,x,v)\in SM_T,
\end{align}
where $H$ is the Heaviside function, that is, $H$ satisfies $H(s)=0$ if $s<0$ and $H(s)=1$ if $s>0$.

For $(x,v)\in SM$, when $t\geq \tau_-(x,v)$, it is clear that \eqref{ID: f integral formulation} can be expressed as
\begin{align}\label{ID: f integral formulation 1}
    f(t,x,v)  
	&= \int^{\tau_-(x,v)}_0  e^{-\int^s_0 \sigma(\rho_{x,v}(-r))dr} S(t-s,\gamma_{x,v}(-s)) \,ds.
\end{align}
On the other hand, when $t<\tau_-(x,v)$, since $S$ is supported in $M_T$ which implies $S(t-s,\gamma_{x,v}(-s))=0$ for $s\in [t,\tau_-(x,v))$, we can extend its integral region from $(0,t)$ to $(0,\tau_-(x,v))$ so that \eqref{ID: f integral formulation} is also of the form \eqref{ID: f integral formulation 1} for $t<\tau_-(x,v)$. 

Replacing $v$ by $-v$, we get from \eqref{ID: f integral formulation 1} that
\begin{align}\label{ID: f integral formulation 2}
    \tilde{f}(t,x,v) := f(t,x,-v)  
	&= \int^{\tau(x,v)}_0  e^{-\int^s_0 \sigma(\rho_{x,v}(r))dr} S(t-s,\gamma_{x,v}(s)) \,ds,
\end{align}
which is the weighted light ray transform satisfying
$$
    \p_t \tilde{f} - X \tilde{f} + \sigma \tilde{f} = S(t,x) \quad\hbox{ with } \tilde{f}|_{t=0}=\tilde{f}|_{\p_+SM}=0.
$$
  
\subsection{Inverse source problem in the absence of the scattering}
Recall that in both the well-posedness and linearization, we view the nonlinear coefficient $q$ as part of a source term for linear transport equations. By slightly modifying the arguments in Section~\ref{sec:linearization}-\ref{sec:main theorem}, we can prove the following uniqueness result for the inverse source problem for the time-dependent linear transport equation:
\begin{align}\label{EQN: RTE equ without scattering}
	\left\{\begin{array}{rcll}
		\p_tf + X f  + \sigma f &=& S(t,x) & \hbox{in } SM_T,  \\
		f &=&0 & \hbox{on } \{0\}\times SM,\\
		f &=&0& \hbox{on }\p_-SM_T.
	\end{array}\right.
\end{align}

We define the measurement operator by 
\begin{align*}
\mathcal{A}: S\in L^\infty(M_T)\rightarrow (f|_{t=T},f|_{\p_+SM_T})\in L^\infty(SM)\times L^\infty(\p_+SM_T),
\end{align*}
which is well-defined. The aim is to determine the source $S$ from measuring $\mathcal{A}(S)$. 

\begin{proposition}
Let $f_j$ be the solution to \eqref{EQN: RTE equ without scattering} with $S=S_j\in L^\infty(M_T)$ for $j=1,\,2$. Suppose that $\widetilde{S}:=S_1-S_2$. 
Suppose that Hypothesis~\ref{hypo} holds with $S=\widetilde{S}$ and $W=e^{-\int^s_0 \sigma(\gamma_{x,v}(\ell),\dot{\gamma}_{x,v}(\ell))\,d\ell}$.
If $\mathcal A(S_1) =\mathcal A(S_2)$, 
then 
$$
    S_1=S_2\quad\hbox{ in }\quad (0,T)\times M.
$$
\end{proposition}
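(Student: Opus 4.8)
The plan is to mirror the structure already developed for the inverse coefficient problem, but with a substantially simpler linearization step since the equation \eqref{EQN: RTE equ without scattering} is already linear in the unknown source. First I would observe that, because the equation is linear, the difference $f:=f_1-f_2$ solves \eqref{EQN: RTE equ without scattering} with source $\widetilde S:=S_1-S_2$, trivial initial data, and trivial incoming data. The hypothesis $\mathcal A(S_1)=\mathcal A(S_2)$ then gives $f|_{t=T}=0$ and $f|_{\p_+SM_T}=0$. So the entire reduction to an integral identity collapses: we do not need GO solutions or the finite-difference machinery at all in this non-scattering case.

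Next I would use the explicit representation formula from \cite{LaiUhlmannZhou22} recalled in the previous subsection. Since $\mu=0$ and the source is $\widetilde S$ supported in $M_T$, the solution $f$ is given by \eqref{ID: f integral formulation 1}, and after replacing $v$ by $-v$ we obtain, as in \eqref{ID: f integral formulation 2},
\begin{align*}
    f(t,x,-v) = \int_0^{\tau(x,v)} e^{-\int_0^s \sigma(\rho_{x,v}(r))\,dr}\,\widetilde S(t-s,\gamma_{x,v}(s))\,ds, \quad (t,x,v)\in SM_T.
\end{align*}
For $(x,v)\in\p_+SM$ (so that $(x,-v)\in\p_-SM$), the left side vanishes by the boundary condition $f|_{\p_+SM_T}=0$ extended to all $t$ since $\widetilde S$ is supported in $M_T$. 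Thus, after a change of variable $t\mapsto t+\tau(x,v)$ (or simply reading off the transform at the exit point and relabeling the time variable as in Proposition~\ref{reduction to light ray}), we get that the weighted light ray transform $L_W\widetilde S(t,x,v)=0$ for almost every $(t,x,v)\in\R\times\p_-SM$, with weight $W(\gamma_{x,v}(s),\dot\gamma_{x,v}(s))=e^{-\int_0^s\sigma(\rho_{x,v}(r))\,dr}$. This weight is nonvanishing and bounded, being an exponential of a bounded function.

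Finally I would invoke Theorem~\ref{injectivity of weighted light ray}: under Hypothesis~\ref{hypo} (assumed in the statement with exactly this $S$ and this $W$), the vanishing of $L_W\widetilde S$ on $\R\times\p_-SM$ together with $\widetilde S$ being supported in $M_T$ forces $\widetilde S\equiv 0$, i.e. $S_1=S_2$ in $(0,T)\times M$. I do not anticipate a serious obstacle here: the only point requiring care is the bookkeeping that translates the boundary observation $f|_{\p_+SM_T}=0$ together with the representation formula into the statement $L_W\widetilde S=0$ for all time shifts $t\in\R$, and this is already carried out in the subsection "A connection between different expressions of transport solutions" and in Proposition~\ref{reduction to light ray}. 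One should just double-check that the support assumption on $\widetilde S$ legitimately allows extending the time variable from $(0,T)$ to all of $\R$, which is exactly the remark made after \eqref{ID: f integral formulation 1}.
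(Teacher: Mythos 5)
Your overall route coincides with the paper's: identify $f_1-f_2$ (after $v\mapsto -v$) with a weighted light ray transform of $\widetilde S$ via the representation formula \eqref{ID: f integral formulation 1}--\eqref{ID: f integral formulation 2}, and then invoke Theorem~\ref{injectivity of weighted light ray}. However, there is a genuine gap at the step where you assert that $L_W\widetilde S(t,x,v)=0$ for almost every $(t,x,v)\in\R\times\p_-SM$. The outgoing measurement only gives the vanishing for $t\in(0,T)$, and your justification for extending it to all $t$ --- ``the boundary condition $f|_{\p_+SM_T}=0$ extended to all $t$ since $\widetilde S$ is supported in $M_T$'' --- fails for $t\in(T,\,T+\tau(x,v))$: for such $t$ the integrand $\widetilde S(t-s,\gamma_{x,v}(s))$ is evaluated at times $t-s\in(0,T)$ for $s$ in a nonempty subinterval of $(0,\tau(x,v))$, so the support condition alone does not force the integral to vanish. (For $t<0$ the support argument does work.) The remark after \eqref{ID: f integral formulation 1} that you cite concerns extending the $s$-integration range from $(0,t)$ to $(0,\tau_-(x,v))$, not extending the $t$-range on which the transform is known to vanish.

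The paper closes exactly this gap using the other half of the measurement, namely the final data $f|_{t=T}=0$ on all of $SM$ (in the interior, not just at the boundary): writing $t=T+t'$ with $t'>0$ and changing variables $s\mapsto s'=s-t'$, one obtains
$$
\widetilde F(T+t',x,v)=e^{-\int_{-t'}^{0}\sigma(\rho_{\rho_{x,v}(t')}(r'))\,dr'}\,\widetilde F\bigl(T,\rho_{x,v}(t')\bigr),
$$
and the right-hand side vanishes either because $0<t'<\tau(x,v)$ so that $\rho_{x,v}(t')\in SM$ and $\widetilde F(T,\cdot)=0$ there, or because $t'>\tau(x,v)$ and the support of $\widetilde S$ forces $\widetilde F(T,\rho_{x,v}(t'))=0$. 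You record the final-time condition $f|_{t=T}=0$ at the outset but never use it; without this flow computation the hypothesis of Theorem~\ref{injectivity of weighted light ray} (vanishing of the transform on all of $\R\times\p_-SM$) is not verified, and the Fourier-in-time argument cannot be launched. The rest of your argument (the representation formula, the nonvanishing bounded weight, and the final appeal to the injectivity theorem) matches the paper.
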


\begin{proof}
We extend $\widetilde{S}$ by zero to $\R$ in the $t$ variable so that $\widetilde{S}\in L^\infty(\R\times M)$.
Due to the well-posedness, there exists a unique solution $f_j\in L^\infty(SM_T)$ to \eqref{EQN: RTE equ without scattering} with $S=S_j$ for $j=1,2$, respectively. Denote $F:=f_1-f_2$. Then $F$ is a solution to
\begin{align}\label{EQN:F}
    \p_t F + X F  + \sigma F=\widetilde{S}(t,x)
\end{align}
and satisfies $F|_{t=0}=F|_{\p_-SM_T}=0$.  
Without the scattering term, as mentioned above, the solution $F$ can be expressed as
\begin{align*}
    F(t,x,v)  
	&= \int^{\tau_-(x,v)}_0  e^{-\int^s_0 \sigma(\rho_{x,v}(-r))dr} \widetilde{S}(t-s,\gamma_{x,v}(-s)) \,ds
\end{align*}
and $F(t,x,-v)$ becomes a weighted light ray transform \eqref{ID: f integral formulation 2}, denoted by
$$
    \widetilde{F}(t,x,v):=F(t,x,-v) = \int^{\tau(x,v)}_0  e^{-\int^s_0 \sigma(\rho_{x,v}(r))dr} \widetilde{S}(t-s,\gamma_{x,v}(s)) \,ds.
$$
Moreover, from $\mathcal A(S_1)=\mathcal A(S_2)$, we also have $F|_{\p_+SM_T}=F|_{t=T}=0$, which leads to $\widetilde{F}|_{t=T}=0$ and $\widetilde{F}|_{\p_-SM_T}=F|_{\p_+SM_T}=0$. Thus
$$
    \widetilde{F}(t,x,v)=0\quad (t,x,v)\in (0,T)\times \p_-SM. 
$$
Indeed, $\widetilde{F}=0$ in $\R\times \p_-SM$. To see this, we observe that $\widetilde{F}(t,\cdot,\cdot)=0$ in $\p_-SM$ for $t <0$ since $\widetilde{S}$ is supported in $SM_T$. On the other hand, for case $t>T$, we let $t=T+t'$ with $t'>0$. Through the change of variable $s\mapsto s'=s-t'$, we have
\begin{align*}
    \widetilde{F} (T+t',x,v) 
    &= \int^{\tau(x,v)}_0  e^{-\int^s_0 \sigma(\rho_{x,v}(r))dr} \widetilde{S}(T+t'-s,\gamma_{x,v}(s)) \,ds \\
    &= \int^{\tau(\rho_{x,v}(t'))}_{-t'}  e^{-\int^{s'+t'}_0 \sigma(\rho_{x,v}(r))dr} \widetilde{S}(T-s',\gamma_{\rho_{x,v}(t')}(s')) \,ds'\\
    &= \int^{\tau(\rho_{x,v}(t'))}_{-t'}  e^{-\int^{s'}_{-t'} \sigma(\rho_{\rho_{x,v}(t')}(r'))dr'} \widetilde{S}(T-s',\gamma_{\rho_{x,v}(t')}(s')) \,ds'\\
    &= e^{-\int^{0}_{-t'} \sigma(\rho_{\rho_{x,v}(t')}(r'))dr'} \LC\int^{\tau(\rho_{x,v}(t'))}_{0}  e^{-\int^{s'}_{0} \sigma(\rho_{\rho_{x,v}(t')}(r'))dr'} \widetilde{S}(T-s',\gamma_{\rho_{x,v}(t')}(s')) \,ds'\RC\\
    &= e^{-\int^{0}_{-t'} \sigma(\rho_{\rho_{x,v}(t')}(r'))dr'} \widetilde{F}(T, \rho_{x,v}(t')),
\end{align*}
where we applied the change of variable $r\mapsto r'=r-t'$ and also used the fact that $\widetilde{S}$ is supported in $M_T$ in the third and fourth identities, respectively. When $t'>\tau(x,v)$, due to the support of $\widetilde{S}$, $\widetilde{F}(T, \rho_{x,v}(t'))=0$, which yields that $\widetilde{F} (T+t',x,v) =0$. When $0<t'< \tau(x,v)$, the flow $\rho_{x,v}(t')\in SM$. Since the final data $\widetilde{F}|_{t=T}=0$ in $SM$, we also get $\widetilde{F}(T, \rho_{x,v}(t'))=0$ and thus $\widetilde{F} (T+t',x,v) =0$. Now we have showed that $\widetilde{F}=0$ in $\R\times \p_-SM$ a.e.. 

Finally, under the hypothesis \ref{hypo} that the attenuated ray transform is injective, performing the same argument as in the proof of Theorem~\ref{injectivity of weighted light ray} yields $\widetilde{S}=0$.
\end{proof}

\appendix

\section{Light ray transform on infinite cylinder}\label{appendix 2}

In this section, we consider the light ray transform of functions or tensor fields that are not necessarily supported in $M_T$.  

We give an alternative approach of showing the injectivity, up to natural gauge, of the light ray transform of symmetric tensor fields. It has been studied in \cite{FIKO2021, FIO2021} for static Lorentzian manifolds using analyticity of the time Fourier transform of tensors compactly supported in time. Earlier results in Minkowski spacetime can be found in e.g. \cite{Ra2018, KSV2020}.

Recall that $M$ is the interior of a compact non-trapping Riemannian manifold $(\overline M, g)$, of dimension $n\geq 2$, with smooth strictly convex boundary $\p M$. Let $f\in L^\infty(N;\mathcal S^m)\cap L^1(\mathbb R; C(\overline M);\mathcal S^m)$
be a symmetric tensor field of rank $m$ on $N=\mathbb R\times M$, where $\mathcal S^m$ denotes the space of symmetric tensor fields of rank $m$ on $N$ for each $m=0,1,\cdots$. In local coordinates $z=(z^0,z^1,\cdots, z^n)=(t,x^1,\cdots,x^n)$, 
$$f(z)=f_{i_1\cdots i_m}(z) dz^{i_1}\cdots dz^{i_m},$$
where $dz^0=dt$ and $dz^j=dx^j$ for $j=1,\cdots, n$. Equivalently, we can write $f$ as
$$f=f_m+f_{m-1}dt+\cdots+f_1 (dt)^{m-1}+ f_0 (dt)^m,$$
where $f_k\in L^\infty(\mathbb R;L^\infty(M;S^k))\cap L^1(\mathbb R;C(\overline M;S^k))$ for $k=0,1,\cdots,m$. Here $S^k$ denotes the space of symmetric tensor fields of rank $k$ on $M$, and $f_k (dt)^{m-k}$ denotes the symmetrized tensor product of $f_k$ with $(dt)^{m-k}$. We denote the (static) Lorentzian metric on $N$ by $\bar g=-(dt)^2+g$. 

For simplicity, we denote the space $L^\infty(N;\mathcal S^m)\cap L^1(\mathbb R; C(\overline M);\mathcal S^m)$ by $\mathcal L(N;\mathcal S^m)$, and the space $L^\infty(\mathbb R;L^\infty(M;S^k))\cap L^1(\mathbb R;C(\overline M;S^k))$ by $\mathcal D(M;S^k)$. Note that $\mathcal L(N,\mathcal S^m)\subset L^2(N;\mathcal S^m)$.

\begin{lemma}\label{tensor decomposition}
    Given any $f\in \mathcal L(N; \mathcal S^m)$, there exist $p\in \mathcal D(M;S^{m})$, $q\in \mathcal D(M;S^{m-1})$, $r\in \mathcal D(M;S^{m-2})$ and $\lambda\in \mathcal L(N;\mathcal S^{m-2})$ such that
$$f=(p+r\,g)+q\, dt+\lambda \,\bar g.$$
\end{lemma}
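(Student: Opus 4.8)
The plan is to grade all tensors by the number of factors of $dt$ they carry and to turn the asserted identity into a finite \emph{triangular} linear system for the ``spatial'' components, which one then solves by an explicit recursion. Starting from the given decomposition $f=f_m+f_{m-1}\,dt+\cdots+f_1\,(dt)^{m-1}+f_0\,(dt)^m$ with $f_k\in\mathcal D(M;S^k)$, I would look for $\lambda$ in the analogous graded form $\lambda=\sum_{i=0}^{m-2}\lambda^{(i)}(dt)^i$ with $\lambda^{(i)}\in\mathcal D(M;S^{m-2-i})$, together with $p\in\mathcal D(M;S^m)$, $q\in\mathcal D(M;S^{m-1})$ and $r\in\mathcal D(M;S^{m-2})$ to be determined. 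Writing $\bar g=g-(dt)^2$ and using bilinearity and commutativity of the symmetrized tensor product,
$$\lambda\,\bar g=\sum_{i=0}^{m-2}\bigl(\lambda^{(i)}g\bigr)(dt)^i-\sum_{i=0}^{m-2}\lambda^{(i)}(dt)^{i+2},$$
so that, with the convention $\lambda^{(i)}:=0$ for $i\notin\{0,\dots,m-2\}$, comparing the coefficients of $(dt)^{m-k}$ on the two sides of $f=(p+rg)+q\,dt+\lambda\,\bar g$ yields
$$f_k=\delta_{k,m}\,(p+rg)+\delta_{k,m-1}\,q+\lambda^{(m-k)}g-\lambda^{(m-k-2)},\qquad k=0,1,\dots,m.$$

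I would then solve this system by running $k$ upward. For $k=0$, and for $k=1$ when $m\ge3$, the identity reads $f_k=-\lambda^{(m-k-2)}$, fixing $\lambda^{(m-2)}=-f_0$ and $\lambda^{(m-3)}=-f_1$. For $2\le k\le m-2$ it reads $f_k=\lambda^{(m-k)}g-\lambda^{(m-k-2)}$, which expresses $\lambda^{(m-k-2)}$ through the already computed $\lambda^{(m-k)}$ (the latter has strictly larger superscript), so letting $k$ increase from $2$ to $m-2$ determines all remaining $\lambda^{(j)}$. The equation at $k=m-1$ then gives $q=f_{m-1}-\lambda^{(1)}g$, and the equation at $k=m$ gives $p+rg=f_m-\lambda^{(0)}g$, for which I simply take $r=0$ and $p=f_m-\lambda^{(0)}g$; the term $rg$ in the statement merely records the harmless freedom to move a multiple of $g$ between $p$ and $rg$, which can be used for a later normalization of $p$. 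The small cases $m=2$ and $m=3$ reduce to a line or two and are checked directly.

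It then remains to verify the regularity claims. Since $g\in C^\infty(\overline M;S^2)$ is smooth and independent of $t$, symmetrized multiplication by $g$ maps $\mathcal D(M;S^j)$ into $\mathcal D(M;S^{j+2})$ — it preserves both $L^\infty(N)$ and $L^1(\mathbb R;C(\overline M))$ with constants depending only on $g$ — and finite sums of such tensors stay in the same class; hence, inductively, every $\lambda^{(i)}$ and then $q$ and $p$ lie in the asserted $\mathcal D$-spaces, while $\lambda=\sum_i\lambda^{(i)}(dt)^i\in\mathcal L(N;\mathcal S^{m-2})$ because each summand is. I do not expect a genuine obstacle here: the lemma is purely algebraic once the $dt$-grading is fixed, and the only points requiring care are keeping the index ranges and the two boundary equations at $k=m-1,m$ straight, and noting that the recursion for the $\lambda^{(i)}$ is well-founded since each $\lambda^{(j)}$ is expressed solely in terms of $\lambda^{(j')}$ with $j'>j$.
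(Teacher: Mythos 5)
Your argument is correct; it reaches the same decomposition as the paper but by a different organization of the algebra. The paper proves the lemma by induction on the rank $m$: after checking $m=0,1,2$ explicitly (with $f=(f_2+f_0\,g)+f_1\,dt+(-f_0)\,\bar g$ for $m=2$), it writes a rank-$(m+1)$ tensor as $f_{m+1}+(f_m+\cdots+f_0(dt)^m)\,dt$, applies the inductive hypothesis to the rank-$m$ factor, and redistributes the terms $p\,dt$, $r\,g\,dt$, $q\,(dt)^2$, $\lambda\,dt\,\bar g$ back into the three slots. You instead fix $m$, expand $\lambda=\sum_{i=0}^{m-2}\lambda^{(i)}(dt)^i$, and compare coefficients of $(dt)^{m-k}$ to obtain a triangular system solved by the explicit two-step recursion $\lambda^{(m-2)}=-f_0$, $\lambda^{(m-3)}=-f_1$, $\lambda^{(m-k-2)}=\lambda^{(m-k)}g-f_k$, with $q$ and $p+r\,g$ then read off from the equations at $k=m-1,m$; I checked that this reproduces the paper's $m=2,3$ formulas and that the index bookkeeping (including the conventions $\lambda^{(i)}=0$ for $i\notin\{0,\dots,m-2\}$ and the restriction of the middle recursion to $2\le k\le m-2$, $m\ge 4$) is consistent. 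What your route buys is a closed-form expression for every component of the decomposition, at the cost of slightly heavier notation; the paper's induction is shorter to write but leaves the components implicit. Your regularity discussion is also adequate: symmetrized multiplication by the smooth, $t$-independent tensor $g$ preserves both $L^\infty(N)$ and $L^1(\mathbb R;C(\overline M))$, so all $\lambda^{(i)}$, $q$, $p$ land in the asserted $\mathcal D$-spaces and $\lambda\in\mathcal L(N;\mathcal S^{m-2})$. The choice $r=0$ is legitimate, since $r$ only records the gauge freedom of moving a multiple of $g$ between $p$ and $r\,g$ (the paper's inductive step happens to produce a nonzero $r$, but the statement does not require one).
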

\begin{proof}
We prove the above decomposition by induction. The cases $m=0, 1$ are obvious. When $m=2$, notice that
$$f=f_2+f_1 dt+f_0 (dt)^2=(f_2+f_0\,g)+f_1 dt+(-f_0)(-(dt)^2+g).$$
 Now assume that the decomposition holds for some $m\geq 2$, let $f$ be a tensor of rank $m+1$, then
 \begin{align*}
    f& =f_{m+1}+f_{m}dt+\cdots+f_0 (dt)^{m+1}\\
    &= f_{m+1}+(f_m+f_{m-1}dt+\cdots+f_0 (dt)^m) \, dt\\
    & =f_{m+1}+\big(p+r\,g+q\, dt+\lambda \,\bar g \big)\, dt\\
    &=f_{m+1}+(p+r\,g) dt+q (dt)^2+\lambda (dt) \bar g\\
    & =(f_{m+1}+q\,g)+(p+r\,g) dt+ (\lambda \,dt-q)\bar g.
\end{align*}  
\end{proof}

Consider the light ray transform of a symmetric tensor field $f$ of rank $m$ on $N=\mathbb R\times M$ defined by
$$L_m f(t,x,v)=\int_0^{\tau(x,v)} f_{i_1\cdots i_m}(\tilde\gamma_{t,x,v}(s))\dot{\tilde\gamma}_{t,x,v}^{i_1}(s)\cdots \dot{\tilde\gamma}_{t,x,v}^{i_m}(s)\, ds, \quad (t,x,v)\in \mathbb R\times S\overline{M},$$
where the light ray/null geodesic $\tilde\gamma_{t,x,v}(s)=(t+s,\gamma_{x,v}(s))$ with $\gamma_{x,v}$ the geodesic on $M$. Note that $\dot{\tilde\gamma}^0_{t,x,v}(s)\equiv 1$ and $\dot{\tilde\gamma}^j_{t,x,v}(s)=\dot\gamma^j_{x,v}(s)$ are the $j^{th}$-component of $\dot{\gamma}_{x,v}(s)$ for $j=1,\cdots,n$. It's easy to see that $L_m (\lambda\,\bar g)\equiv 0$ for any $\lambda\in \mathcal L(N;\mathcal S^{m-2})$.
We may simply denote the integrand of $L_mf$ by 
$$f(t+s,\gamma_{x,v}(s),\dot\gamma_{x,v}(s)).$$

Let $\eta\in \mathbb R$, we define the following attenuated geodesic X-ray transform on $M$ of $f\in L^\infty(M;S^k)$, $k\geq 0$, as
$$I_\eta f(x,v)=\int_0^{\tau(x,v)} e^{i\eta s} \, f_{i_1\cdots i_k}(\gamma_{x,v}(s))\dot\gamma_{x,v}^{i_1}(s)\cdots \dot\gamma_{x,v}^{i_k}(s) \,ds.$$
We denote $d^s$ the symmetric differentiation w.r.t. the Riemannian metric $g$.

\begin{hypothesis}\label{hypo 1}
    For each $m=0,1,\ldots$, we say that $I_\eta$ is s-injective for degree $m$ if $I_\eta f|_{\p_-SM}\equiv 0$, $f\in \bigoplus_{k=0}^m C(\overline M;S^k)$, implies that $f=d^s p+i\eta p$ for some $p\in \bigoplus_{k=0}^{m-1} C^1(\overline M;S^k)$ with $p|_{\p M}=0$. When $m=0$, this just means that $f=0$, i.e. $I_\eta$ is injective.
\end{hypothesis}

Notice that the attenuation $i\eta$ is complex. When $m=0$ and $1$, injectivity results for $I_\eta$ were proved on simple manifolds \cite{SU2011, GPSU2016, Zhou2017} and manifolds admitting strictly convex functions \cite{PSUZ2019}, see also recent survey \cite{ZhouSurvey2024} and the references therein. When $m\geq 2$, injectivity results are known on simple surfaces \cite{KMM2019} and negatively curved manifolds \cite{PS2023}.

\begin{theorem}\label{infinite tensor light ray}
    Assume that the attenuated X-ray transform $I_\eta$ is s-injective for degree $m$ for all $\eta\in \mathbb R$. Let $f\in L^\infty(N;\mathcal S^m)\cap L^1(\mathbb R; C(\overline M);\mathcal S^m)$. 
    If $L_m f=0$ for all $(t,x,v)\in \mathbb R\times \p_-SM$, then 
    \begin{itemize}
        \item $f=0$, if $m=0$;
        \item $f=\bar d^s p$ for some $p\in L^\infty(N)\cap H^1(\mathbb R;C^1(\overline M))$, $p|_{\p N}=0$, if $m=1$;
        \item $f=\bar d^s p+\lambda \bar g$ for some $p\in L^\infty(N;\mathcal S^{m-1})\cap H^1(\mathbb R;C^1(\overline M);\mathcal S^{m-1})$, $p|_{\p N}=0$, and $\lambda\in \mathcal L(N;\mathcal S^{m-2})$, if $m\geq 2$.
    \end{itemize} 
    Here $\bar d^s$ is the symmetric differentiation w.r.t. the Lorentzian metric $\bar g$, i.e., $\bar d^s=d^s+\p_t(\cdot)\,dt$.
\end{theorem}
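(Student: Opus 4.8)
The plan is to mimic the proof of Theorem~\ref{injectivity of weighted light ray}: take the Fourier transform in the time variable $t$, which turns $L_m$ into the attenuated geodesic X-ray transform $I_\eta$ on $M$, and then invoke Hypothesis~\ref{hypo 1}. The essential new difficulty is that $f$ is not compactly supported in time, so $\hat f(\eta,\cdot)$ need not be analytic in $\eta$; hence, unlike in Theorem~\ref{injectivity of weighted light ray}, we cannot reduce to the single frequency $\eta=0$ and must instead work at all frequencies $\eta\in\R$ and reassemble. The case $m=0$ is immediate: $\widehat{L_0 f}(\eta,x,v)=I_\eta(\hat f(\eta,\cdot))(x,v)=0$ for a.e.\ $(x,v)\in\p_-SM$, and s-injectivity for degree $0$ forces $\hat f(\eta,\cdot)\equiv 0$ for every $\eta$, so $f=0$.

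For $m\geq 1$, write $\dot{\tilde\gamma}_{t,x,v}(s)=\p_t+\dot\gamma_{x,v}(s)$, a future null vector for $\bar g$. Contracting $f$ with $\dot{\tilde\gamma}^{\otimes m}$ produces a polynomial of degree $\leq m$ in the spatial velocity; collecting its homogeneous pieces gives, for each $\eta$, an element $F_\eta\in\bigoplus_{k=0}^m C(\overline M;S^k)$ built linearly and boundedly from $\hat f(\eta,\cdot)$ (here $\hat f(\eta,\cdot)\in C(\overline M;\mathcal S^m)$ since $f\in L^1(\R;C(\overline M);\mathcal S^m)$). A change of variable $t'=t+s$ and Fubini, exactly as in Theorem~\ref{injectivity of weighted light ray}, give
\begin{align*}
\widehat{L_m f}(\eta,x,v)=\int_0^{\tau(x,v)} e^{i\eta s}\,F_\eta(\gamma_{x,v}(s),\dot\gamma_{x,v}(s))\,ds=I_\eta F_\eta(x,v).
\end{align*}
Thus $L_m f=0$ yields $I_\eta F_\eta|_{\p_-SM}=0$ for every $\eta\in\R$, and Hypothesis~\ref{hypo 1} provides $p_\eta\in\bigoplus_{k=0}^{m-1} C^1(\overline M;S^k)$ with $p_\eta|_{\p M}=0$ and $F_\eta=d^s p_\eta+i\eta p_\eta$.

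It remains to assemble the $p_\eta$ into a tensor field on $N$. First one checks that the gauge $p_\eta$ is unique: if $d^s p+i\eta p=0$ with $p|_{\p M}=0$ then $p=0$, which follows from the overdetermined ellipticity of $d^s$ together with unique continuation from the boundary (for $\eta=0$ this is the standard vanishing of boundary-trivial Killing tensors; for $\eta\neq 0$ the attenuation term is lower order). This uniqueness, combined with an associated stability estimate, shows that $\eta\mapsto p_\eta$ is measurable and that $\int_\R (1+\eta^2)\|p_\eta\|^2_{H^1(M)}\,d\eta<\infty$, using $f\in\mathcal L(N;\mathcal S^m)\subset L^2(N;\mathcal S^m)$ to control $F_\eta$ in $\eta$; the $L^1(\R;C(\overline M))$ hypothesis further upgrades the regularity so that $p:=\widehat{\phantom{x}}^{\,-1}_\eta p_\eta\in L^\infty(N;\mathcal S^{m-1})\cap H^1(\R;C^1(\overline M);\mathcal S^{m-1})$ with $p|_{\p N}=0$. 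Taking the inverse Fourier transform of the identity $F_\eta=d^s p_\eta+i\eta p_\eta$ (so that $i\eta\mapsto\p_t$) shows that $f$ and $\bar d^s p=d^s p+\p_t p\,dt$ have the same contraction with every null vector $\p_t+w$, $|w|_g=1$, hence with every null vector by homogeneity. Finally, a symmetric $m$-tensor on the Lorentzian manifold $(N,\bar g)$ whose contraction with every null vector vanishes is divisible by $\bar g$: for $n\geq 2$ the real null cone of $\bar g$ is Zariski dense in the complex quadric $\{\bar g=0\}$, so this follows from the Nullstellensatz, and for $m\leq 1$ it forces the tensor to vanish. Applying this to $f-\bar d^s p$ gives $f-\bar d^s p=\lambda\,\bar g$ with $\lambda\in\mathcal L(N;\mathcal S^{m-2})$ (division by $\bar g$ being a bounded algebraic operation, pointwise in $(t,x)$), which is the claim; when $m=1$ there is no room for a $\bar g$ term and we get $f=\bar d^s p$.

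The main obstacle is the third step: controlling the gauge $p_\eta$ integrably in $\eta$ and justifying the inverse Fourier transform. In the compactly supported setting of Theorem~\ref{injectivity of weighted light ray} this is bypassed entirely by analyticity, which reduces everything to $\eta=0$; here one genuinely needs the uniqueness of the gauge, a frequency-uniform stability estimate for the reconstruction of $p_\eta$ from $F_\eta$, and enough decay (from both the $L^\infty$ and the $L^1(\R;C(\overline M))$ hypotheses) to land $p$ in the stated function space. The remaining ingredients — the Fourier/Fubini manipulations, the algebraic divisibility by $\bar g$, and the bookkeeping of the combinatorial constants in the velocity-polynomial decomposition — are routine.
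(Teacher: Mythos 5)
Your overall skeleton --- Fourier transform in $t$, identification of $\widehat{L_m f}(\eta,\cdot)$ with the attenuated transform $I_\eta$ applied to $\hat f(\eta,\cdot)$, invoking Hypothesis \ref{hypo 1} at each frequency, and inverse-transforming --- is exactly the paper's, and your $m=0$ case and the concluding ``divisible by $\bar g$'' step are fine. The gap sits precisely at the point you yourself flag as the main obstacle: you never actually produce the frequency-uniform control on the gauge $p_\eta$. Hypothesis \ref{hypo 1} asserts only s-injectivity, not stability, and a stability constant for $I_\eta$ that is uniform (or even polynomially bounded) in $\eta$ is not available from the assumptions; for large $\eta$ the transform is highly oscillatory and such a bound would be a substantial theorem in its own right. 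Moreover, the gauge uniqueness your argument leans on fails in the graded space $\bigoplus_{k=0}^{m-1}C^1(\overline M;S^k)$: a sum of components of different ranks can restrict to zero on the unit sphere bundle without each component vanishing (e.g.\ $g-1$), so $d^s p+i\eta p=0$ with $p|_{\p M}=0$ only forces the restriction of $p$ to $SM$ to vanish along geodesics, not $p$ itself. Thus neither half of ``uniqueness plus stability'' is in place, and the inverse Fourier transform of $\eta\mapsto p_\eta$ is not justified.

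The paper's way around this is the step you skipped: Lemma \ref{tensor decomposition} first normalizes $f$, modulo a harmless $\lambda\,\bar g$ term, to the three-slot form $p+q\,dt+r(dt)^2$ with $p,q,r$ spatial tensors of ranks $m$, $m-1$, $m-2$. Writing the gauge as $u_\eta=w_\eta+v_\eta$ by rank and matching tensor degrees in $\hat f_\eta=d^s u_\eta+i\eta u_\eta$ then yields the decoupled identities $\hat p_\eta=d^s w_\eta$, $\hat q_\eta=d^s v_\eta+i\eta w_\eta$, $\hat r_\eta=i\eta v_\eta$, from which $v_\eta$ and $w_\eta$ are recovered \emph{algebraically} by dividing by $i\eta$; no stability estimate for $I_\eta$ is needed, and measurability together with $L^2$-in-$\eta$ bounds on $w_\eta$, $\eta w_\eta$, $v_\eta$, $\eta v_\eta$ follow directly from the corresponding bounds on $\hat f_\eta$. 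That degree-balancing device is the essential missing idea; with it in hand, the remainder of your argument (inverse Fourier transform, $i\eta\mapsto\p_t$, and absorbing the leftover trace part into $\lambda\,\bar g$) goes through as you describe.
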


\begin{proof}
By Lemma \ref{tensor decomposition} and the fact that $L_m(\lambda \bar g)\equiv 0$, we may assume that 
$$f=p+r\, g+q \,dt$$
for some $p\in \mathcal D(M;S^m)$, $q\in \mathcal D(M;S^{m-1})$ and $r\in \mathcal D(M;S^{m-2})$.
In the meantime, $f$ can be rewritten as
\begin{align}\label{tensor f}
f=p+q\,dt+r(dt)^2+r\,\bar g,
\end{align}
so we only consider $f$ of the form 
$$p+q\,dt+r(dt)^2.$$

      Following the calculation in the proof of Theorem \ref{injectivity of weighted light ray}, since $f\in \mathcal L(N;\mathcal S^m)$, by the Fubini's theorem
\begin{equation}\label{Fourier tensor light ray}
0=\widehat{L_m f}(\eta,x,v)=\int_0^{\tau(x,v)} e^{i\eta s}\hat f(\eta, \gamma_{x,v}(s),\dot\gamma_{x,v}(s))\,ds.
\end{equation}
Note that $\hat f\in L^2(N;\mathcal S^m)\cap C(\overline N;\mathcal S^m)$.
We denote $\hat f(\eta,\cdot)$ by $\hat f_\eta$, for each fixed $\eta\in\mathbb R$, we can view 
$$\hat f_\eta=\hat p_\eta+\hat q_\eta+\hat r_\eta\in C(\overline M;S^m)\oplus C(\overline M;S^{m-1})\oplus C(\overline M;S^{m-2}).$$
The right hand side of \eqref{Fourier tensor light ray} corresponds to the attenuated X-ray transform $I_\eta \hat f_\eta$.

Now by the Hypothesis \ref{hypo 1}, there exists $u_\eta\in \bigoplus_{k=0}^{m-1} C^1(\overline M;S^k)$ such that 
$$\hat p_\eta+\hat q_\eta+\hat r_\eta=d^s u_\eta+i\eta u_\eta$$
and $u_\eta|_{\p M}=0$.
Balancing both sides, we conclude that for $\eta\neq 0$, $u_\eta=w_\eta+v_\eta$ with $w_\eta\in C^1(\overline M;S^{m-1})$ and $v_\eta\in C^1(\overline M;S^{m-2})$, i.e.
\begin{equation}\label{Fourier equations}
\hat p_\eta=d^s w_\eta,\quad \hat q_\eta=d^s v_\eta+i\eta w_\eta,\quad \hat r_\eta=i\eta v_\eta.  
\end{equation}
Since $\hat p_\eta$, $\hat q_\eta$ and $\hat r_\eta$ are all $L^2$ in $\eta$,  
we can derive the regularity of $w_\eta$ and $v_\eta$ in $\eta$ from \eqref{Fourier equations} (note that we may freely assign values for $w_\eta$ and $v_\eta$ at $\eta=0$). In particular $w_\eta$, $\eta w_\eta$, $v_\eta$ and $\eta v_\eta$ are all $L^2$ in $\eta$. We denote the inverse Fourier transform of a function $g$ by $\check{g}$. It follows that
\begin{equation}\label{inverse Fourier equations}
    p=d^s \check w,\quad q=d^s \check v+\p_t \check w,\quad r=\p_t \check v,
\end{equation}
for some $\check w\in H^1(\mathbb R;C^1(\overline M;S^{m-1}))$ and $\check v\in H^1(\mathbb R;C^1(\overline M; S^{m-2}))$ with $\check w|_{\p N}=0$, $\check v|_{\p N}=0$. Moreover, the regularity of $p, q$ and $r$, together with \eqref{inverse Fourier equations}, implies that both $\check w$ and $\check v$ are $L^\infty$ in time $t$. 
Therefore by \eqref{tensor f}, with $\bar d^s=d^s+\p_t(\cdot)\,dt$,
\begin{align*}
    f & =p+q\,dt+r(dt)^2+r\,\bar g\\
      & =\bar d^s (\check w+\check v\, dt)+r\,\bar g,
\end{align*} 
where $\check w+\check v\, dt\in L^\infty(N;\mathcal S^{m-1})\cap H^1(\mathbb R;C^1(\overline M);\mathcal S^{m-1})$ vanishing on $\p N$.
\end{proof}

One can also consider the injectivity of the weighted light ray transform $L_W S$, defined in \eqref{def:weighted light ray}, for $S\in L^\infty(\mathbb R\times M)\times L^1(\mathbb R; C(\overline M))$.

\begin{theorem}\label{injectivity of infinite weighted light ray transform}
Let $W\in L^\infty(SM)$ be nonvanishing, suppose that the weighted geodesic X-ray transform of $f\in C(\overline M)$, with the weight $e^{i\eta s}W(\gamma_{x,v}(s),\dot\gamma_{x,v}(s))$, is injective for all $\eta\in\mathbb R$. Assume that $S\in L^\infty(\mathbb R\times M) \cap L^1(\mathbb R;C(\overline M))$. 
If $L_W S=0$ for a.e. $(t,x,v)\in\R\times\p_-SM$, then $S=0$. 
\end{theorem}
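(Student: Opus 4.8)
The plan is to follow the scheme of the proof of Theorem~\ref{injectivity of weighted light ray}, but since $S$ need not be compactly supported in time, the analyticity of the time Fourier transform is no longer available; instead one exploits the frequency-by-frequency injectivity built into the hypothesis. First I would note that for every $(x,v)\in \p_-SM$ the function $t\mapsto L_WS(t,x,v)$ belongs to $L^1(\R)$. Indeed, by Tonelli's theorem and the non-trapping bound $\tau(x,v)<\infty$,
$$
\int_\R\int_0^{\tau(x,v)}|S(t+s,\gamma_{x,v}(s))|\,|W(\gamma_{x,v}(s),\dot{\gamma}_{x,v}(s))|\,ds\,dt\leq \|W\|_{L^\infty(SM)}\,\tau(x,v)\,\|S\|_{L^1(\R;C(\overline M))}<\infty.
$$
Consequently the $t$-Fourier transform $\widehat{L_WS}(\eta,x,v)$ is a well-defined (and continuous) function of $\eta$ for each such $(x,v)$, and since $L_WS=0$ a.e. on $\R\times\p_-SM$ it vanishes identically in $\eta$.

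Next, using the absolute integrability above to apply Fubini's theorem, together with the change of variables $t'=t+s$ exactly as in the proof of Theorem~\ref{injectivity of weighted light ray}, I obtain for a.e. $(x,v)\in\p_-SM$ and every $\eta\in\R$
$$
0=\widehat{L_WS}(\eta,x,v)=\int_0^{\tau(x,v)}e^{i\eta s}\,\hat S(\eta,\gamma_{x,v}(s))\,W(\gamma_{x,v}(s),\dot{\gamma}_{x,v}(s))\,ds.
$$
Since $S\in L^1(\R;C(\overline M))$, the partial Fourier transform $\hat S(\eta,\cdot)$ lies in $C(\overline M)$ for every $\eta\in\R$, and the right-hand side is precisely the weighted geodesic X-ray transform of $\hat S(\eta,\cdot)$ with weight $e^{i\eta s}W(\gamma_{x,v}(s),\dot{\gamma}_{x,v}(s))$. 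Invoking the hypothesis that this weighted X-ray transform is injective on $C(\overline M)$ for each $\eta$, we conclude $\hat S(\eta,\cdot)\equiv 0$ for all $\eta\in\R$, hence $S=0$ in $\R\times M$.

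The argument is short, and I do not anticipate a serious obstacle; the only points requiring care are the measure-theoretic bookkeeping — verifying $L_WS(\cdot,x,v)\in L^1(\R)$ so that the pointwise-in-$(x,v)$ Fourier transform is meaningful and commutes with the $s$-integration — and the conceptual observation that, unlike in Theorem~\ref{injectivity of weighted light ray}, one cannot reduce to a single frequency by analyticity, so injectivity of the weighted X-ray transform at \emph{every} $\eta\in\R$ is genuinely needed. I would also remark that this is the same mechanism underlying Theorem~\ref{infinite tensor light ray}, with Hypothesis~\ref{hypo 1} playing the role of the scalar injectivity assumption.
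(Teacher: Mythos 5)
Your proof is correct and follows essentially the same route as the paper: apply Fubini and the change of variables $t'=t+s$ to identify $\widehat{L_WS}(\eta,\cdot,\cdot)$ with the weighted geodesic X-ray transform of $\hat S(\eta,\cdot)$ with weight $e^{i\eta s}W$, then invoke the frequency-by-frequency injectivity hypothesis to get $\hat S(\eta,\cdot)=0$ for all $\eta$, hence $S=0$. The extra integrability bookkeeping and the remark contrasting this with the analyticity argument of the compactly supported case are accurate but not needed beyond what the paper records.
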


\begin{proof}
    Similar to Theorem \ref{infinite tensor light ray}, by the Fubini's theorem
    $$0=\widehat{L_W S}(\eta,x,v)=\int_0^{\tau(x,v)} e^{i\eta s}W(\gamma_{x,v}(s),\dot{\gamma}_{x,v}(s))\hat S(\eta,\gamma_{x,v}(s))\,ds.$$
    The right hand side is the weighted X-ray transform of $\hat S(\eta,\cdot)$. Now by the assumption, we get that $\hat S(\eta,x)=0$ for all $\eta\in \mathbb R$ and $x\in \overline M$, therefore $S=0$.
\end{proof}

\section*{Acknowledgements}
 Ru-Yu Lai is partially supported by the National Science Foundation (NSF) through grants DMS-2006731 and DMS-2306221. Hanming Zhou is partly supported by the NSF grants DMS-2109116 and DMS-2408369, and Simons Foundation Travel Support for Mathematicians.

\bibliographystyle{abbrv}

\bibliography{1Ntransport}

\end{document}